\documentclass[%
 aip,
 cha,
 amsmath,amssymb,
 preprint,
]{revtex4-1}

\usepackage{graphicx}
\usepackage{dcolumn}
\usepackage{bm}
\usepackage{amsthm}
\usepackage{booktabs}
\usepackage{subcaption}
\usepackage{xcolor}
\usepackage{dsfont}
\usepackage[normalem]{ulem}

\usepackage[utf8]{inputenc}
\usepackage[T1]{fontenc}
\usepackage{mathptmx}
\usepackage{etoolbox}
\usepackage[table]{xcolor}

\makeatletter
\def\@email#1#2{%
 \endgroup
 \patchcmd{\titleblock@produce}
  {\frontmatter@RRAPformat}
  {\frontmatter@RRAPformat{\produce@RRAP{*#1\href{mailto:#2}{#2}}}\frontmatter@RRAPformat}
  {}{}
}%
\makeatother

\newcommand{\DD}[2]{\frac{d #1}{d #2}}
\newcommand{\dd}[2]{\frac{\partial #1}{\partial #2}}
\newcommand{\norm}[1]{\left\lVert#1\right\rVert}

\makeatletter
\newcommand*{\addFileDependency}[1]{
  \typeout{(#1)}
  \@addtofilelist{#1}
  \IfFileExists{#1}{}{\typeout{No file #1.}}
}
\makeatother

\newcommand{\snor}[1]{\left| #1 \right|} 
\newcommand{\LRp}[1]{\left( #1 \right)} 
\newcommand{\LRs}[1]{\left[ #1 \right]} 
\newcommand{\LRc}[1]{\left\{ #1 \right\}} 

\newcommand{\mc}[1]{\mathcal{#1}} 
\newcommand{\half}{\frac{1}{2}}

\newcommand{\bs}[1]{\boldsymbol{#1}}

\newcommand{\R}{{\bs{\mathbb{R}}}}

\newcommand{\pb}{\bs{u}}
\newcommand{\ub}{\pb}

\newcommand{\figlab}[1]{\label{fig:#1}}
\newcommand{\eqnlab}[1]{\label{eq:#1}}
\newcommand{\theolab}[1]{\label{theo:#1}}

\newcommand{\tablab}[1]{\label{tab:#1}}

\newcommand{\figref}[1]{\ref{fig:#1}}
\newcommand{\theoref}[1]{\ref{theo:#1}}

\newcommand{\eqnref}[1]{\eqref{eq:#1}}

\newcommand{\seclab}[1]{\label{sect:#1}}
\newcommand{\secref}[1]{\ref{sect:#1}}
\newcommand{\tabref}[1]{\ref{tab:#1}}

\newcommand{\eval}[2][\right]{\relax \ifx#1\right\relax \left.\fi#2#1\rvert}

\renewcommand{\epsilon}{\varepsilon}
\newcommand{\vb}{\bs{v}}

\newcommand{\epsb}{\bs{\epsilon}}

\newcommand{\fb}{\bs{f}}

\newcommand{\ubhat}{\widehat{\ub}}

\newcommand{\fbhat}{\widehat{\fb}}
\newcommand{\fhat}{\widehat{f}}

\newtheorem{theorem}{Theorem}
\newtheorem{remark}{Remark}

\definecolor{heat}{RGB}{180,215,240}
\definecolor{timeheat}{RGB}{205,205,205}
\definecolor{lossblue}{RGB}{55,126,184}

\newcommand{\E}{\mathbb{E}}
\newcommand{\vbhat}{{\widehat{\vb}}}

\newcommand{\Jb}{{\bf{J}}}

\newcommand{\Jbhat}{\widehat{\bf{J}}}

\newcommand{\Hcal}{\mathcal{H}}
\newcommand{\Hcalhat}{\widehat{\mathcal{H}}}
\newcommand{\Frob}{\mathrm{F}}
\newcommand{\veps}{\bs{\epsilon}}
\newcommand{\eps}{{\epsilon}}
\newcommand{\Lcurv}{{ L_{\mathrm{curv}}}}

\begin{document}


\title[Second-order consistency for learning chaotic dynamics]{Second-order consistency for learning chaotic dynamics via randomized Jacobian matching}

\author{Shinhoo Kang}
 \email{shinkang@korea.ac.kr}
\affiliation{Department of Computer Science and Software Engineering, Korea University, Sejong 30019, Republic of Korea}

\author{Hai V. Nguyen}
\affiliation{Department of Aerospace Engineering and Engineering Mechanics, The Oden Institute for Computational Engineering and Sciences, The University of Texas at Austin, Austin, TX, USA}

\author{Tan Bui-Thanh}
\affiliation{Department of Aerospace Engineering and Engineering Mechanics, The Oden Institute for Computational Engineering and Sciences, The University of Texas at Austin, Austin, TX, USA}

\date{\today}

\begin{abstract}
Short-horizon accuracy does not ensure that a learned chaotic system has correct long-time dynamics. Trajectory (zeroth-order) matching constrains vector-field values, and Jacobian (first-order) matching constrains local tangent dynamics, but neither determines how the Jacobian varies away from supervised states. A model may be locally accurate while drifting toward spurious attractors and distorting long-time statistics. We show that second-order supervision mitigates these failures. Because forming full Hessian tensors is computationally prohibitive in high dimensions, we propose model-constrained randomized Jacobian matching, which compares the Jacobians of the true and learned vector fields at randomly perturbed inputs. A Taylor expansion shows that the expected randomized Jacobian loss decomposes into the Jacobian mismatch plus a Hessian mismatch scaled by the noise variance, implicitly enforcing second-order consistency at $\mathcal{O}(d^2)$ memory cost without forming the $\mathcal{O}(d^3)$ Hessian tensor. In Lorenz~63 with minimal temporal supervision, second-order supervision reduces invariant-measure error and Lyapunov-spectrum MSE in both paired comparisons and recovers the constant Hessian norm of the true bilinear field. Across five training seeds, explicit Hessian matching produces catastrophic Lyapunov outliers for four of five seeds, whereas randomized Jacobian matching produces none among 5,000 on-attractor rollouts. It also has the largest threshold in a directional capture scan, possibly reflecting the neighborhood coverage of its perturbed inputs. In coupled Lorenz~96, first-order methods enter spurious high-amplitude regimes as forcing increases, while second-order methods retain accurate marginals. Randomized Jacobian matching costs about the same as explicit Hessian matching on Lorenz~63 and 40\% less on Lorenz~96, with no reference Hessian evaluations during training.
\end{abstract}


\maketitle

\begin{quotation}
Learned surrogate models can match short trajectories yet converge to the wrong attractor over long integrations. Trajectory and Jacobian matching constrain the vector field and its local tangent dynamics but leave its curvature underconstrained. We introduce randomized Jacobian matching, which provides implicit second-order supervision using Jacobian evaluations at perturbed states. On Lorenz~63 and coupled Lorenz~96, the method recovers long-time statistics and derivative accuracy where lower-order methods fail. Across five training seeds on Lorenz~63, it is the only method with no observed catastrophic Lyapunov outlier. The comparison with explicit Hessian matching suggests an additional benefit from supervising a neighborhood of the training data.
\end{quotation}

\section{Introduction}
\seclab{intro}

Learning dynamical systems from time-series data has a long history in science and engineering~\citep{crutchfield1987equations,kantz2003nonlinear,brunton2016discovering,chen2018neural,karniadakis2021physics}.
Most existing methods train by minimizing discrepancies between observed and predicted trajectories~\citep{chen2018neural}, or by enforcing governing equations through residual minimization as in Physics-Informed Neural Networks (PINN)~\citep{karniadakis2021physics,raissi2019physics}.
Although these methods often achieve accurate short-term prediction, they do not explicitly constrain the long-time qualitative behavior of the learned dynamics~\citep{vlachas_data-driven_2018,park2024dynamical}.
In the context of dynamical systems, derivative-informed approaches, such as Jacobian matching~\citep{park2024dynamical,tian_jacobian-enforced_2024}, combine trajectory supervision with constraints on the local tangent sensitivities of the learned dynamics. Related ideas appear in gradient-enhanced PINN training~\citep{mohammadian2023gradient} and in knowledge distillation via input-Jacobian matching~\citep{zagoruyko2016paying,srinivas2018knowledge}.
\citet{park2024dynamical} found that Jacobian matching can recover the Lorenz~63 attractor, its invariant measure, and the associated Lyapunov spectrum. \citet{tian_jacobian-enforced_2024} also showed that enforcing Jacobian consistency during training improves the accuracy of the tangent linear and adjoint models of a neural-network-emulated Lorenz~96 system, making it suitable for data assimilation.

Trajectory and Jacobian matching share a structural limitation. Trajectory supervision constrains the value of the learned vector field, while Jacobian supervision constrains its local tangent behavior, but neither constrains how the Jacobian varies away from the supervised states. A learned model can be locally accurate at those states while still having incorrect second-order structure between them. In Lorenz 63, this discrepancy is particularly easy to diagnose because the true Hessian is spatially constant. This motivates enforcing second-order consistency in addition to first-order matching.

The most direct way to enforce curvature consistency is explicit Hessian matching, which penalizes discrepancies in second-order derivatives. In practice, however, this approach is often computationally prohibitive; explicit construction of the full Hessian tensor requires $\mathcal{O}(d^3)$ memory and at least $\mc{O}(d^2)$ automatic-differentiation (AD) passes through the vector field for a $d$-dimensional state space. We therefore seek a way to enforce second-order consistency without explicit Hessian construction.

The model-constrained (MC) framework in \cite{van2025model,nguyen2022model} motivates our approach. By evaluating the training loss at randomly perturbed inputs, the MC approach promotes Jacobian consistency without requiring derivative computations: a Taylor expansion shows that the expected squared error between true and learned trajectories at perturbed inputs penalizes the Jacobian mismatch to leading order.

We consider differentiable surrogate modeling of a known reference model, rather than equation discovery from trajectory observations alone. During offline training, we assume that the reference model can be initialized at perturbed states and queried for trajectories and vector-field Jacobians, or equivalent Jacobian actions. This access model arises when a high-fidelity simulator is available offline but too expensive for repeated use in ensemble forecasting, uncertainty quantification, parameter estimation, or control~\cite{benner2015survey}. Examples include numerical weather and ocean models equipped with tangent-linear or adjoint codes~\cite{errico1997adjoint,heimbach2005efficient}, ODE and PDE simulators implemented with AD, and closure or universal differential equation (UDE)~\cite{rackauckas2020universal} models in which only part of the dynamics is learned.

We propose model-constrained randomized Jacobian matching, which matches Jacobians at perturbed states and thereby induces implicit second-order supervision. Section~\secref{hessian-matching} shows that the expected randomized Jacobian loss contains a leading-order Hessian-mismatch term, together with a third-derivative cross term and a higher-order remainder. When the nominal Jacobian mismatch is controlled and the perturbation scale is sufficiently small, minimizing this loss promotes Hessian consistency without requiring the Hessian tensor to be formed. This makes the approach practical in high-dimensional settings where explicit Hessian matching is infeasible. Because the Jacobians are evaluated at perturbed states, the loss also extends derivative supervision from sampled states to a finite neighborhood of the training data.

We test our framework on two chaotic systems: the Lorenz~63 system (low-dimensional chaos with a single positive Lyapunov exponent) and the coupled Lorenz~96 system. The coupled Lorenz~96 system produces high-dimensional chaos in a $396$-dimensional multiscale slow--fast state space with multiple positive Lyapunov exponents. We focus learning on the quadratic advection term of the $36$ slow variables, using a structured architecture that encodes spatial locality and translation equivariance.

The main contributions of this paper are summarized as follows:
\begin{itemize}
    \item
We identify a structural limitation of trajectory and Jacobian matching: neither constrains the curvature of the vector field. This missing second-order constraint can lead to incorrect long-time dynamics even when the learned model reproduces local first-order behavior.
    \item
    We propose model-constrained randomized Jacobian matching, which provides implicit second-order supervision using only Jacobian evaluations. A Taylor expansion shows that its expected loss contains a leading-order Hessian-mismatch term. Each perturbation requires $\mathcal{O}(d)$ AD passes and $\mathcal{O}(d^2)$ memory without explicit Hessian construction.
    \item
On Lorenz~63, we show that randomized Jacobian matching improves long-time statistics and curvature recovery under minimal temporal supervision. Multi-seed and directional capture analyses characterize catastrophic Lyapunov outliers and suggest that neighborhood supervision provides robustness beyond curvature matching at sampled states.
    \item
For the coupled Lorenz~96 system, we demonstrate that randomized Jacobian matching extends second-order supervision without reference Hessians to a $396$-dimensional multiscale UDE setting and improves long-time fidelity under out-of-distribution forcing.
\end{itemize}

The remainder of the paper is organized as follows. Section~\secref{modelproblems} introduces the two benchmark systems, Lorenz~63 and coupled Lorenz~96. Section~\secref{hessian-matching} develops the randomized Jacobian matching framework and derives the leading-order Hessian-mismatch term in the expected loss. Section~\secref{numerical-results} reports the numerical experiments and compares the five supervision methods on both systems. Section~\secref{conclusion} concludes.

\section{Model Problems}
\seclab{modelproblems}
We evaluate our approach on two canonical chaotic dynamical systems that span low- and high-dimensional regimes: the Lorenz~63 system \citep{lorenz1963deterministic} and the coupled Lorenz~96 system \citep{lorenz1996predictability}.


\subsection{Lorenz~63 Model}

The Lorenz~63 model is a simplified dynamical system originally introduced to model atmospheric convection.
The system is governed by 
\begin{subequations}
\eqnlab{lorenz63}
\begin{align}
    \DD{x}{t} &= \sigma\LRp{y-x}, \\
    \DD{y}{t} &= x\LRp{\rho-z}-y, \\
    \DD{z}{t} &= xy - \beta z,
\end{align}
\end{subequations}
where $x$, $y$, and $z$ denote the state variables. 

Under the standard parameter set $\sigma=10$, $\rho=28$, and $\beta=8/3$, the Lorenz~63 system is chaotic and dissipative. Although the system is highly sensitive to initial conditions, making accurate long-term trajectory prediction impractical, its ergodicity ensures that time averages along a trajectory converge to statistical averages over the attractor \citep{eckmann1985ergodic}. 
Using this property, we evaluate the learned models with the invariant measure, the Wasserstein-1 distance~\citep{ruschendorf1985wasserstein,villani2009optimal}, and the Lyapunov spectrum ($\lambda_1>0$, $\lambda_2=0$, $\lambda_3<0$). The Lyapunov spectrum characterizes the tangent-space dynamics averaged over the attractor: $\lambda_1$ governs exponential sensitivity to initial conditions, while $\lambda_3$ reflects dissipation and contraction toward the attractor.

\subsection{Coupled Lorenz 96 Model}

The two-scale coupled Lorenz~96 model represents multiscale atmospheric dynamics, with nonlinear advection, dissipation, scale coupling, and external forcing \citep{lorenz1996predictability}. The model consists of two interacting physical scales: the slow variables $X_k$ ($k=1,2,\cdots,K$), representing large-scale atmospheric waves, and the fast variables $Y_{j,k}$ ($j=1,2,\cdots,J$), modeling rapidly varying small-scale dynamics such as convection. The system contains $N_{\mathrm{dof}}=K(1+J)$ degrees of freedom and is governed by the coupled equations:
\begin{subequations}
  \eqnlab{lorenz96}
\begin{align}
  \eqnlab{lorenz96-slow}
  \DD{X_k}{t} & = \underbrace{
  - X_{k-1} \LRp{X_{k-2}-X_{k+1} }
  }_{\text{nonlinear advection}}
  - X_k + F - h \overline{Y}_k,\\
  \eqnlab{lorenz96-fast}
  \frac{1}{c}\DD{Y_{j,k}}{t} &= - J Y_{j+1,k} \LRp{Y_{j+2,k} - Y_{j-1,k} } - Y_{j,k} + \frac{h}{J} X_k,
\end{align}
\end{subequations}
where $- X_{k-1} \LRp{X_{k-2}-X_{k+1} }$ is the nonlinear advection, $-X_k$ provides linear dissipation, 
$F$ is the external forcing parameter, and  
$-h\overline{Y}_k$ couples the slow variables to the mean of the fast variables, $\overline{Y}_k :=\frac{1}{J}\sum_{j=1}^J Y_{j,k}$. The parameter $c$ controls the time-scale separation, $J$ scales the amplitude of the fast variables relative to the slow variables, and $h$ determines the coupling strength between the two scales \citep{carlu2019lyapunov}.
Following \citet{lorenz1996predictability}, we adopt $h=1$, $K=36$, and $c=J=10$, yielding $N_{\text{dof}}=396$. Under this configuration, the fast variables evolve approximately ten times faster and with smaller characteristic amplitude than the slow variables.

Lorenz~63 is a prototype of low-dimensional chaos with a single positive Lyapunov exponent. Coupled Lorenz~96, by contrast, has high-dimensional chaos with multiple positive Lyapunov exponents. This multiscale interaction structure resembles key features of geophysical turbulence. The system is therefore a more challenging benchmark for evaluating long-term statistical fidelity, attractor preservation, and out-of-distribution generalization.



\section{Model-Constrained Randomized Jacobian Matching}
\seclab{hessian-matching}

The dynamical systems introduced in Section~\secref{modelproblems} can be written in the compact form
\begin{align}
    \eqnlab{ode}
    \DD{\ub}{t} = \fb (\ub),
\end{align}
where $\ub \in \R^d$ denotes the state vector and $\fb:\R^d \to \R^d$ is the corresponding vector field. For example, for the Lorenz~63 system in~\eqnref{lorenz63}, $\ub=(x,y,z)^T$ and $$\fb (\ub)=\LRp{\sigma(y-x),\; x(\rho-z)-y,\; xy - \beta z}^T.$$

\subsection{Neural ODE}
\seclab{node}

We use Neural Ordinary Differential Equations (NODEs)~\citep{chen2018neural} to approximate the continuous-time dynamics of~\eqnref{ode}. The true vector field $\fb$ is replaced by a neural network $\fbhat(\cdot;\theta):\R^d\to\R^d$, yielding
\begin{align}
    \eqnlab{node}
    \DD{\ub}{t} = \fbhat(\ub; \theta).
\end{align}
Given an initial condition $\ub_0$, the true system~\eqnref{ode} and the NODE~\eqnref{node} are numerically integrated to generate the reference trajectory $\{\ub_j\}_{j=1}^{m}$ and the predicted trajectory $\{\ubhat_j\}_{j=1}^{m}$, respectively.
Here, $j=1,\cdots,m$ denotes the discrete time index within the trajectory segment, with $u_j\approx u(t_0+j\Delta t)$, where $\Delta t$ is the integration time step. 

The network parameters $\theta$ are trained using batches of $n_b$ trajectory segments. Each segment $i$ is initialized by randomly selecting a training trajectory and a starting state $\ub_0^{(i)}$ along that trajectory, from which the NODE generates $m$ predictions $\ubhat_1^{(i)},\ldots,\ubhat_m^{(i)}$. The standard trajectory-matching loss is
\begin{equation}
\eqnlab{loss-node-data}
    \mc{L}_{data} = \frac{1}{n_b m} \sum_{i=1}^{n_b} \sum_{j=1}^{m} \norm{\ubhat_j^{(i)} -\ub_j^{(i)}}^2.
\end{equation}

Equation~\eqnref{loss-node-data} also defines a trajectory-only
baseline that is applicable when only trajectory observations are
available. The derivative-supervised methods considered here assume
additional offline access to a differentiable reference model that can
be initialized at queried states and evaluated along the resulting
trajectories. Standard Jacobian matching requires reference Jacobians
along nominal trajectories, while explicit Hessian matching additionally
requires reference Hessians. The proposed randomized Jacobian-matching
method requires reference Jacobians along trajectories initialized from
perturbed states, but never queries or forms reference Hessians.

\subsection{Jacobian matching and Hessian matching}

Throughout this paper, derivatives are taken with respect to the vector field $\fb$, which the NODE directly learns. We define the Jacobian matrices of the true and learned vector fields by
\begin{equation}
\Jb(\ub) := \dd{\fb}{\ub} (\ub) \in \R^{d\times d},
\qquad
\Jbhat(\ub) := \dd{\fbhat}{\ub} (\ub; \theta) \in \R^{d\times d},
\eqnlab{Jdef}
\end{equation}
and the corresponding Hessian tensors by
\begin{equation}
\Hcal_{ijk}(\ub) := \dd{^2 f_i}{u_j\,\partial u_k}(\ub),
\qquad
\Hcalhat_{ijk}(\ub) := \dd{^2 \fhat_i}{u_j\,\partial u_k}(\ub;\theta).
\eqnlab{Hdef}
\end{equation}
Here, $\Jb,\Jbhat\in\R^{d\times d}$ characterize the local linear sensitivity of the vector field, while $\Hcal,\Hcalhat\in\R^{d\times d\times d}$ characterize its local curvature.


The Jacobian-matching loss penalizes discrepancies between the true and learned Jacobians evaluated along observed trajectories:
\begin{equation}
\eqnlab{loss-node-jac}
    \mc{L}_{jac} = \frac{1}{n_b (m+1)} \sum_{i=1}^{n_b} \sum_{j=0}^{m} \norm{\Jbhat (\ub_j^{(i)}) - \Jb  (\ub_j^{(i)}) }^2_\Frob.
\end{equation}
Analogously, explicit Hessian matching penalizes discrepancies between the true and learned Hessian tensors:
\begin{equation}
\eqnlab{loss-node-hes}
    \mc{L}_{hes} = \frac{1}{n_b (m+1)} \sum_{i=1}^{n_b} \sum_{j=0}^{m} \norm{\Hcalhat (\ub_j^{(i)}) - \Hcal (\ub_j^{(i)})}_\Frob^2, 
\end{equation}
where $\|\Hcalhat-\Hcal\|_\Frob^2 := \sum_{i,j,k}\bigl(\Hcalhat_{ijk}-\Hcal_{ijk}\bigr)^2$.
Although $\mc{L}_{hes}$ provides direct second-order supervision, explicit construction of the full Hessian tensor requires $\mc{O}(d^3)$ memory and $\mc{O}(d^2)$ AD passes. 
Even when the reference simulator is differentiable and provides Jacobian information through tangent-linear, adjoint, or AD operations, constructing and storing the full Hessian tensor may be computationally impractical. We therefore develop an implicit second-order approach that requires reference Jacobians but does not form reference Hessians.

\subsection{Model-Constrained approach: Implicit Derivative Supervision}
\seclab{mc-framework}

A NODE defines a dynamical system, not merely a sequence predictor. If the learned vector field deviates from the true dynamics, even small perturbations in initial conditions can amplify over time, leading to large prediction errors and qualitative failures of the long-term dynamics. To improve robustness, we adopt the MC framework, in which losses are evaluated at randomly perturbed states. This framework induces \emph{implicit} derivative supervision: trajectory matching at perturbed states penalizes first-order (Jacobian) mismatch, and Jacobian matching at perturbed states penalizes second-order (Hessian) mismatch.

\subsubsection{Randomized Trajectory Matching}

We perturb each initial condition by Gaussian noise:
\begin{equation}
\eqnlab{randu}
    \vb_0^{(i)} = \ub_0^{(i)} + \veps^{(i)}, \qquad \veps^{(i)} \sim \mc{N}\LRp{0, \eta^2 \mathbf{I}_d},
\end{equation}
where $\bs{\epsilon}^{(i)}$ is a Gaussian perturbation with zero mean and standard deviation (noise level) $\eta > 0$.
Here, $i$ denotes the same trajectory-segment index as in \eqnref{loss-node-data}, and each segment receives an independently sampled perturbation $\veps^{(i)}$. The perturbations are resampled at every gradient step. 
We use Gaussian perturbations throughout this work. However, Gaussianity is not essential to the leading-order analysis. What matters is that the perturbations have zero mean and are isotropic. Deterministic or quasi-Monte Carlo perturbations with these properties could also be considered.

Integrating both the true and learned dynamics from $\vb_0^{(i)}$ produces ground-truth and predicted trajectories $\bigl\{\vb_j^{(i)}\bigr\}_{j=1}^{m}$ and $\bigl\{\vbhat_j^{(i)}\bigr\}_{j=1}^{m}$.
The model-constraint loss is then
\begin{equation}
    \eqnlab{loss-node-mc}
    \mc{L}_{mc} = \frac{1}{n_b m} \sum_{i=1}^{n_b} \sum_{j=1}^{m} \norm{\vbhat_j^{(i)} - \vb_j^{(i)}}^2.
\end{equation}

Consider a single perturbed state $\ub + \veps$ and let $\vb_1=\Phi(\ub + \veps)$ and $\vbhat_1=\widehat{\Phi}(\ub + \veps)$ where $\Phi$ and $\widehat{\Phi}$ denote the true and learned one-step flow maps, respectively.
Expanding the one-step flow maps $\Phi$ and $\hat{\Phi}$ in Taylor series about $\ub$ gives
\begin{align}
v_{1,i}
& =
\Phi_i(\ub) 
+ \LRp{\dd{\Phi_i}{\ub}} \epsb 
+ \half \epsb^T \dd{^2 \Phi_i}{\ub^2} \epsb + \mc{O}(\norm{\epsb}^3),\\
\hat{v}_{1,i}
& =
\hat{\Phi}_i(\ub) 
+ \LRp{\dd{\hat{\Phi}_i}{\ub}} \epsb 
+ \half \epsb^T \dd{^2 \hat{\Phi}_i}{\ub^2}\, \epsb + \mc{O}(\norm{\epsb}^3),
\eqnlab{flowmaps}
\end{align}
for $i=1,\cdots, d$. Following~\cite{van2025model}, a Taylor expansion shows that, for small $\eta$ (small random perturbation $\epsb$ accordingly),
\begin{multline*}
    {\E}_{\epsb}\LRs{\norm{\vbhat_1 - \vb_1}^2} 
    = \underbrace{\norm{\widehat{\Phi}(\ub) - \Phi(\ub)}^2}_{\text{Trajectory mismatch}} 
+ \eta^2 \underbrace{
\norm{\dd{\widehat{\Phi}}{\ub}(\ub) - \dd{\Phi}{\ub}(\ub) }_\Frob^2 
}_{\text{Jacobian mismatch of the flow maps}} \\
+ 
\eta^2 \sum_{i=1}^d \LRp{\widehat{\Phi}_i(\ub) - \Phi_i(\ub) } 
\text{tr}\LRp{
\dd{^2 \hat{\Phi}_i}{\ub^2} - 
\dd{^2 \Phi_i}{\ub^2}}
+ \mc{O}(\eta^4).
\end{multline*}
The third term arises from second-order cross terms in the Taylor expansion and becomes negligible as training drives the trajectory mismatch to zero.
Thus $\mc{L}_{mc}$ implicitly penalizes the Jacobian mismatch of the flow maps in addition to the trajectory mismatch, without computing flow-map Jacobians.

Expanding $\ub(t+\Delta t)=\Phi(\ub(t))$ in powers of $\Delta t$ using $\dot{\ub}=\fb(\ub)$ and $\ddot{\ub}=\Jb(\ub)\fb(\ub)$ gives
\begin{align*}
\Phi(\ub) & =
\ub  
+ \Delta t \fb(\ub) 
+ \frac{(\Delta t)^2}{2} \Jb (\ub) \fb(\ub)
+ \mc{O}(\Delta t^3),\\
\hat{\Phi}(\ub) & =
\ub  
+ \Delta t \fbhat(\ub) 
+ \frac{(\Delta t)^2}{2} \Jbhat (\ub) \fbhat(\ub)
+ \mc{O}(\Delta t^3).
\end{align*}
Subtracting yields the flow-map discrepancy:
\begin{align*}
    \widehat{\Phi}(\ub) - \Phi(\ub) &=
    \Delta t \LRp{\fbhat(\ub) - \fb(\ub)} 
    + \frac{(\Delta t)^2}{2} \LRp{
    \Jbhat (\ub) \fbhat(\ub)
    - \Jb (\ub) \fb(\ub)
    }
    + \mc{O}(\Delta t^3).
\end{align*}
Taking derivatives with respect to $\ub$, we obtain 
\begin{align*}
    \dd{\widehat{\Phi}}{\ub}(\ub) - \dd{\Phi}{\ub}(\ub) &= \Delta t \LRp{\Jbhat(\ub) - \Jb(\ub)} + \mc{O}(\Delta t^2).
\end{align*}
Therefore, for small $\Delta t$, the flow-map Jacobian mismatch coincides with the vector-field Jacobian mismatch up to the factor $\Delta t$. Hence $\mathcal{L}_{mc}$ implicitly penalizes $\|\Jbhat(\ub) - \Jb(\ub)\|_F^2$ without Jacobian evaluations during training.
At the same time, the flow-map Jacobian term induced by the perturbation satisfies
\[
\eta^2
\left\|
\frac{\partial\widehat{\Phi}}{\partial u}
-
\frac{\partial\Phi}{\partial u}
\right\|_F^2
=
\eta^2\Delta t^2
\|\widehat{\mathbf J}-\mathbf J\|_F^2
+O(\eta^2\Delta t^3).
\]
Thus, reducing $\Delta t$ weakens the implicit vector-field Jacobian supervision per step.

\begin{table}[t]
\centering
\caption{Verification of the small-$\Delta t$ approximation for randomized
trajectory matching. The reported value is
$\Delta t^2\|\widehat{\mathbf J}-\mathbf J\|_F^2/
\|\partial_u\widehat{\Phi}-\partial_u\Phi\|_F^2$,
averaged over $20{,}000$ held-out on-attractor states for Lorenz~63 model.}
\label{tab:dt-expansion}
\begin{tabular}{l|cccc}
\hline
$\Delta t$ & $0.001$ & $0.01$ (used) & $0.05$ & $0.1$ \\
\hline
\texttt{mc} & $1.021$ & $1.464$ & $0.466$ & $0.002$ \\
\hline
\end{tabular}
\end{table}

Table~\ref{tab:dt-expansion} evaluates this approximation for the \texttt{mc} model using $20{,}000$ held-out on-attractor states from the Lorenz~63 system. 
As \(\Delta t\) decreases, the ratio approaches unity, reaching \(1.021\) at \(\Delta t=0.001\), consistent with the small-\(\Delta t\) expansion. At the timestep used in our experiments, \(\Delta t=0.01\), the ratio is \(1.464\), so the leading-order approximation retains the correct order of magnitude. For larger \(\Delta t\), no monotonic behavior is expected because higher-order terms in the flow-map Jacobian mismatch are no longer negligible. At \(\Delta t=0.1\), the ratio drops to \(0.002\), indicating that the leading-order vector-field approximation no longer tracks the numerical one-step flow-map Jacobian mismatch.

\subsubsection{Randomized Jacobian Matching} 

We propose to match Jacobians at perturbed states:
\begin{equation}
    \eqnlab{loss-node-mcjac}
    \mc{L}_{mcjac} = \frac{1}{n_b (m+1)} \sum_{i=1}^{n_b} \sum_{j=0}^{m} \norm{\Jbhat(\vb_j^{(i)}) - \Jb(\vb_j^{(i)}) }_\Frob^2.
\end{equation}
This loss implicitly penalizes the full Hessian tensor mismatch.

\subsection{Model-Constrained Jacobian Matching: Implicit Hessian Matching} 

The following theorem formalizes the implicit Hessian penalty induced by $\mc{L}_{mcjac}$.

\begin{theorem}[Implicit Hessian Matching]
\theolab{implicit_hessian}
Let $\fb,\fbhat:\R^d\to\R^d$ be sufficiently smooth vector fields, with Jacobians $\Jb,\Jbhat$ and Hessian tensors $\Hcal,\Hcalhat$ defined in~\eqnref{Jdef}--\eqnref{Hdef}. 
Define the third-derivative tensors
$$ \mc{T}_{i\ell j k}(\ub):=\dd{^3 f_{i}}{u_{\ell}\,\partial u_j\,\partial u_k}(\ub),
\qquad 
\widehat{\mc{T}}_{i\ell j k}(\ub) := \dd{^3 \hat{f}_{i}}{u_\ell\, \partial u_j\,\partial u_k}(\ub;\theta),
$$ and let 
$\Delta \Jb(\ub) := \Jbhat(\ub) - \Jb(\ub)$, 
$\Delta \Hcal(\ub) := \Hcalhat(\ub) - \Hcal(\ub)$, and $\Delta \mc{T}(\ub):= \widehat{\mc{T}}(\ub)
- \mc{T}(\ub)$.
For $\veps\sim\mc{N}(0,\eta^2 I_d)$, the randomized Jacobian mismatch satisfies
\begin{multline*}
    {\E}_{\veps}\LRs{ \norm{\Delta \Jb (\ub+\veps)}_\Frob^2 }
\;=\;
\underbrace{\norm{\Delta \Jb(\ub)}_\Frob^2 }_{\substack{\text{Jacobian} \\ \text{mismatch}}}
\;+\; 
\eta^2\,\underbrace{\norm{\Delta \Hcal(\ub)}_\Frob^2}_{\substack{\text{implicit Hessian}  \text{mismatch}}} 
\;+\; \\
\eta^2 \sum_{i,\ell}
\Delta J_{i \ell}(\ub)\,
\operatorname{tr}\LRp{\Delta\mc{T}_{i \ell} (\ub)}
\;+\;
\mathcal{R}_\eta, 
\end{multline*}
where 
$\mathcal{R}_\eta$ collects higher-order terms in the perturbation scale. 
\end{theorem}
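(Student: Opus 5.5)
The plan is to Taylor-expand each entry of the Jacobian mismatch about $\ub$ and then take Gaussian expectations term by term. The expansion runs through second order with a third-order remainder. Fix $(i,\ell)$ and write $g_{i\ell}(\ub+\veps) := \Delta J_{i\ell}(\ub+\veps)$. Since $\partial_{u_j}\Delta J_{i\ell} = \Delta\Hcal_{i\ell j}$ and $\partial^2_{u_j u_k}\Delta J_{i\ell} = \Delta\mc{T}_{i\ell jk}$ (by symmetry of mixed partials, with $\Delta\mc{T}_{i\ell}$ denoting the $d\times d$ slice $(j,k)\mapsto \Delta\mc{T}_{i\ell jk}$), Taylor's theorem gives
\begin{equation*}
\Delta J_{i\ell}(\ub+\veps) = \Delta J_{i\ell}(\ub) + \sum_j \Delta\Hcal_{i\ell j}(\ub)\,\epsilon_j + \half\sum_{j,k}\Delta\mc{T}_{i\ell jk}(\ub)\,\epsilon_j\epsilon_k + r_{i\ell}(\veps),
\end{equation*}
where $|r_{i\ell}(\veps)|\le C\norm{\veps}^3$ locally, with $C$ bounded by fourth derivatives of $\fb,\fbhat$. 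For Gaussian $\veps$ I would assume polynomial growth of these derivatives, so that all moments below are finite.

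Next I will square this expansion, writing $a$, $b$, $c$, $r$ for the zeroth-, first-, second-order and remainder pieces, and take expectations using the Gaussian moment identities. These are $\E[\epsilon_j]=0$, $\E[\epsilon_j\epsilon_k]=\eta^2\delta_{jk}$, vanishing of all odd moments, and $\E[\epsilon_j\epsilon_k\epsilon_p\epsilon_q]=\eta^4(\delta_{jk}\delta_{pq}+\delta_{jp}\delta_{kq}+\delta_{jq}\delta_{kp})$. The terms then sort as follows.
\begin{itemize}
\item $a^2 = \Delta J_{i\ell}(\ub)^2$ survives unchanged.
\item The cross term $2ab$ vanishes, since it is linear in $\veps$.
\item $\E[b^2] = \eta^2\sum_j \Delta\Hcal_{i\ell j}(\ub)^2$.
\item $\E[2ac] = \Delta J_{i\ell}(\ub)\,\eta^2\sum_j\Delta\mc{T}_{i\ell jj}(\ub) = \eta^2\,\Delta J_{i\ell}\operatorname{tr}(\Delta\mc{T}_{i\ell})$.
\item $\E[2bc]$ vanishes, since it is cubic in $\veps$.
\item $\E[c^2]=\mc{O}(\eta^4)$.
\item The terms involving $r$ satisfy $\E[2ar]=\mc{O}(\eta^3)$ in general, and in fact $\mc{O}(\eta^4)$ if one expands one order further so that the cubic term's expectation vanishes. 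Likewise $\E[2br]=\mc{O}(\eta^4)$ and $\E[r^2]=\mc{O}(\eta^6)$.
\end{itemize}
Summing over $(i,\ell)$ turns $\sum_{i,\ell,j}\Delta\Hcal_{i\ell j}^2$ into $\norm{\Delta\Hcal(\ub)}_\Frob^2$ and recovers exactly the three displayed terms. Everything else is collected into $\mathcal{R}_\eta$.

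The main obstacle is making the remainder rigorous, namely showing $\mathcal{R}_\eta=\mc{O}(\eta^4)$ (or at least $o(\eta^2)$) as an expectation over an unbounded Gaussian. I would handle this in one of two ways. The first option is to assume the fourth derivatives of $\fb,\fbhat$ are bounded, or grow at most polynomially, on $\R^d$. This holds trivially for Lorenz~63, where $\Hcal$ is constant and $\mc{T}\equiv0$, and for smooth-activation networks. The second option is to split the expectation into $\{\norm{\veps}\le\delta\}$ and its complement, using Gaussian tail decay $\exp(-\delta^2/2\eta^2)$ on the complement. To get the sharper $\mc{O}(\eta^4)$ rather than $\mc{O}(\eta^3)$, I would carry the expansion to fourth order and use that odd Gaussian moments vanish. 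I would also remark that the argument uses only zero mean, isotropic covariance and vanishing third moments, consistent with the paper's comment that Gaussianity is inessential.
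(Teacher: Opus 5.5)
Your proposal is correct and follows essentially the same route as the paper's proof. Both expand $\Delta J_{i\ell}(\ub+\veps)$ to second order, square and sum over $(i,\ell)$, and use $\E[\veps]=0$ and $\E[\veps\veps^T]=\eta^2\mathbf{I}$ to isolate the Jacobian, Hessian, and trace cross terms. Your remainder analysis goes further than the paper, which simply collects leftover terms into $\mathcal{R}_\eta$: you ensure integrability under the unbounded Gaussian through derivative growth bounds, and you sharpen the bound from $\mc{O}(\eta^3)$ to $\mc{O}(\eta^4)$ using vanishing odd moments.
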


\begin{proof}
Expand each component of $\fb$ in a Taylor series around $\ub$:
$$
f_i(\ub + \veps)
= f_i(\ub) + \sum_j J_{ij}(\ub)\,\eps_j + \tfrac{1}{2}\sum_{j,k} \Hcal_{ijk}(\ub)\,\eps_j\eps_k + \mc{O}(\|\veps\|^3).
$$

Differentiating component $i$ with respect to $u_\ell$,
the first- and second-order terms produce $\Hcal_{i\ell j}$ and $\mc{T}_{i\ell j k}$, respectively; symmetry of mixed partials gives
$\Hcal_{i\ell j}=\Hcal_{ij\ell}$ and full symmetry of $\mc{T}_{i\ell j k }$ in ($\ell,j,k$).
This yields
$$
J_{i\ell}(\ub+\veps)
= J_{i\ell}(\ub) + \sum_{j} \Hcal_{i\ell j}(\ub)\,\eps_j 
+ \half\sum_{j,k} \mc{T}_{i\ell j k} (\ub) \eps_j \eps_k 
+ \mathcal{O}(\|\veps\|^3).
$$
Similarly, we obtain 
$$
\hat{J}_{i\ell}(\ub+\veps)
= \hat{J}_{i\ell}(\ub) + \sum_{j} \Hcalhat_{i\ell j}(\ub)\,\eps_j 
+ \half\sum_{j,k} \widehat{\mc{T}}_{i\ell j k} (\ub) \eps_j \eps_k 
+ \mathcal{O}(\|\veps\|^3).
$$
Subtracting the two expansions gives
\begin{equation*}
\Delta J_{i\ell}(\ub+\veps)
= \Delta J_{i\ell}(\ub)
+ \sum_{j} \Delta \Hcal_{i\ell j}(\ub)\,\eps_j 
+ \half\sum_{j,k} \Delta \mc{T}_{i\ell j k} (\ub) \eps_j \eps_k 
+ \mathcal{O}(\|\veps\|^3).
\end{equation*}
%
Squaring and summing over $i, \ell$ result in
\begin{multline*}
\norm{
\Delta \Jb(\ub+\veps)
}_\Frob^2
= 
\norm{
\Delta \Jb(\ub)
}_\Frob^2
+ 2\sum_{i,\ell,j} \Delta J_{i\ell}(\ub)\,\Delta \Hcal_{i\ell j}(\ub)\,\eps_j \\
 + \sum_{i,\ell,j,k} \Delta \Hcal_{i\ell j}(\ub)\,\Delta \Hcal_{i\ell k}(\ub)\,\eps_j\eps_k
+ 
\sum_{i,\ell,j,k} 
\Delta J_{i\ell } (\ub)
\Delta \mc{T}_{i\ell j k} (\ub)\,\eps_j\eps_k
+ \mc{O}(\|\veps\|^3).
\end{multline*}
Taking expectation with respect to the zero-mean Gaussian perturbation, using 
$\E[\veps] = 0$
and $\E[\veps\veps^T] = \eta^2 \mathbf{I}$, the linear term vanishes and the quadratic term becomes
$\eta^2 \norm{ \Delta \Hcal(\ub) }_\Frob^2$. 
The cross term 
$\sum_{i,\ell,j,k} 
\Delta J_{i\ell} (\ub)
\Delta \mc{T}_{i\ell j k} (\ub)\,\eps_j\eps_k$ becomes 
$
\eta^2 \sum_{i,\ell}
\Delta J_{i \ell}(\ub)\,
\text{tr}\LRp{\Delta\mc{T}_{i \ell}},
$
where $\text{tr}\LRp{\Delta\mc{T}_{i \ell}}:=\sum_j \Delta \mc{T}_{i \ell jj}$ denotes the partial trace of the third-derivative tensor over the last two indices.
All remaining higher-order contributions are collected in the remainder $\mathcal{R}_\eta$.
  This completes the proof. 
\end{proof}

\begin{remark}[]
In Theorem~\theoref{implicit_hessian}, the second term is non-negative and directly penalizes the Hessian mismatch. The third term is sign-indefinite and bilinear in the Jacobian mismatch $\Delta \Jb$ and the third-derivative mismatch $\Delta \mc{T}$. Because the first term directly penalizes $\Delta\Jb$, the cross term becomes smaller as the nominal Jacobian mismatch is reduced, provided that the third-derivative mismatch remains bounded. When the cross term is small
relative to the Hessian-mismatch term, and $\eta$ is small enough for the higher-order remainder to be negligible, minimizing the randomized Jacobian
loss promotes Hessian consistency on average.
In this sense, Theorem~\theoref{implicit_hessian} identifies an implicit second-order penalty without requiring construction of the Hessian tensor $\Hcal$. Each sampled perturbation requires only Jacobian evaluations at perturbed states, costing $\mc{O}(d)$ AD passes and $\mc{O}(d^2)$ memory, rather than the
$\mc{O}(d^2)$ AD passes and $\mc{O}(d^3)$ memory required for the explicit Hessian tensor. The framework is practical for high-dimensional
systems where Hessian computation is infeasible.
\end{remark}

\subsection{Total Loss} 

The NODE is trained by minimizing the composite loss
\begin{equation*}
    \mc{L} = \mc{L}_{data}
    + \alpha_{mc} \mc{L}_{mc}
    + \alpha_{mcjac}\mc{L}_{mcjac}
    + \alpha_{jac} \mc{L}_{jac}
    + \alpha_{hes} \mc{L}_{hes},
\end{equation*}
where $\alpha_{mc}$, $\alpha_{mcjac}$, $\alpha_{jac}$, and $\alpha_{hes}$ are non-negative weights controlling the relative strength of each supervision term. Table~\tabref{lossmethod} and Figure~\figref{method-schematic} summarizes the five training strategies considered in this work. For the MC-based variants, the perturbation scale $\eta$ controls the strength of the implicit derivative supervision and is tuned separately for each benchmark problem.

Numerical experiments with the five methods are presented in Section \secref{numerical-results}.
 
\begin{table}[ht]
\centering
\caption{
   Training strategies with increasing derivative supervision. All methods include the data loss $\mc{L}_{\text{data}}$ with unit weight. A ``$+$'' indicates that the corresponding supervision term is included.
    }
    \vspace{0.05in}
\begin{tabular}{l | c c c c c}
\toprule
Method & $\alpha_{mc}$ &  $\alpha_{mcjac}$ &  $\alpha_{jac}$ & $\alpha_{hes}$ & Derivative supervision\\
\midrule
\texttt{mc}    & $+$ & 0 & 0 & 0 & Implicit 1st-order\\
\texttt{mcjac} & $+$ & $+$ & 0 & 0 & Explicit 1st \& Implicit 2nd-order \\
\texttt{naive} & 0 & 0 & 0 & 0  & None \\
\texttt{jac}   & 0 & 0 & $+$ & 0 & Explicit 1st-order \\
\texttt{hes}   & 0 & 0 & $+$ & $+$ & Explicit 1st \& Explicit 2nd-order\\
\bottomrule
\end{tabular}
\tablab{lossmethod}
\end{table}

\begin{figure}[h!t!b!]
    \centering
        \includegraphics[width=\linewidth]{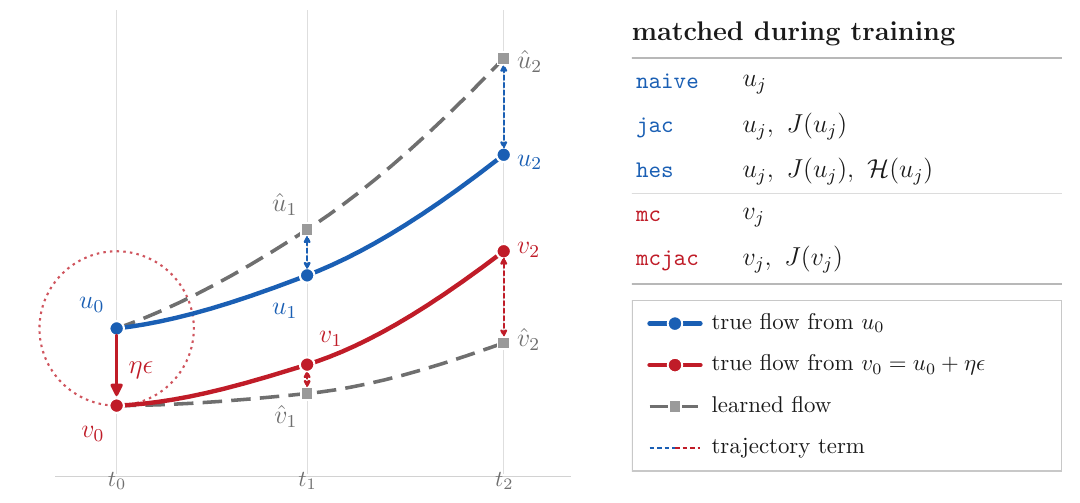}
        \caption{Schematic of the five training strategies. The blue and red branches denote the true trajectories initialized at $u_0$ and at the perturbed state $v_0=u_0+\eta\epsilon$, respectively, while gray dashed curves denote the corresponding learned trajectories. The \texttt{naive}, \texttt{jac}, and \texttt{hes} methods supervise the unperturbed branch, whereas \texttt{mc} and \texttt{mcjac} apply supervision along the perturbed branch. The quantities matched by each method are summarized on the right.}
        \figlab{method-schematic}
\end{figure}

\section{Numerical Experiments}
\seclab{numerical-results}

We evaluate the five methods of Table~\tabref{lossmethod} on the Lorenz~63 system and the coupled Lorenz~96 system. These methods span a hierarchy of derivative supervision. The \texttt{naive} method performs only zeroth-order trajectory matching. The \texttt{mc} method introduces implicit first-order information through the model-constrained approach. The \texttt{jac} method adds explicit Jacobian supervision, while \texttt{hes} further incorporates explicit Hessian supervision. Finally, \texttt{mcjac} combines Jacobian matching with the model-constrained approach, which by Theorem~\theoref{implicit_hessian} induces implicit second-order supervision.
  
The methods naturally form two paired comparisons for assessing the role of second-order supervision.
The first, \texttt{mc} versus \texttt{mcjac}, examines the effect of implicit second-order supervision in the model-constrained setting. The second, \texttt{jac} versus \texttt{hes}, examines the effect of explicit second-order supervision.
 The \texttt{naive} method serves as a baseline, illustrating the limitations of trajectory-only supervision for chaotic systems.

\subsection{Lorenz~63 Model}
\seclab{lorenz63}

We first evaluate the learned models from off-attractor initial conditions. We then repeat the evaluation using initial conditions sampled from the validation attractor across multiple training seeds.

The training data were generated by integrating the true Lorenz~63 system over $t\in[0,500]$ with time step $\Delta t = 0.01$. 
The system was initialized from 32 initial conditions sampled uniformly from a unit cube centered at $(x,y,z) = (-15,-15,5)$.
Among these trajectories, 16 were used for training and the remaining 16 for validation. 
An initial transient of $1{,}000$ time steps was discarded so that all trajectories lie on the attractor.
To approximate the vector field $\fbhat$ of the Lorenz~63 system in \eqnref{node}, we employed a fully connected neural network $\mathbb{R}^3\to\mathbb{R}^3$ with three hidden layers, each containing $512$ neurons, and \texttt{tanh} activations to ensure smooth derivatives. The neural network was optimized using the Adam optimizer with a constant learning rate of $10^{-3}$ for $10^4$ epochs, batch size $n_b=512$, and rollout length $m=1$.

A single-step segment provides the least amount of temporal information and therefore forces the model to rely almost entirely on the local geometric supervision provided by the Jacobian and Hessian terms.
Under ergodicity, a sufficiently large collection of post-transient single-step
pairs $(\ub_0,\ub_1)$ samples the invariant measure on the attractor~\citep{eckmann1985ergodic}.
The associated Jacobians $(\Jb_0,\Jb_1)$ and Hessians $(\Hcal_0,\Hcal_1)$, being state-dependent quantities, then characterize the local tangent and curvature structure over statistically representative regions of the attractor. Thus, in the ergodic limit, local matching can constrain global dynamical invariants. The $m=1$ regime therefore tests whether local geometric constraints alone can reproduce invariant measures and Lyapunov exponents.

\begin{figure}[h!t!b!]
    \centering
    \begin{subfigure}[b]{0.32\linewidth}
        \centering
        \includegraphics[width=\linewidth]{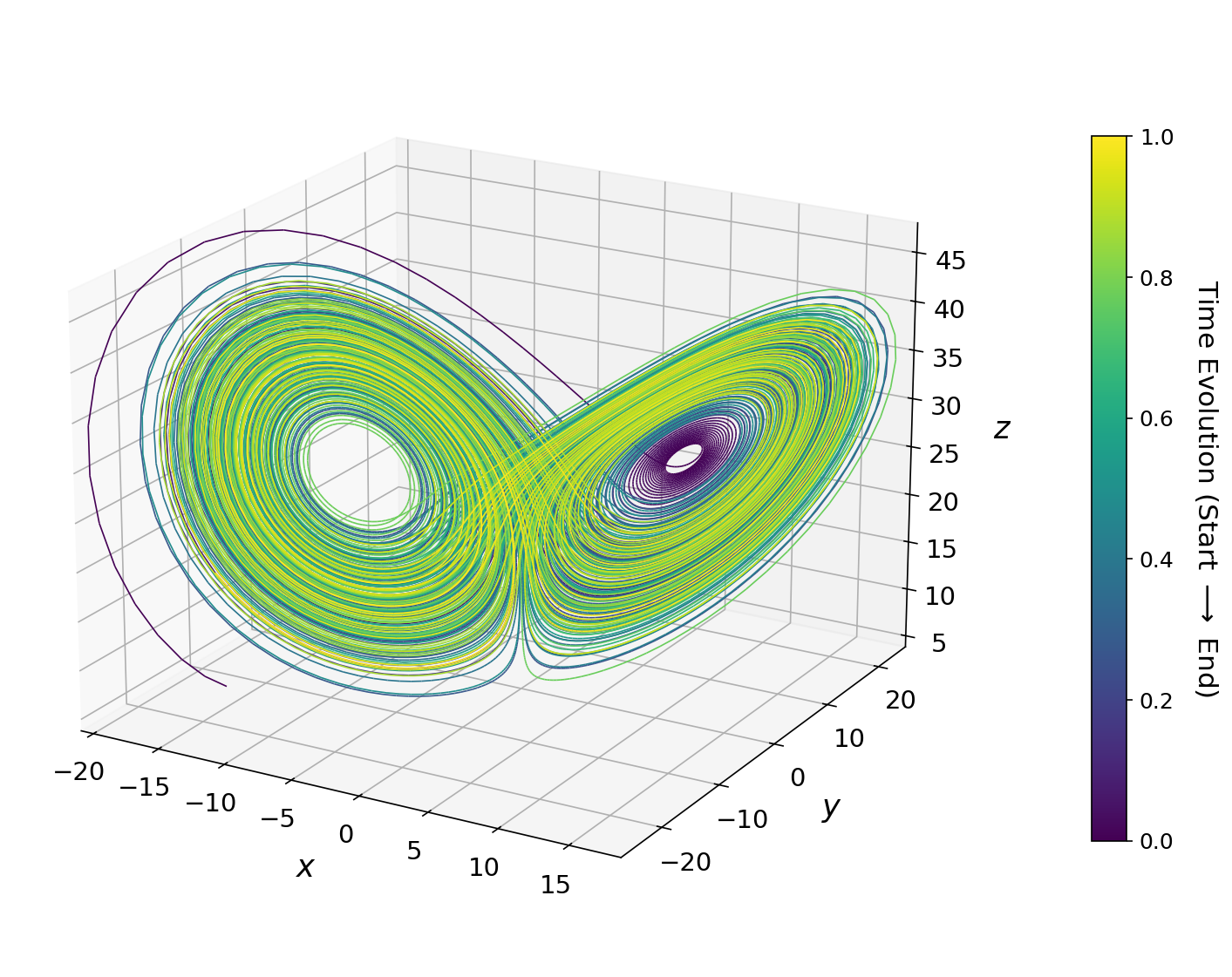}
        \caption{Full trajectory (true)}
        \figlab{full-true}
    \end{subfigure}
    \hfill
    \begin{subfigure}[b]{0.32\linewidth}
        \centering
        \includegraphics[width=\linewidth]{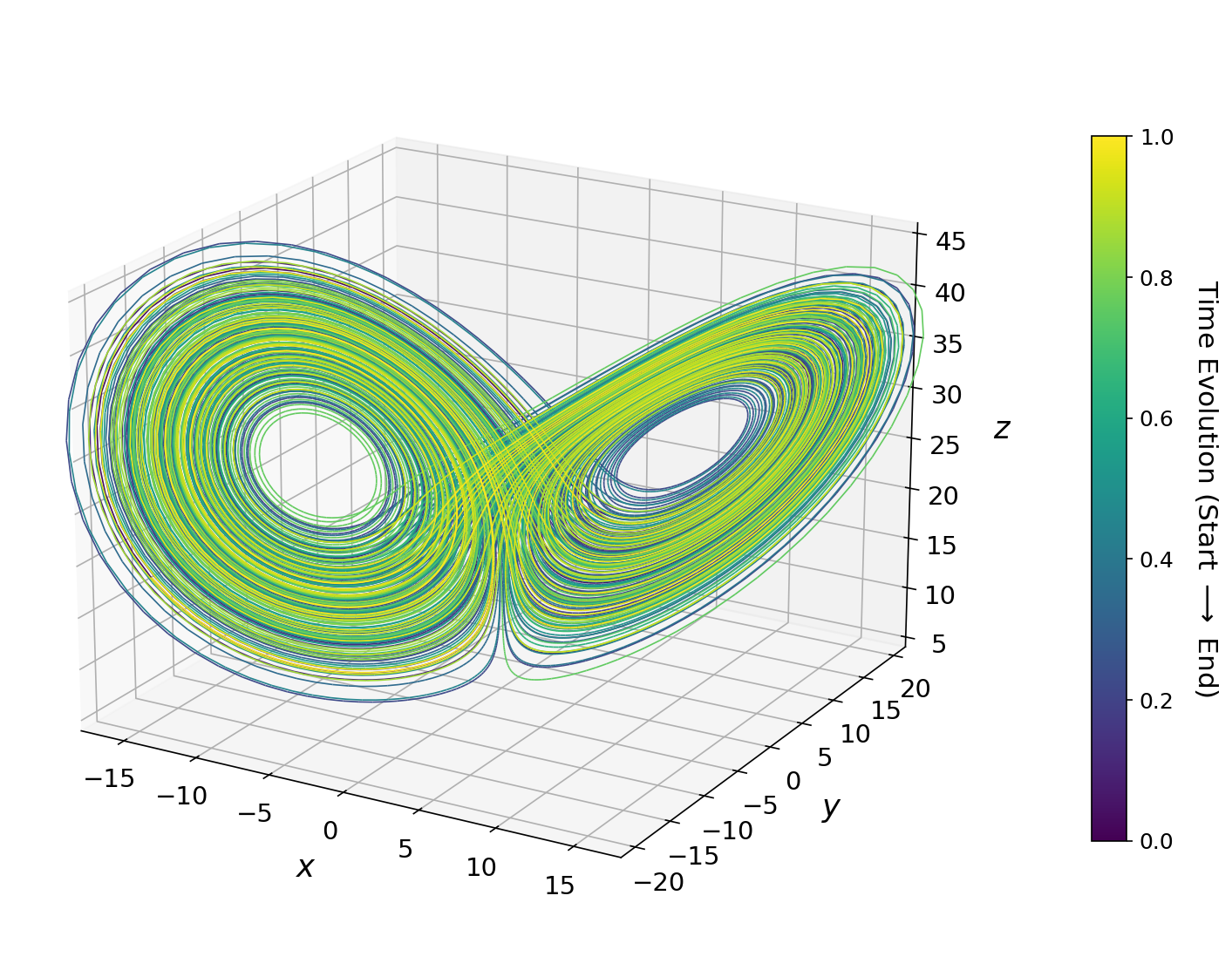}
        \caption{Truncated trajectory (true)}
        \figlab{trunc-true}
    \end{subfigure}
    \hfill
    \begin{subfigure}[b]{0.32\linewidth}
        \centering
        \includegraphics[width=\linewidth]{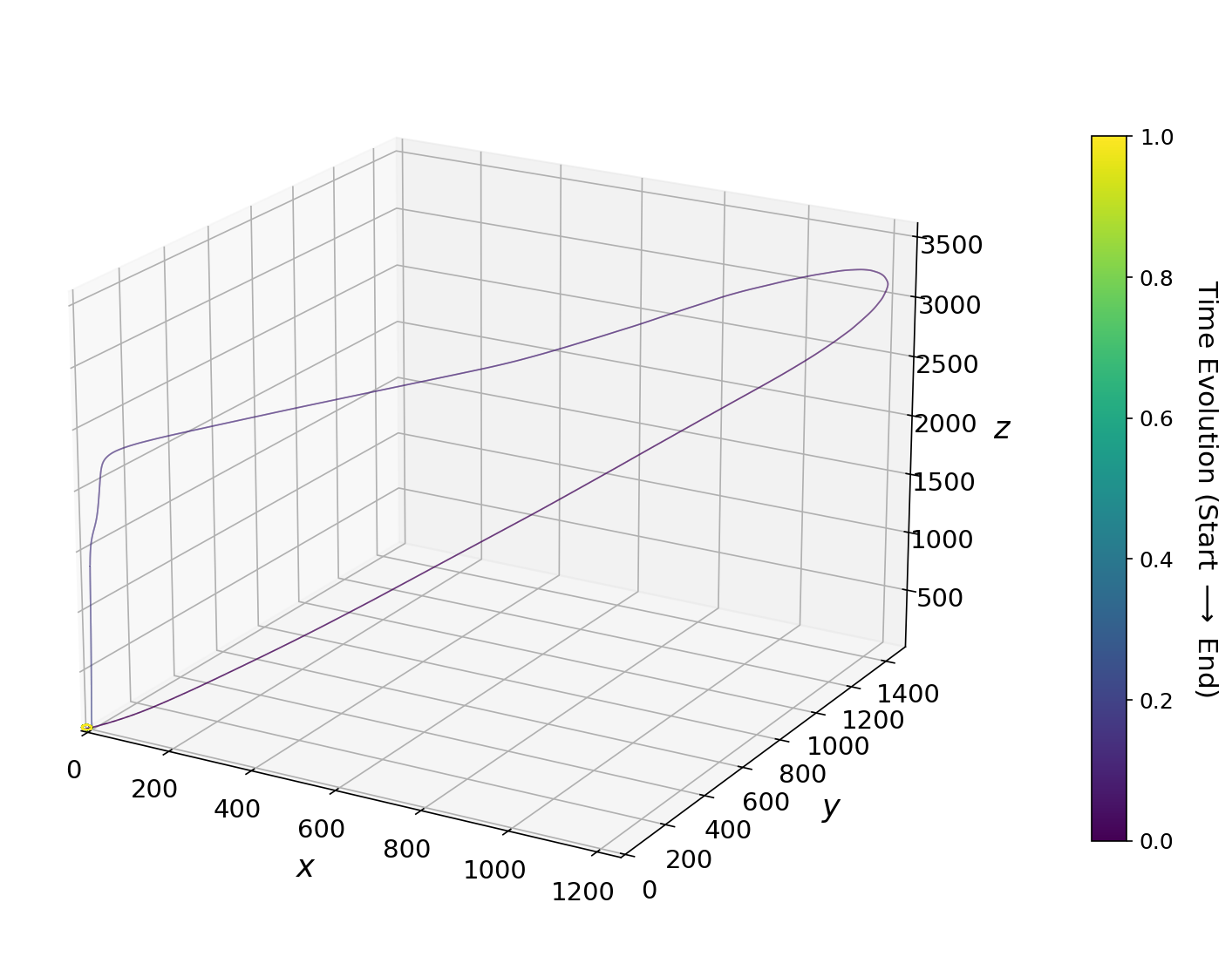}
        \caption{\texttt{naive}}
        \figlab{fail-naive}
    \end{subfigure}
    \caption{Phase space trajectories of the Lorenz~63 system under the minimal temporal setting ($m=1$).
    (a) Full true trajectory ($t\in[0,320]$), 
    (b) truncated true trajectory ($t\in[20,320]$), and 
    (c) learned trajectory of \texttt{naive} model.
    }
    \figlab{phase-trajectories}
\end{figure}

Figure \figref{phase-trajectories} contrasts the true butterfly attractor with the diverging \texttt{naive} model. Without derivative supervision, the learned dynamics escape to $\snor{x}, \snor{y} \sim 10^{3}$ and fail to remain on the attractor. 
Figure \figref{phase-trajectories2} shows that adding any form of derivative supervision---\texttt{mc}, \texttt{mcjac}, \texttt{jac}, or \texttt{hes}---recovers a bounded butterfly attractor. The differences among these methods are examined below.

\begin{figure}[h!t!b!]
    \centering
    \begin{subfigure}[b]{0.23\linewidth}
        \centering        
        \includegraphics[trim=0cm 1cm 4.2cm 0cm,clip=true,width=\linewidth]{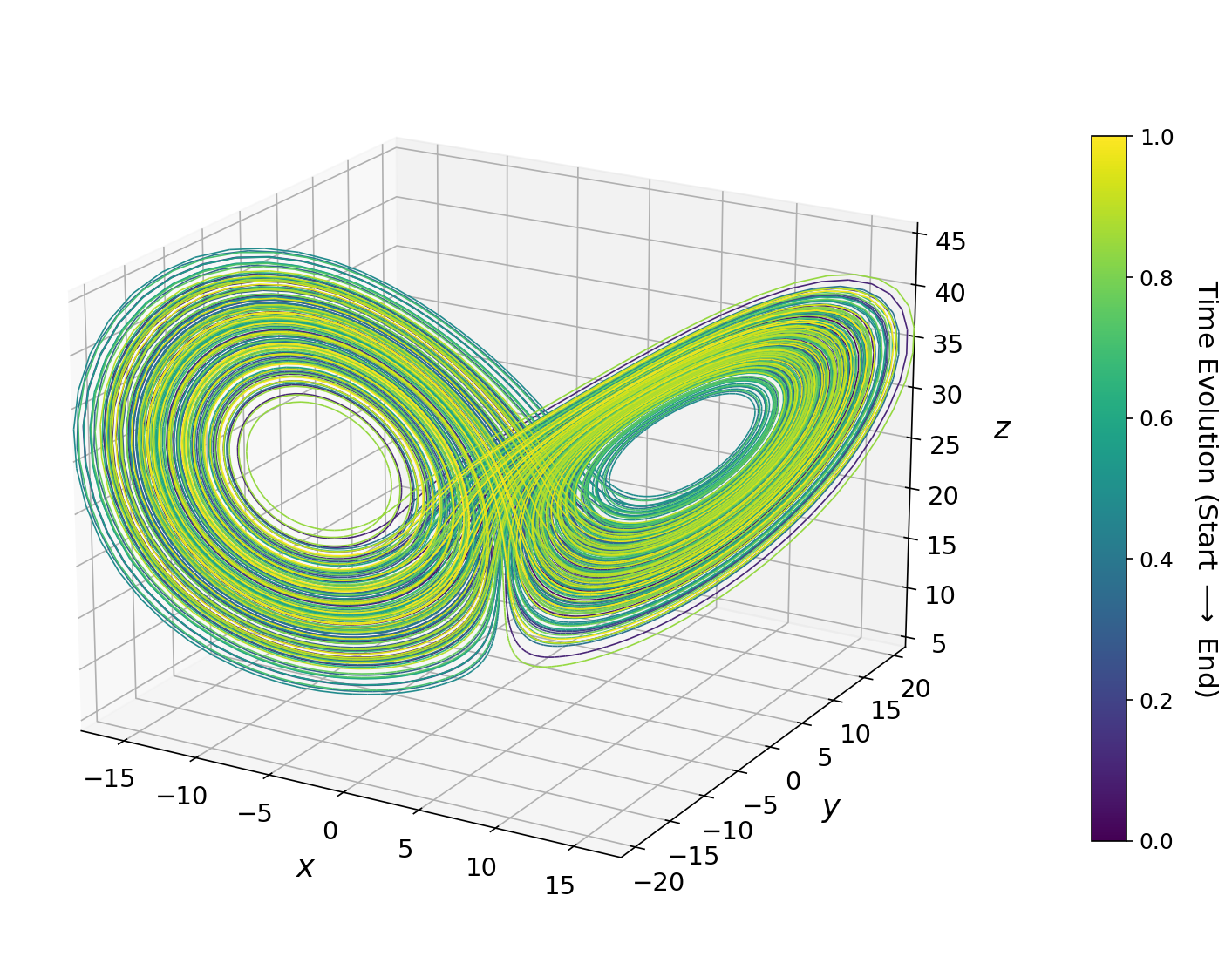}
        \caption{\texttt{mc}}
        \figlab{mc-trunc}
    \end{subfigure}
    \begin{subfigure}[b]{0.23\linewidth}
        \centering
        \includegraphics[trim=0cm 1cm 4.2cm 0cm,clip=true,width=\linewidth]{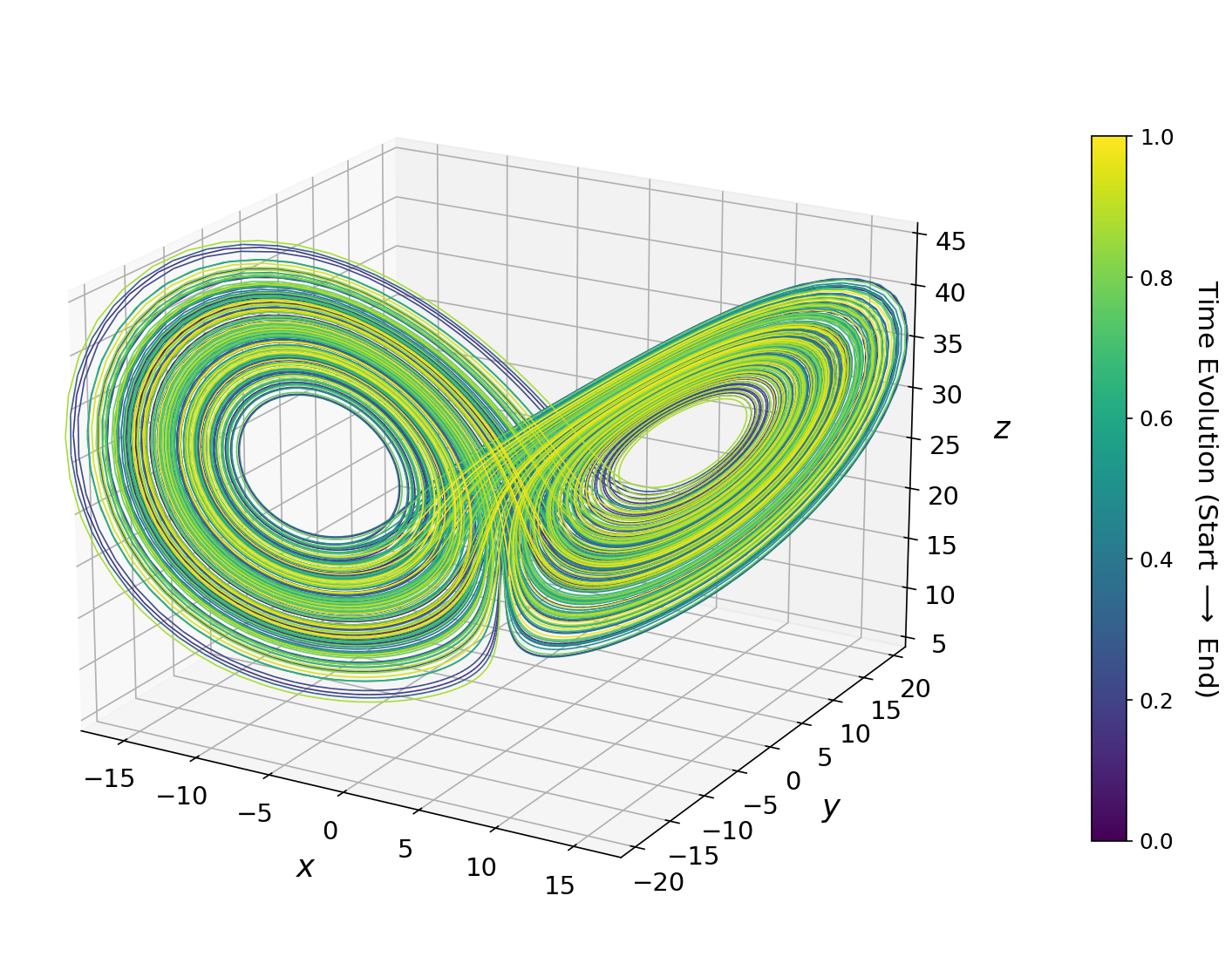}
        \caption{\texttt{mcjac}}
        \figlab{mcjac-trunc}
    \end{subfigure}
    \hfill
    \begin{subfigure}[b]{0.23\linewidth}
        \centering
        \includegraphics[trim=0cm 1cm 4.2cm 0cm,clip=true,width=\linewidth]{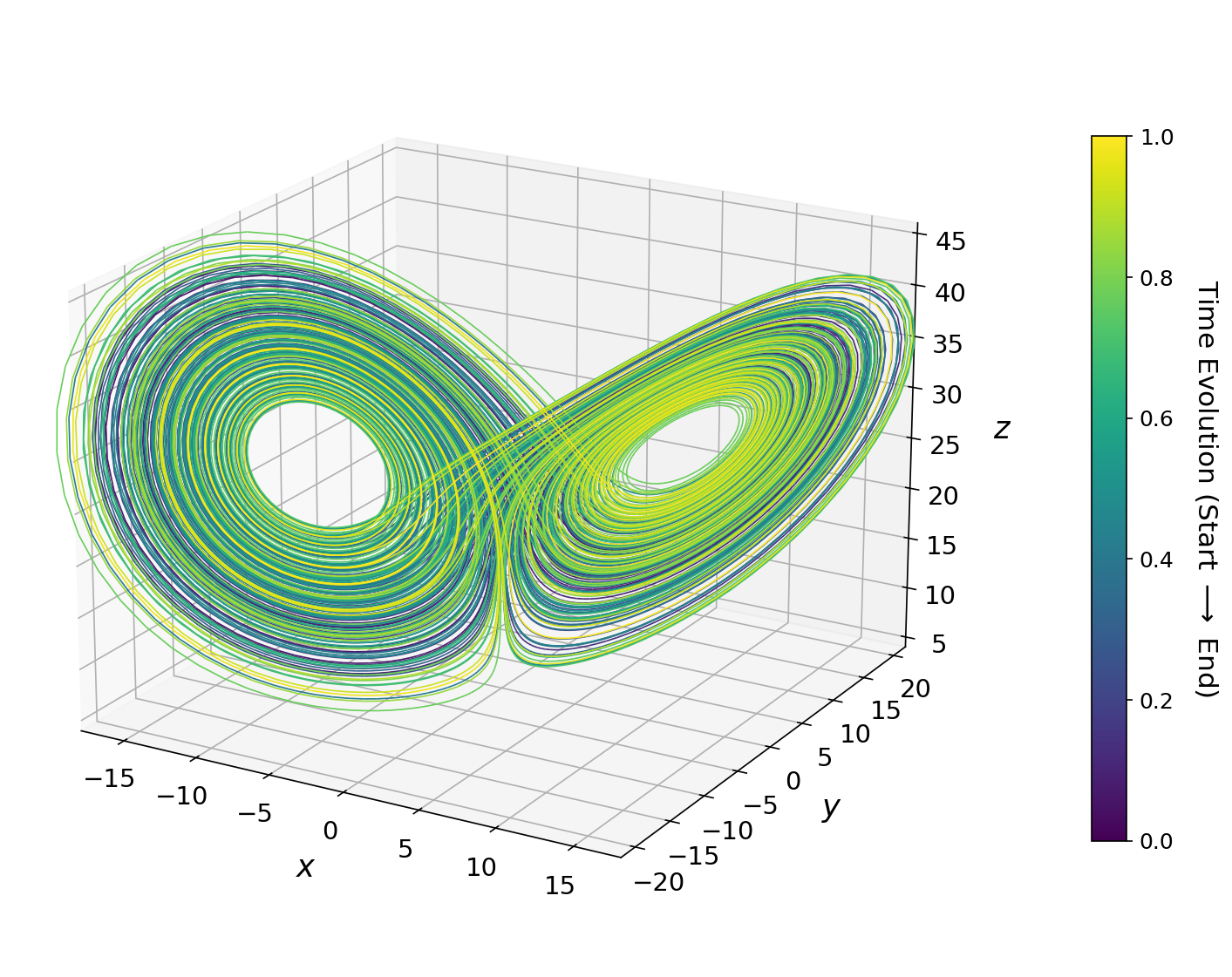}
        \caption{\texttt{jac}}
        \figlab{jac-trunc}
    \end{subfigure}
    \hfill
    \begin{subfigure}[b]{0.23\linewidth}
        \centering
        \includegraphics[trim=0cm 1cm 4.2cm 0cm,clip=true,width=\linewidth]{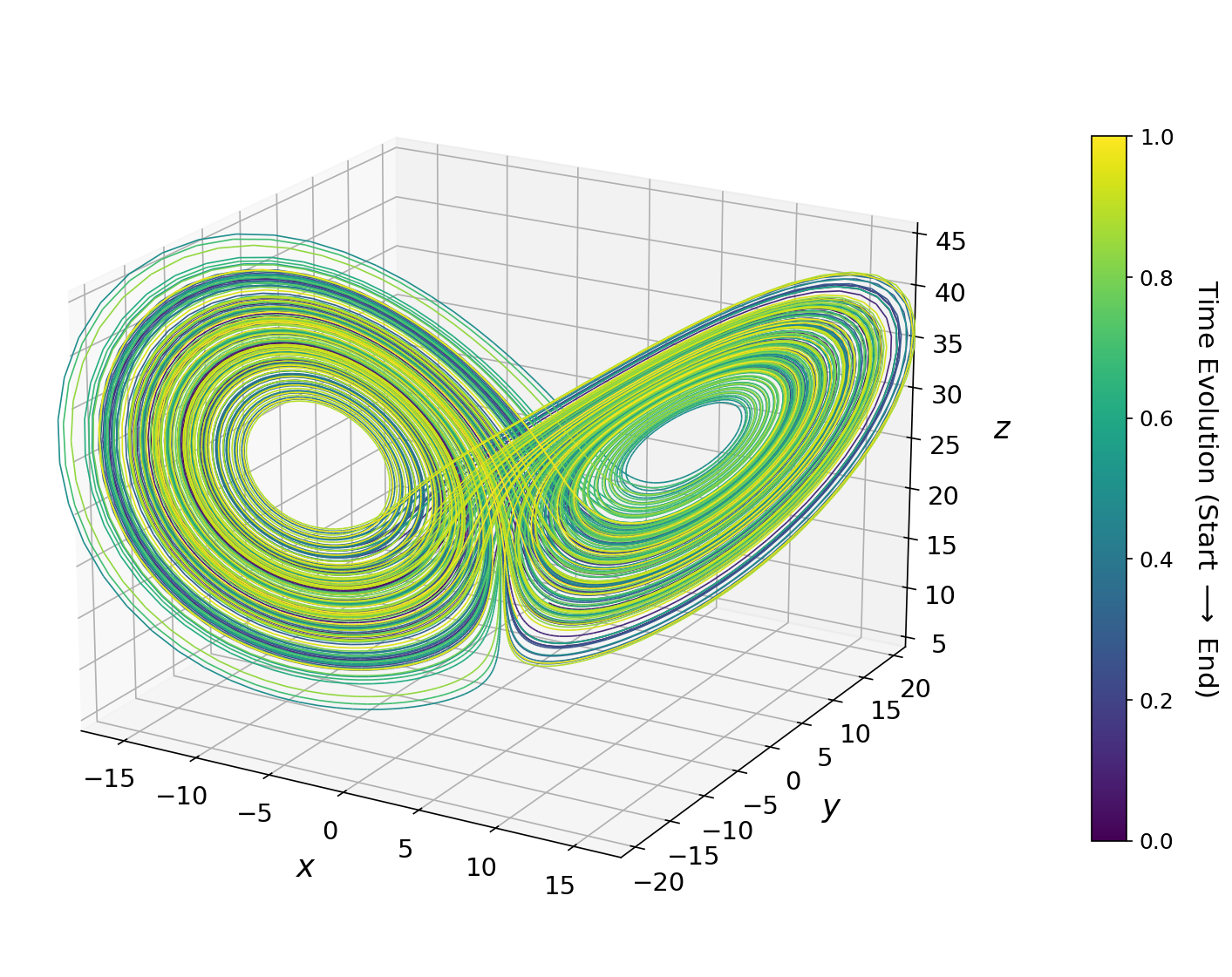}
        \caption{\texttt{hes}}
        \figlab{hes-trunc}
    \end{subfigure}

    \caption{Phase space trajectories of the learned models of (a) \texttt{mc}, 
    (b) \texttt{mcjac},
    (c) \texttt{jac}, and 
    (d) \texttt{hes} under the minimal temporal setting ($m=1$).
    }
    \figlab{phase-trajectories2}
\end{figure}

For the error metrics in 
Tables~\tabref{wd-m1},~\tabref{wd-m2}, and~\tabref{le-wc},
we use a reference Lyapunov spectrum computed directly from the simulated Lorenz~63 trajectories, averaged over the same $1{,}000$ initial conditions used for evaluation: $(\lambda_1^{\rm ref}, \lambda_2^{\rm ref}, \lambda_3^{\rm ref}) \approx (0.9041, -0.0023, -14.5685)$. 
This numerically estimated reference is close to the widely cited literature values ($0.9056, 0, -14.5723$) reported in~\cite{sprott2003chaos}, with the small discrepancy potentially arising from finite-integration time, finite-ensemble size, and temporal discretization.

We generated $1{,}000$ initial conditions sampled uniformly in a unit cube around $(x,y,z) = (-15,-15,5)$. Each trajectory was integrated for $32{,}000$ time steps using the learned NODEs, with the first $2{,}000$ steps discarded as transient. The Lyapunov exponents were computed over the remaining $30{,}000$ steps using the standard QR-based Benettin algorithm \citep{benettin1980lyapunov}. The Wasserstein-1 distance between empirical invariant measures was computed from the same long-time trajectories. The Wasserstein-1 distance quantifies how faithfully the learned dynamics reproduce the global geometry of the strange attractor.

\begin{table}[h!t!b!]
\centering
\caption{
    Statistical comparison between the ground-truth and learned Lorenz~63
    systems under the minimal temporal setting ($m=1$).
    $W^1$ denotes the Wasserstein-$1$ distance.
    Here, $\hat{\lambda}_j$ denotes the $j$-th Lyapunov exponent estimated
    from the learned NODE, while $\lambda_j^{\rm ref}$ denotes the
    corresponding reference exponent computed from the simulated Lorenz~63
    trajectories using the same evaluation interval $t\in[20,320]$.
    For \texttt{naive}, N/A indicates that all evaluated rollouts leave the
    target attractor, making the long-time statistical comparison not
    meaningful. Cell shading indicates relative performance within each
    column on a logarithmic scale, with darker shading corresponding to
    smaller values.
}
\vspace{0.05in}

\begin{tabular}{l | c c c c}
\hline
Method
& $W^1$
& $|\hat{\lambda}_1-\lambda_1^{\rm ref}|$
& $|\hat{\lambda}_2-\lambda_2^{\rm ref}|$
& $|\hat{\lambda}_3-\lambda_3^{\rm ref}|$ \\
\hline

\texttt{mc}
& \cellcolor{heat!43} 1.0534
& \cellcolor{heat!15} 4.5242E-02
& \cellcolor{heat!24} 5.9054E-03
& \cellcolor{heat!70} {\bf 5.5523E-03} \\

\texttt{mcjac}
& \cellcolor{heat!70} {\bf 0.9268}
& \cellcolor{heat!70} {\bf 9.6428E-04}
& \cellcolor{heat!42} 7.3800E-04
& \cellcolor{heat!63} 8.6746E-03 \\

\texttt{naive}
& \cellcolor{gray!12} N/A
& \cellcolor{gray!12} N/A
& \cellcolor{gray!12} N/A
& \cellcolor{gray!12} N/A \\

\texttt{jac}
& \cellcolor{heat!15} 1.1977
& \cellcolor{heat!38} 9.0963E-03
& \cellcolor{heat!15} 1.7947E-02
& \cellcolor{heat!15} 1.5970E-01 \\

\texttt{hes}
& \cellcolor{heat!54} 1.0003
& \cellcolor{heat!55} 2.7298E-03
& \cellcolor{heat!70} {\bf 2.7080E-05}
& \cellcolor{heat!52} 1.6469E-02 \\

\hline
\end{tabular}

\tablab{wd-m1}
\end{table}


Table~\tabref{wd-m1} summarizes the results at $m=1$. Because \texttt{naive} fails to reconstruct the attractor, its long-time statistical metrics are not comparable with those of the other methods and are reported as N/A; for reference, its rollouts give $W^1 = 379.16$ and an order-unity error in the leading Lyapunov exponent. Although nearby trajectories separate exponentially with $\lambda_1^{\rm ref}=0.9041$, the single-step interval $\Delta t=0.01$ yields an expected separation of only $e^{\lambda_1^{\rm ref}\Delta t}\approx 1.009$ per training step, which provides little constraint on the learned vector field. Trajectory-only supervision at $m=1$ therefore leaves the chaotic instability of the learned dynamics essentially unconstrained.

In contrast, all derivative-informed methods produce bounded attractors and yield comparable Wasserstein distances ($W^1 \in [0.93, 1.20]$). The paired comparisons show improved invariant-measure accuracy under second-order supervision. In the Jacobian-based pair, explicit Hessian matching improves upon Jacobian matching, reducing $W^1$ from $1.1977$ (\texttt{jac}) to $1.0003$ (\texttt{hes}). Similarly, in the MC pair, randomized Jacobian matching improves upon randomized trajectory matching, reducing $W^1$ from $1.0534$ (\texttt{mc}) to $0.9268$ (\texttt{mcjac}). In both pairs, the additional supervision yields a more accurate joint $(x,y,z)$ invariant measure. 
%
For the Lyapunov spectrum, \texttt{mcjac} attains the smallest error in \(\lambda_1\), while \texttt{hes} is most accurate for the neutral exponent $\lambda_2$. The errors in $\lambda_3$, however, are not independent of the other exponents. For the true Lorenz~63 system, the exponent associated with the flow direction is zero, $\lambda_2=0$, and the divergence is the state-independent $\nabla\cdot\fb=-(\sigma+1+\beta)=-13.6667$. Because the sum of the Lyapunov exponents equals the time-averaged divergence, the true spectrum satisfies $\lambda_3=-(\sigma+1+\beta)-\lambda_1$. Although the Benettin algorithm computes all three exponents separately, its full-spectrum calculation tracks the contraction of an infinitesimal volume, so their sum obeys this divergence constraint. The clustering of $\lambda_3$ reflects the structure of the Lorenz~63 spectrum, rather than unusually accurate estimation of the most contracting direction~\cite{benettin1980lyapunov,eckmann1985ergodic}.

The marginal distributions show the same paired improvement. Figure~\figref{x-marginals} compares the empirical $x$-marginal of each learned model against the true Lorenz~63 distribution. In the MC pair, adding implicit second-order supervision improves agreement with the reference distribution, reducing the Wasserstein distance of the $x$-marginal from $1.10$ (\texttt{mc}) to $0.37$ (\texttt{mcjac}). The \texttt{mc} histogram exhibits a distributional bias that is mitigated by \texttt{mcjac}. In the Jacobian-based pair, the improvement is smaller but remains consistent, with the Wasserstein distance decreasing from $0.23$ (\texttt{jac}) to $0.14$ (\texttt{hes}).

\begin{figure}[h!t!b!]
    \centering
    \begin{subfigure}[b]{0.23\linewidth}
        \centering
        \includegraphics[width=\linewidth]{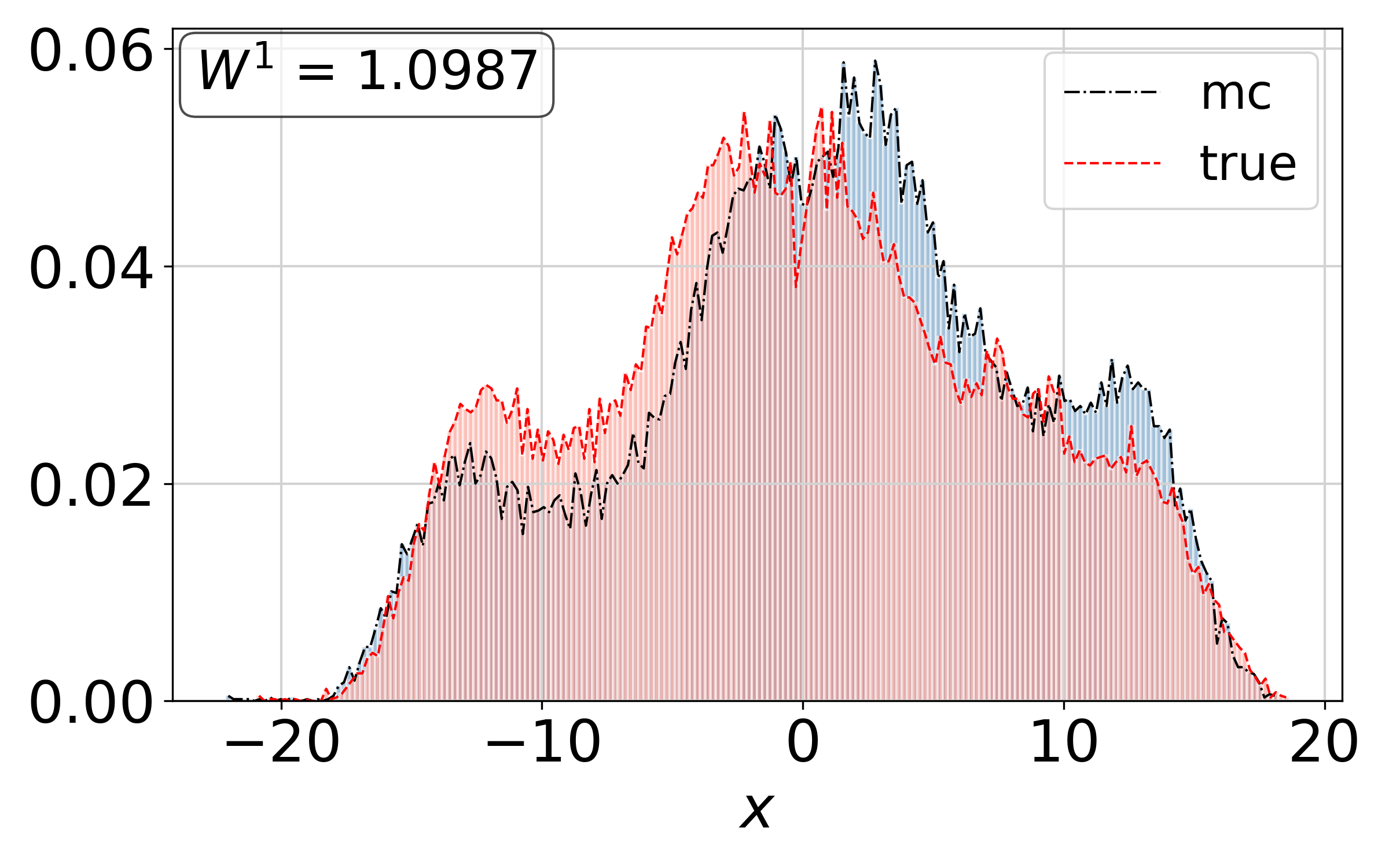}
        \caption{\texttt{mc}}
        \figlab{x-density-mc}
    \end{subfigure}
    \hfill
    \begin{subfigure}[b]{0.23\linewidth}
        \centering
        \includegraphics[width=\linewidth]{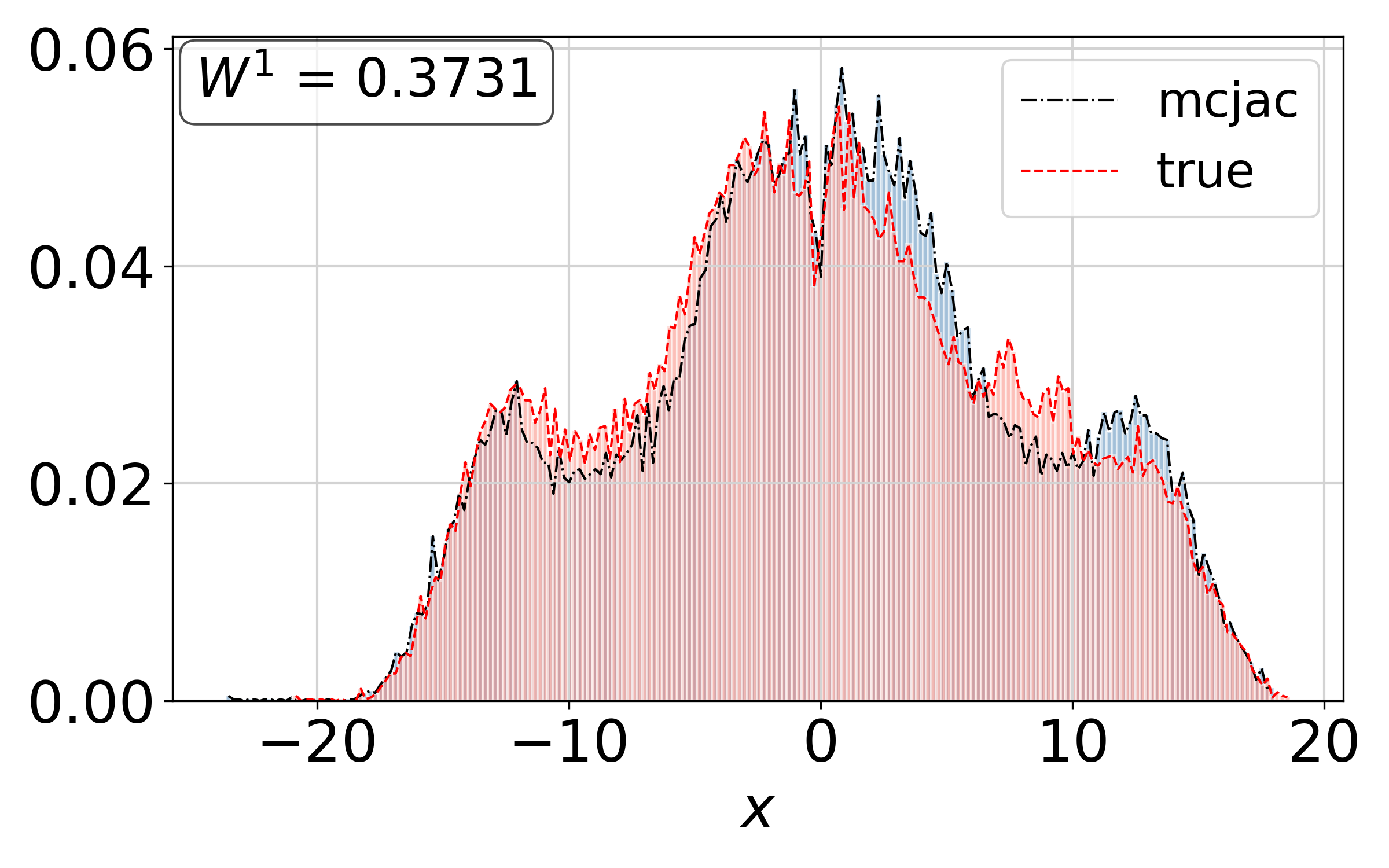}
        \caption{\texttt{mcjac}}
        \figlab{x-density-mcjac}
    \end{subfigure}
    \hfill
    \begin{subfigure}[b]{0.23\linewidth}
        \centering
        \includegraphics[width=\linewidth]{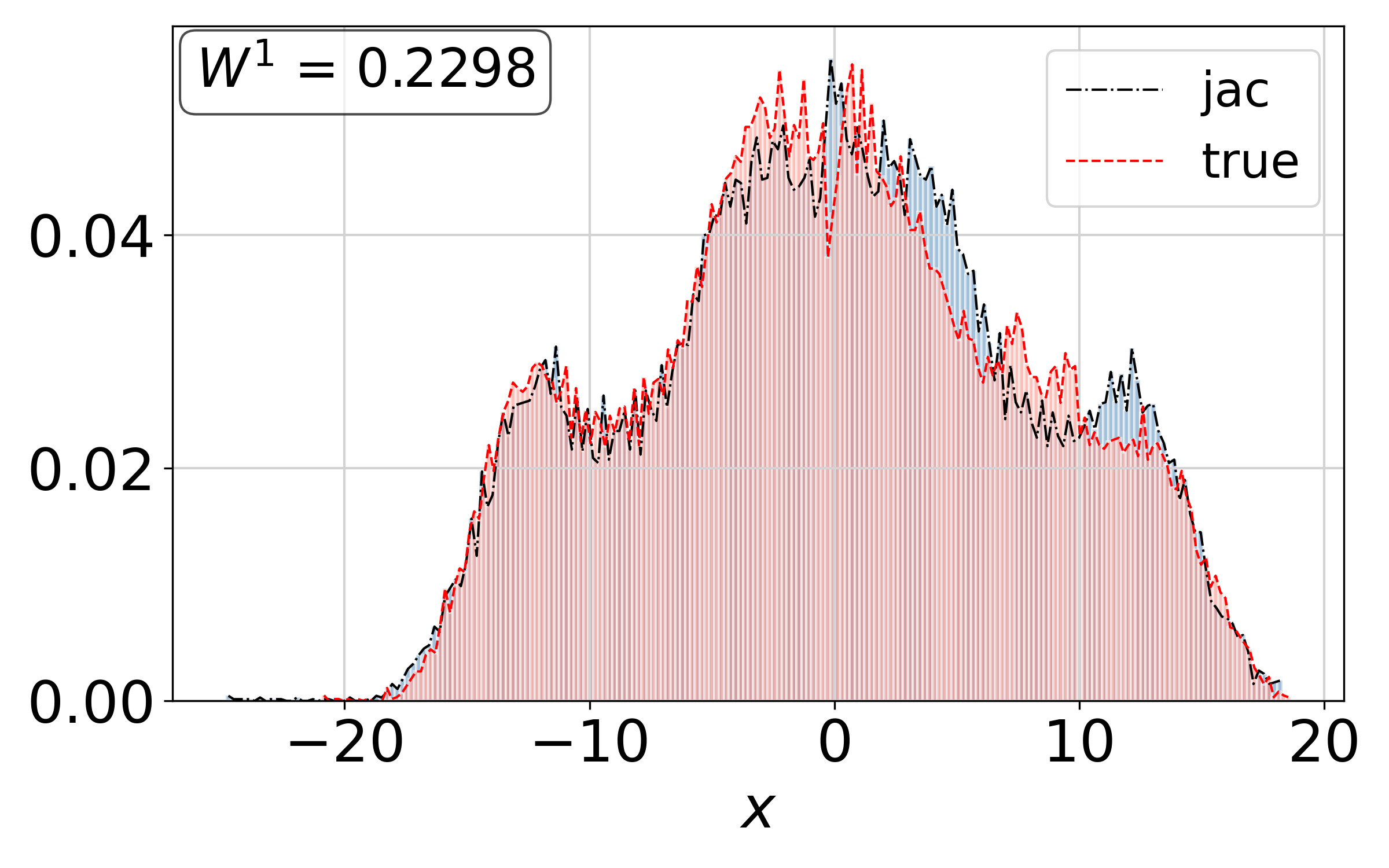}
        \caption{\texttt{jac}}
        \figlab{x-density-jac}
    \end{subfigure}
    \hfill
    \begin{subfigure}[b]{0.23\linewidth}
        \centering
        \includegraphics[width=\linewidth]{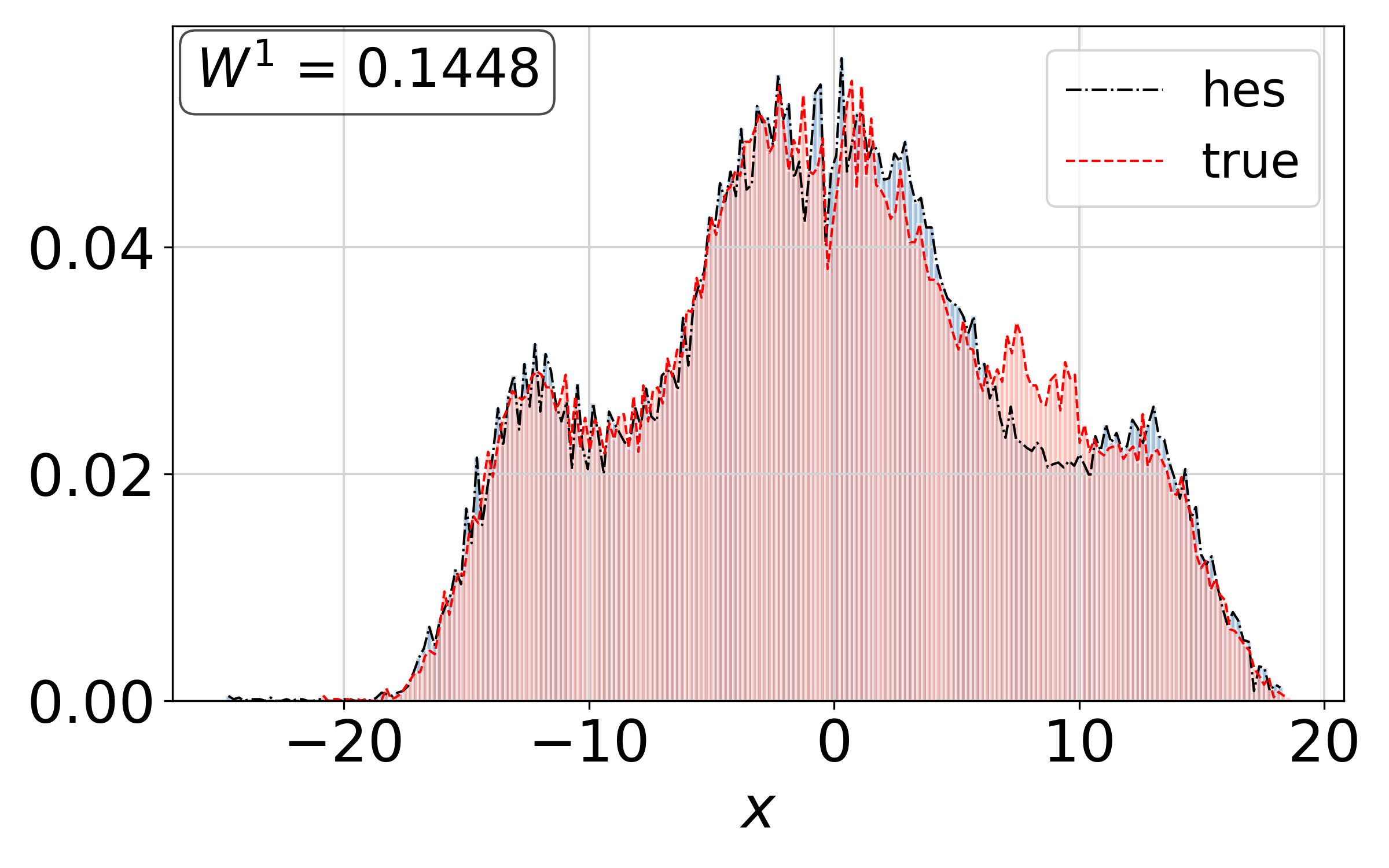}
        \caption{\texttt{hes}}
        \figlab{x-density-hes}
    \end{subfigure}
    \caption{Empirical marginal distribution of the $x$-component for each learned model compared to the true Lorenz~63 distribution ($m=1$).
    The Wasserstein-1 distance ($W^1$) between the learned and true distributions is shown in each panel.
    }
    \figlab{x-marginals}
\end{figure}


\begin{figure}[h!t!b!]
    \centering
    \begin{subfigure}[b]{0.48\linewidth}
        \centering
        \includegraphics[trim=1cm 1.0cm 4.0cm 1.0cm,clip=true,width=\linewidth]{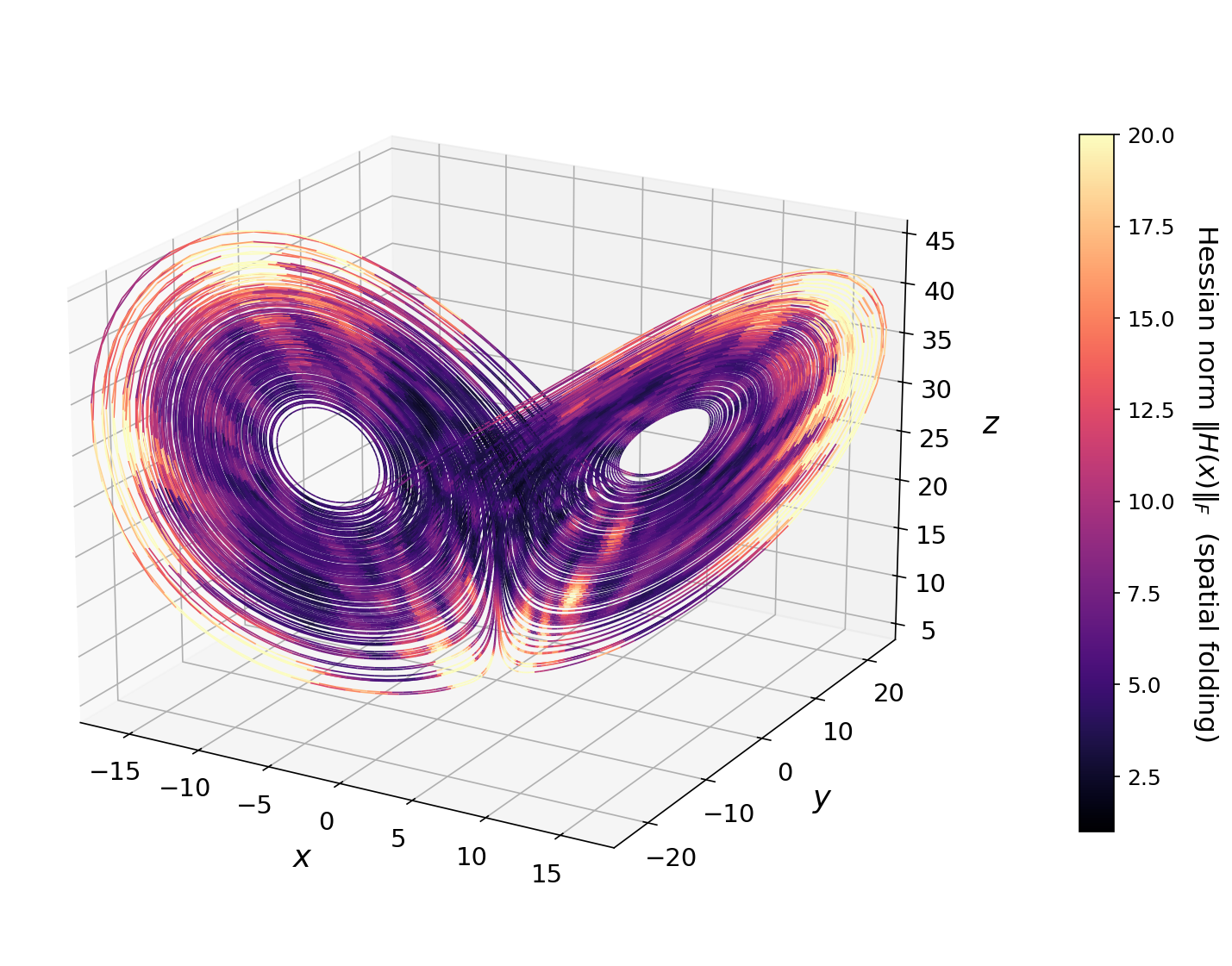}
        \caption{\texttt{mc}}
        \figlab{hnorm-mc}
    \end{subfigure}
    \begin{subfigure}[b]{0.48\linewidth}
        \centering
        \includegraphics[trim=1cm 1.0cm 4.0cm 1.0cm,clip=true,width=\linewidth]{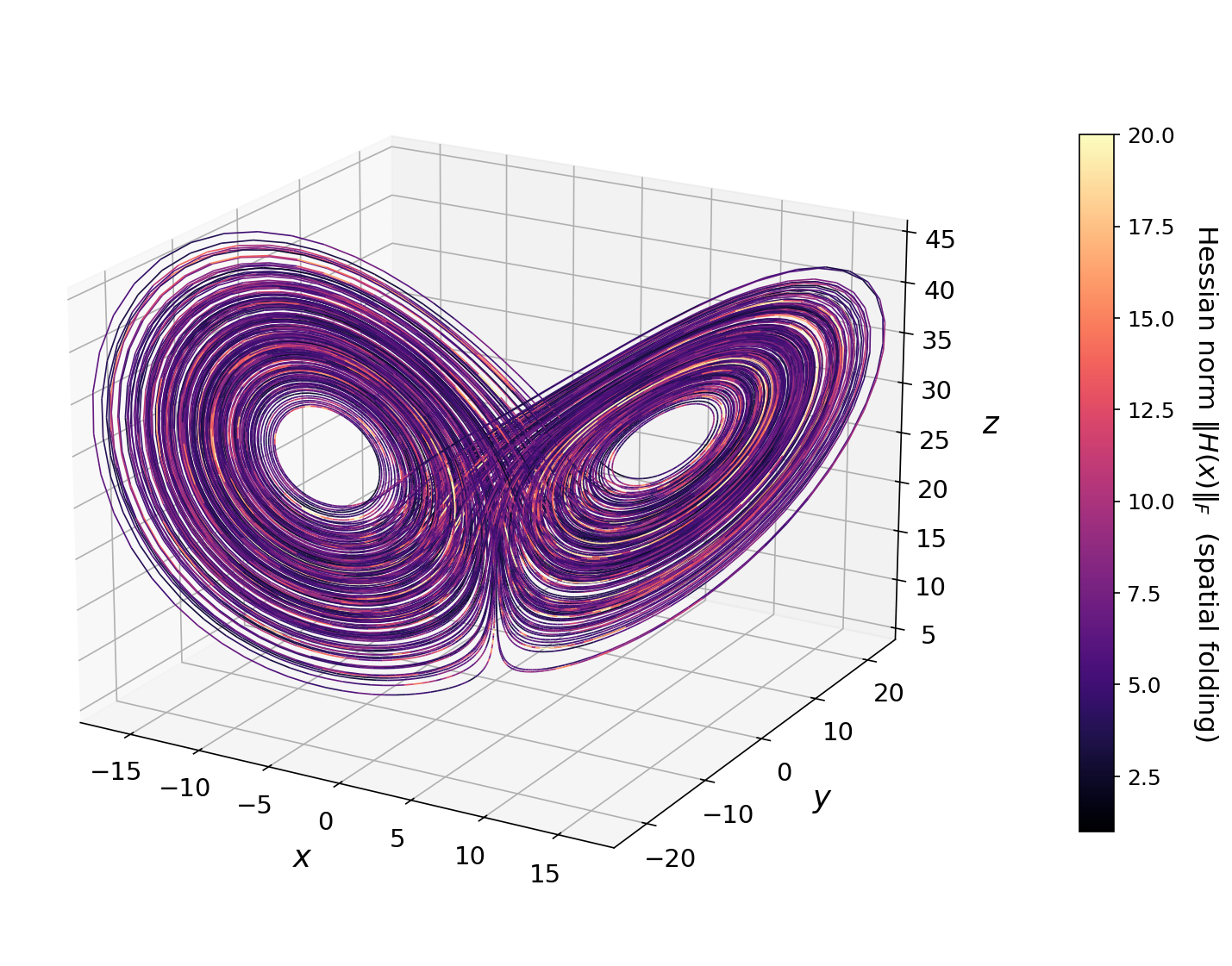}
        \caption{\texttt{mcjac}}
        \figlab{hnorm-mcjac}
    \end{subfigure} \\
    \begin{subfigure}[b]{0.48\linewidth}
        \centering
        \includegraphics[trim=1cm 1.cm 4.cm 1.0cm,clip=true,width=\linewidth]{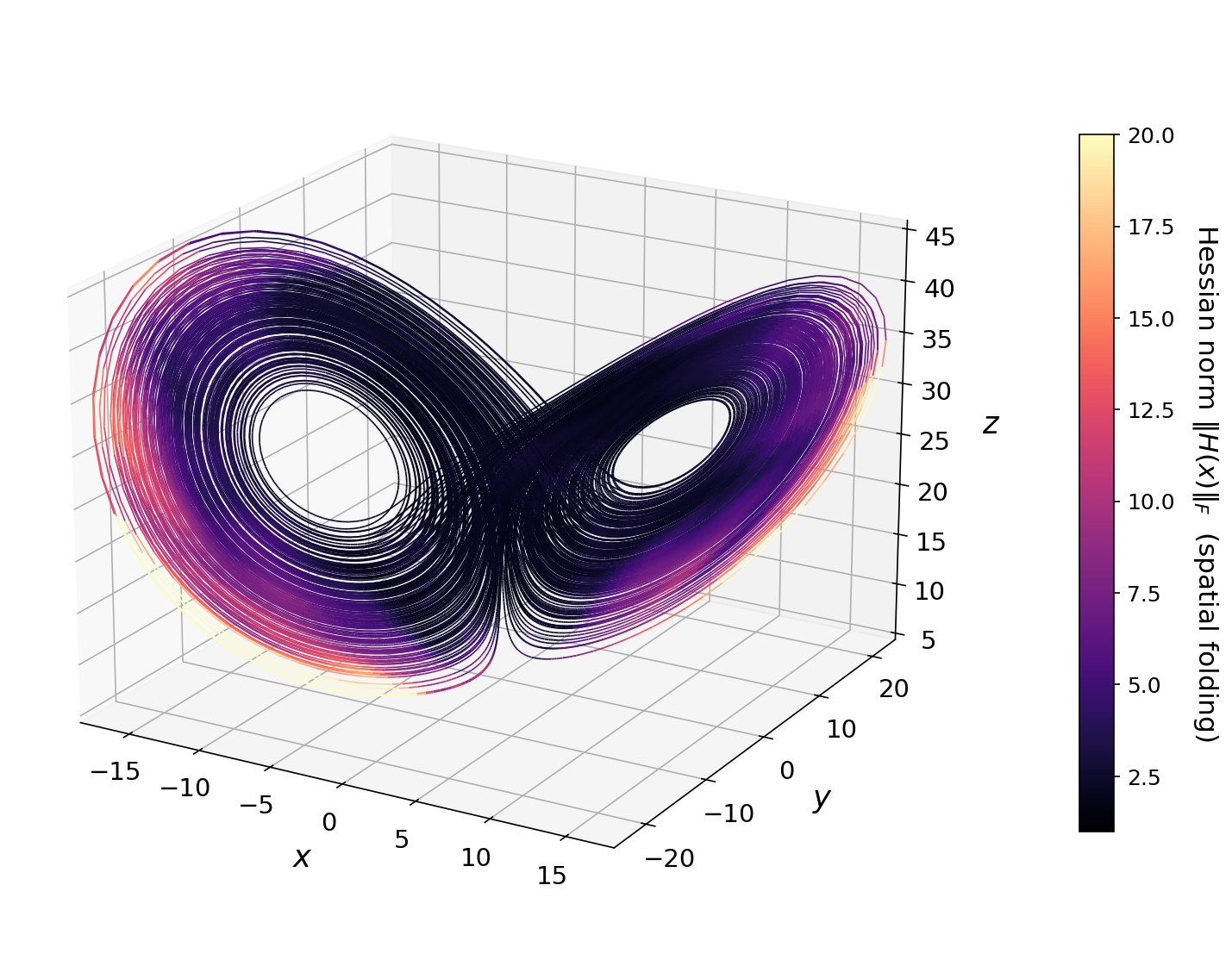}
        \caption{\texttt{jac}}
        \figlab{hnorm-jac}
    \end{subfigure}
    \begin{subfigure}[b]{0.48\linewidth}
        \centering
        \includegraphics[trim=1cm 1.cm 4.cm 1.0cm,clip=true,width=\linewidth]{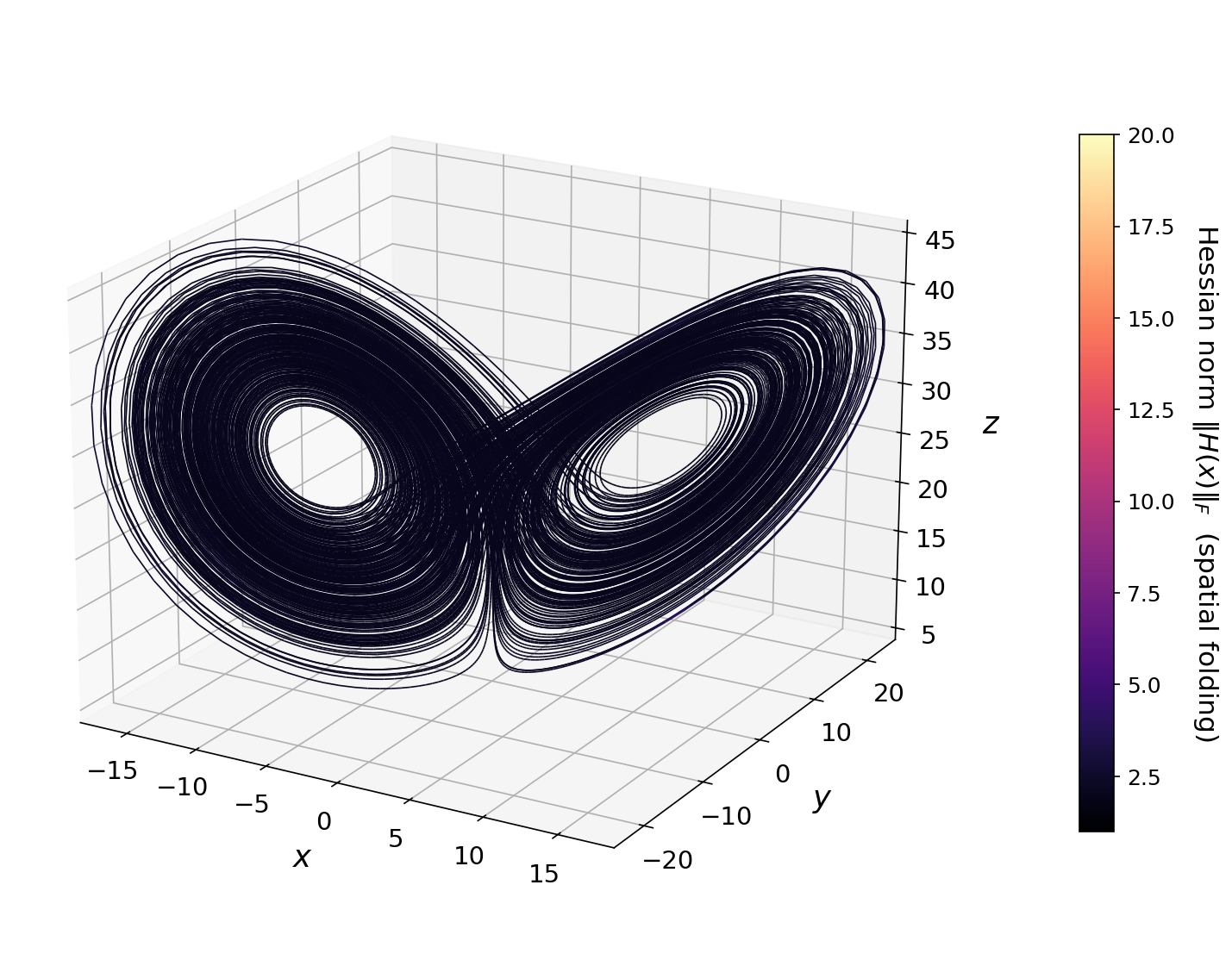}
        \caption{\texttt{hes}}
        \figlab{hnorm-hes}
    \end{subfigure}
    \caption{Hessian Frobenius norm $\| \Hcalhat \|_\Frob$ of each learned model evaluated along Lorenz~63 trajectories at $m=1$: (a)~\texttt{mc}, (b)~\texttt{mcjac}, (c)~\texttt{jac}, (d)~\texttt{hes}. The colormap ranges from $1$ (black) to $20$ (yellow); values above $20$ are clipped. The true Lorenz~63 vector field is bilinear, so its Hessian is spatially constant ($\|\Hcal\|_\Frob=2$, nearly black) at every point of phase space. Mean (std) across the trajectory is $6.76\,(3.97)$,  $2.06\,(0.17)$, $3.43\,(2.35)$, $2.01\,(0.08)$, respectively (Table~\tabref{hnorm}).
    }
    \figlab{hnorm}
\end{figure}

\begin{table}[h!t!b!]
\centering
\caption{Statistics of the Hessian Frobenius norm $\|\Hcalhat \|_\Frob$ along trajectories for each training method.}
\begin{tabular}{l|cccc}
\hline
Method & mean & std & min & max \\
\hline
\texttt{mc}    & 6.76  & 3.97   & 1.56 & 44.93  \\
\texttt{mcjac} & 2.06  & 0.17   & 1.58 & 6.33  \\
\texttt{naive} & 46.81 & 67.69  & 7.48E-04 & 231.7 \\
\texttt{jac}   & 3.43  & 2.35   & 1.51 & 38.37 \\
\texttt{hes}   & {\bf 2.01}  & 0.08   & 1.54 & 4.84  \\
\hline
\end{tabular}
\tablab{hnorm}
\end{table}

The learned models also differ in the second-order structure of their vector fields. Since the Lorenz~63 system is bilinear, the Hessian norm remains constant throughout phase space, with $\norm{\Hcal}_\Frob=2$. Figure~\figref{hnorm} visualizes $\|\Hcalhat\|_\Frob$ along trajectories of the learned models under the $m=1$ setting, with summary statistics reported in Table~\tabref{hnorm}. 
Second-order supervision yields Hessian norms that are close to the true constant: \texttt{hes} (mean $2.01$, std $0.08$) and \texttt{mcjac} (mean $2.06$, std $0.17$). Both remain within $3\%$ of the true value and exhibit only weak spatial variation. The first-order methods deviate far more: \texttt{mc} (mean $6.76$, std $3.97$) and \texttt{jac} (mean $3.43$, std $2.35$) reach peaks exceeding the true norm by more than an order of magnitude. The trajectory-only baseline \texttt{naive} exhibits even larger deviations (mean $46.8$, std $67.7$), consistent with its complete failure to reproduce the attractor.

\begin{table}[h!t!b!]
\centering
\caption{
Cross-loss evaluation of the five supervision strategies at $m=1$. Rows denote the training objective and columns denote the evaluation objective.
Entries are the mean $\pm$ standard error over 16 validation trajectories, evaluated on $100{,}000$ held-out on-attractor windows.
}
\vspace{0.05in}

\resizebox{\linewidth}{!}{%
\begin{tabular}{l | c c c c c}
\hline
Method
& $\mathcal{L}_{\mathrm{naive}}$
& $\mathcal{L}_{\mathrm{jac}}$
& $\mathcal{L}_{\mathrm{hes}}$
& $\mathcal{L}_{\mathrm{mc}}$
& $\mathcal{L}_{\mathrm{mcjac}}$ \\
\hline

\texttt{mc}
& \cellcolor{heat!60} 2.06E-5 \(\pm\) 1.40E-6
& \cellcolor{heat!48} 4.09E-1 \(\pm\) 6.20E-3
& \cellcolor{heat!43} 1.43E+0 \(\pm\) 9.80E-3
& \cellcolor{heat!61} \textbf{4.27E-3} \(\pm\) 3.90E-4
& \cellcolor{heat!49} 2.35E+1 \(\pm\) 3.10E-1 \\

\texttt{mcjac}
& \cellcolor{heat!70} 8.27E-6 \(\pm\) 5.10E-8
& \cellcolor{heat!68} 5.84E-3 \(\pm\) 1.90E-4
& \cellcolor{heat!66} 1.11E-2 \(\pm\) 3.80E-4
& \cellcolor{heat!70} 1.10E-3 \(\pm\) 4.50E-5
& \cellcolor{heat!70} \textbf{3.49E-1} \(\pm\) 1.10E-2 \\

\texttt{naive}
& \cellcolor{heat!15} \textbf{1.04E-3} \(\pm\) 3.30E-5
& \cellcolor{heat!15} 3.88E+2 \(\pm\) 2.00E+0
& \cellcolor{heat!15} 5.22E+2 \(\pm\) 3.60E+0
& \cellcolor{heat!15} 5.08E+0 \(\pm\) 3.60E-2
& \cellcolor{heat!15} 1.91E+4 \(\pm\) 1.10E+2 \\

\texttt{jac}
& \cellcolor{heat!64} 1.38E-5 \(\pm\) 4.70E-8
& \cellcolor{heat!67} \textbf{6.71E-3} \(\pm\) 4.20E-4
& \cellcolor{heat!51} 2.53E-1 \(\pm\) 3.20E-3
& \cellcolor{heat!48} 3.20E-2 \(\pm\) 1.20E-3
& \cellcolor{heat!47} 3.44E+1 \(\pm\) 8.10E-1 \\

\texttt{hes}
& \cellcolor{heat!62} 1.72E-5 \(\pm\) 1.10E-7
& \cellcolor{heat!70} 3.97E-3 \(\pm\) 3.40E-4
& \cellcolor{heat!70} \textbf{5.26E-3} \(\pm\) 4.40E-4
& \cellcolor{heat!54} 1.31E-2 \(\pm\) 7.50E-4
& \cellcolor{heat!56} 5.65E+0 \(\pm\) 4.10E-1 \\

\hline
\end{tabular}%
}
\tablab{lorenz63-crossloss-m1}
\end{table}

To examine whether derivative supervision also improves the learned vector field, we evaluated every model under each loss on \(100{,}000\) held-out on-attractor states. The evaluation set contains \(6{,}250\) states from each of the \(16\) validation trajectories, sampled every \(0.07\) time units over \(t \in [10,447.43]\); none was used for training. Table~\tabref{lorenz63-crossloss-m1} reports trajectory-matching losses of $8.27\times10^{-6}$ for \texttt{mcjac} and $1.04\times10^{-3}$ for \texttt{naive}, corresponding to a 126-fold reduction. The lower loss for \texttt{mcjac} reflects improved trajectory accuracy in addition to derivative consistency.



\begin{table}[h!t!b!]
\centering
\caption{
Statistical comparison as in Table~\tabref{wd-m1}, except for the $m=2$ setting. Cell shading indicates relative performance within each column on a logarithmic scale, with darker shading corresponding to smaller values.
}
\vspace{0.05in}

\begin{tabular}{l | c c c c}
\hline
Method
& $W^1$
& $|\hat{\lambda}_1-\lambda_1^{\rm ref}|$
& $|\hat{\lambda}_2-\lambda_2^{\rm ref}|$
& $|\hat{\lambda}_3-\lambda_3^{\rm ref}|$ \\
\hline

\texttt{mc}
& \cellcolor{heat!70} {\bf 0.9217}
& \cellcolor{heat!15} 1.0901E-02
& \cellcolor{heat!15} 1.1439E-02
& \cellcolor{heat!15} 1.5818E-01 \\

\texttt{mcjac}
& \cellcolor{heat!42} 0.9916
& \cellcolor{heat!45} 4.6015E-03
& \cellcolor{heat!48} 3.0487E-04
& \cellcolor{heat!55} 1.4677E-02 \\

\texttt{naive}
& \cellcolor{gray!12} N/A
& \cellcolor{gray!12} N/A
& \cellcolor{gray!12} N/A
& \cellcolor{gray!12} N/A \\

\texttt{jac}
& \cellcolor{heat!42} 0.9933
& \cellcolor{heat!70} {\bf 2.0638E-03}
& \cellcolor{heat!40} 7.2673E-04
& \cellcolor{heat!70} {\bf 6.1131E-03} \\

\texttt{hes}
& \cellcolor{heat!15} 1.0393
& \cellcolor{heat!44} 4.6415E-03
& \cellcolor{heat!70} {\bf 2.7578E-05}
& \cellcolor{heat!49} 1.9238E-02 \\

\hline
\end{tabular}
\tablab{wd-m2}
\end{table}

The \texttt{mcjac} method achieves Hessian accuracy comparable to explicit Hessian supervision (\texttt{hes}) despite never evaluating Hessians directly, consistent with Theorem~\theoref{implicit_hessian}.
We next examine how the benefit of second-order supervision changes when more temporal information is provided during training. Increasing the rollout length from $m=1$ to $m=2$ evaluates each loss at three consecutive states, $(\ub_0,\ub_1,\ub_2)$, rather than two.

Table~\tabref{wd-m2} reports the corresponding results. At $m=2$, the additional temporal constraints improve the \texttt{mc} and \texttt{jac} methods without second-order supervision, and the best ensemble-averaged performance is no longer concentrated among the second-order methods. The $W^1$ values of \texttt{mc} ($0.92$) and \texttt{jac} ($0.99$) become competitive with those of their second-order counterparts, and \texttt{jac} gives the smallest errors in $\lambda_1$ and $\lambda_3$. These results do not, however, establish that second-order supervision is redundant, because ensemble-averaged metrics do not characterize trajectory-to-trajectory variability or rare failures.

Figure~\figref{lyap-x-box-m} shows the box plot of the leading Lyapunov exponent at rollout lengths $m=1$ and $m=2$. 
Each learned model estimates the leading Lyapunov exponent by integrating $1{,}000$ initial conditions over $T = 300$ time units, yielding an ensemble of $\hat{\lambda}_1$ estimates. 
This is a global stability statistic governed by the first-order tangent dynamics along the attractor.
%
Most trajectories from all derivative-informed methods yield leading Lyapunov exponents concentrated around the reference value, $\lambda_1^{\rm ref}=0.9041$. 
However, the two methods lacking second-order supervision (\texttt{mc} and \texttt{jac}) produce rare but catastrophic outliers at $m=1$. The \texttt{mc} method yields two trajectories with $\hat{\lambda}_1 \approx 26$ and $29.1$, while \texttt{jac} produces one trajectory with $\hat{\lambda}_1 \approx -16.7$. In contrast, the second-order methods \texttt{mcjac} and \texttt{hes} exhibit no such failures, with all estimated exponents remaining concentrated near the reference value. 
At $m=2$, \texttt{jac} yields the ensemble-mean \(\hat{\lambda}_1\) closest to the reference value, whereas the estimates from \texttt{mcjac} and \texttt{hes} are tightly concentrated around the reference. Additional temporal information can improve average accuracy, but second-order supervision may still be important for robustness.



\begin{figure}[h!t!b!]
    \centering
    \begin{subfigure}[b]{0.48\linewidth}
        \centering
        \includegraphics[width=\linewidth]{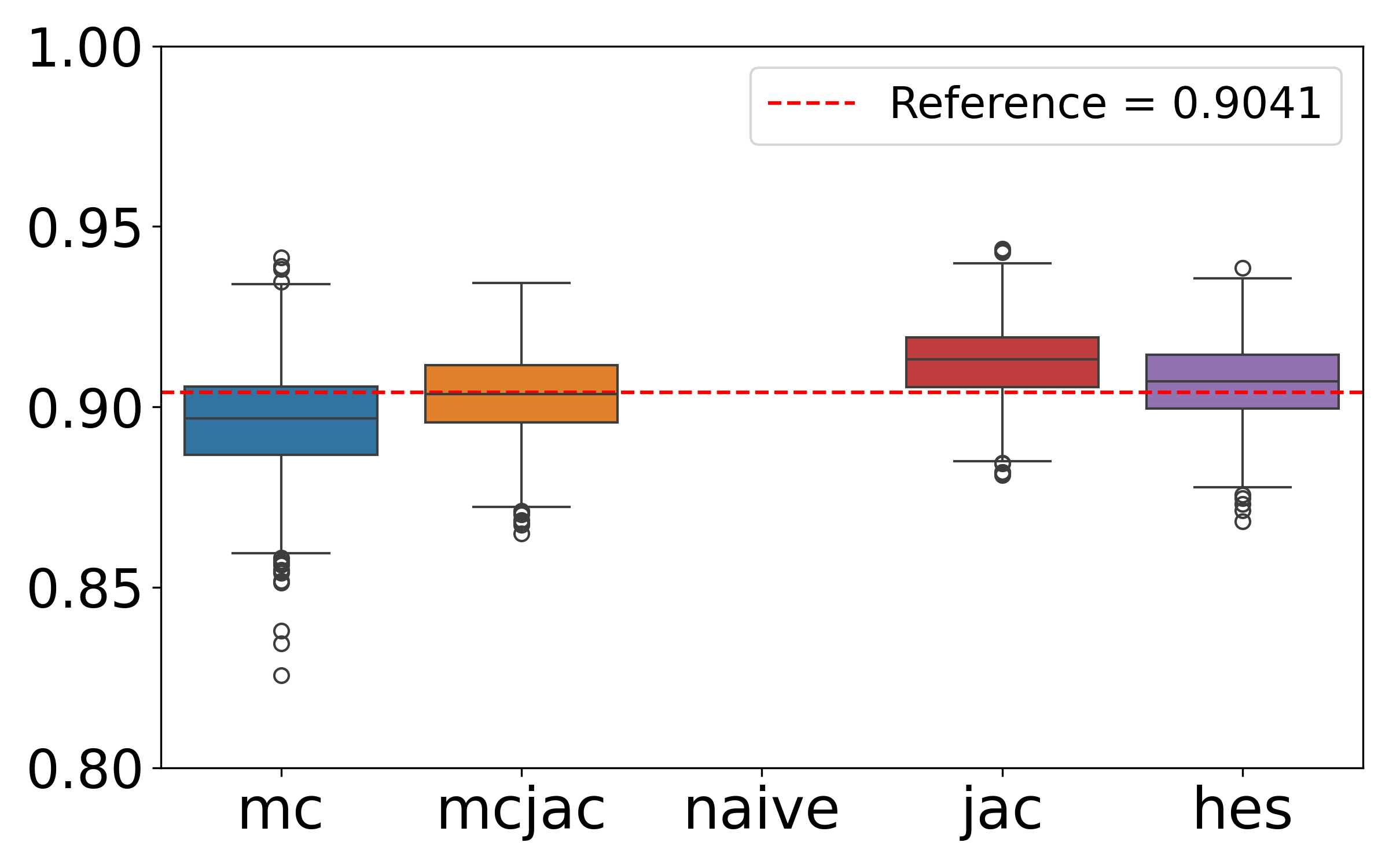}
        \caption{$m=1$}
        \figlab{lyap-x-box-lim}
    \end{subfigure}
    \hfill 
    \begin{subfigure}[b]{0.48\linewidth}
        \centering
        \includegraphics[width=\linewidth]{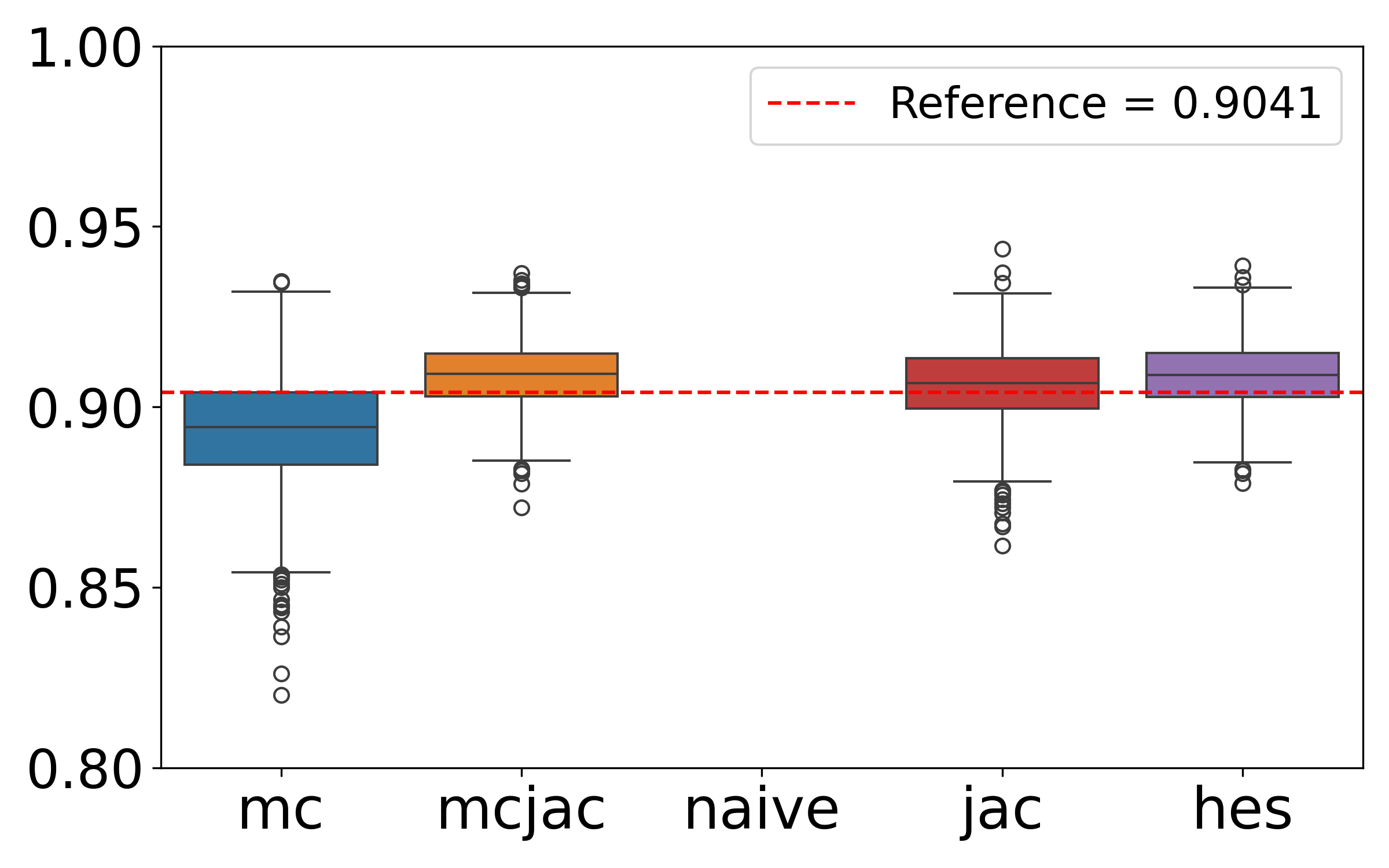}
        \caption{$m=2$}
        \figlab{lyap-m2-x-box-lim}
    \end{subfigure}
    \caption{Box plot of largest Lyapunov exponent ($\lambda_1$) estimation for the Lorenz~63 system at rollout lengths (a) $m=1$ and (b) $m=2$. The red dashed line indicates the reference value \(\lambda_1^{\mathrm{ref}}=0.9041\). The \texttt{naive} estimates lie outside the displayed range for both rollout lengths. At $m=1$, three extreme observations also lie outside the displayed range: two from \texttt{mc} ($\hat\lambda_1\approx26$ and $29.1$) and one from \texttt{jac} ($\hat\lambda_1\approx-16.7$). 
    }
    \figlab{lyap-x-box-m}
\end{figure}


Table~\tabref{le-wc} summarizes the Lyapunov spectrum errors reported in Tables~\tabref{wd-m1} and~\tabref{wd-m2} using the mean squared error (MSE),
$$
err = \frac{1}{3} \sum_{j=1}^{3} \left( \hat{\lambda}_j - \lambda_j^{\rm ref} \right)^2,
$$
together with the corresponding training wall-clock times. The second-order methods \texttt{mcjac} and \texttt{hes} achieve small errors across both rollout regimes ($2.56\times10^{-5}$ to $1.31\times10^{-4}$). The \texttt{mcjac} method attains the lowest error at $m=1$ ($2.56\times10^{-5}$), whereas the \texttt{jac} method becomes the most accurate method at $m=2$ once the catastrophic outliers disappear. Nevertheless, in the minimal temporal regime ($m=1$), \texttt{mcjac} remains more than two orders of magnitude more accurate than \texttt{jac}. Moreover, \texttt{mcjac} achieves this accuracy at a computational cost comparable to that of \texttt{hes}: $61.2$~s versus $64.6$~s at $m=1$, and $96.8$~s versus $90.8$~s at $m=2$.

\begin{table}[h!t!b!]
\centering
\caption{Mean squared error of the Lyapunov exponents ($\frac{1}{3}\sum_{j=1}^3({\hat{\lambda}_j}-\lambda_j^{\rm ref})^2$), where ${\hat{\lambda}_j}$ is the average across $1000$ initial conditions and $\lambda_j^{\rm ref}$ is the corresponding average from the reference Lorenz~63 simulation. Wall-clock times are reported in seconds.}
\begin{tabular}{c|cc|cc|cc|cc}
\hline
$m$ & \multicolumn{2}{c|}{\texttt{mc}} & \multicolumn{2}{c|}{\texttt{mcjac}} & \multicolumn{2}{c|}{\texttt{jac}} & \multicolumn{2}{c}{\texttt{hes}} \\
 & error & wc & error & wc & error & wc & error & wc \\
\hline
$1$ & 7.04E-04 & 41.0 & {\bf 2.56E-05} & 61.2 & 8.64E-03 & 51.8 & 9.29E-05 & 64.6 \\
$2$ & 8.42E-03 & 77.4 & 7.90E-05 & 96.8 & {\bf 1.40E-05} & 63.4 & 1.31E-04 & 90.8 \\
\hline
\end{tabular}
\tablab{le-wc}
\end{table}

\begin{table*}[h]
\centering
\small
\caption{Training-seed variability of the Lorenz~63 methods ($m=1$). Each method was trained with five seeds and evaluated from the same 1{,}000 on-attractor initial conditions; means and standard deviations are computed across trainings. For each seed, in that order, $n_{\mathrm{outlier}}$ counts the rollouts that either leave the attractor (divergence, collapse, or lobe collapse) or satisfy $|\hat{\lambda}_1-\lambda_1^{\mathrm{ref}}| > 0.1$, with $\lambda_1^{\mathrm{ref}} = 0.9065$, the true system's value on the same initial conditions. All other columns exclude these rollouts, separating on-attractor accuracy from failure frequency. $W^1$ denotes the invariant-measure Wasserstein distance, and $W^1_{\lambda_j}$ the Wasserstein distance between the finite-time $\lambda_j$ distributions of model and truth. 
For \texttt{naive} all rollouts fail, so its row is computed unconditionally over all $1{,}000$; it quantifies the size of the failure and is not comparable with the rows above.}
\vspace{0.05in}

\begin{tabular}{l | c | c | c | c | c | c | c | c | c}
\hline
& &
\multicolumn{2}{c|}{$W^1$} & \multicolumn{2}{c|}{$W^1_{\lambda_1}$} & \multicolumn{2}{c|}{$W^1_{\lambda_2}$} & \multicolumn{2}{c}{$W^1_{\lambda_3}$} \\
Method & $n_{\mathrm{outlier}}$
& mean & std
& mean & std
& mean & std
& mean & std \\
\hline

\texttt{mc}
& 0/0/18/38/13
& \cellcolor{heat!54} 0.8499
& \cellcolor{heat!28} 0.0931
& \cellcolor{heat!15} 1.35E-02
& \cellcolor{heat!15} 7.64E-03
& \cellcolor{heat!15} 9.48E-03
& \cellcolor{heat!15} 8.83E-04
& \cellcolor{heat!15} 8.54E-02
& \cellcolor{heat!15} 3.77E-02 \\

\texttt{mcjac}
& 0/0/0/0/0
& \cellcolor{heat!70} {\bf 0.8264}
& \cellcolor{heat!70} {\bf 0.0380}
& \cellcolor{heat!70} {\bf 3.19E-03}
& \cellcolor{heat!70} {\bf 1.93E-03}
& \cellcolor{heat!59} 1.13E-03
& \cellcolor{heat!70} {\bf 7.87E-05}
& \cellcolor{heat!70} {\bf 1.26E-02}
& \cellcolor{heat!70} {\bf 9.97E-03} \\

\texttt{naive}
& \cellcolor{gray!12} 1k/1k/1k/1k/1k
& \cellcolor{gray!12} 578.5175
& \cellcolor{gray!12} 546.8165
& \cellcolor{gray!12} 7.27E-01
& \cellcolor{gray!12} 1.88E-01
& \cellcolor{gray!12} 5.12E-01
& \cellcolor{gray!12} 3.04E-01
& \cellcolor{gray!12} 1.20E+01
& \cellcolor{gray!12} 5.18E-01 \\

\texttt{jac}
& 1/27/7/3/11
& \cellcolor{heat!15} 0.9079
& \cellcolor{heat!15} 0.1229
& \cellcolor{heat!42} 6.68E-03
& \cellcolor{heat!51} 3.09E-03
& \cellcolor{heat!60} 1.08E-03
& \cellcolor{heat!33} 4.01E-04
& \cellcolor{heat!60} 1.76E-02
& \cellcolor{heat!55} 1.45E-02 \\

\texttt{hes}
& 0/4/4/3/6
& \cellcolor{heat!39} 0.8713
& \cellcolor{heat!23} 0.1030
& \cellcolor{heat!53} 4.96E-03
& \cellcolor{heat!54} 2.88E-03
& \cellcolor{heat!70} {\bf 6.62E-04}
& \cellcolor{heat!53} 1.68E-04
& \cellcolor{heat!54} 2.19E-02
& \cellcolor{heat!54} 1.46E-02 \\

\hline
\end{tabular}
\label{tab:l63-seeds-m1}
\end{table*}

The results above use off-attractor initial conditions. To determine whether the conclusions persist under on-attractor initialization and across training seeds, we retrain all five methods with five seeds at \(m=1\). Each trained model is evaluated from the same \(1{,}000\) states drawn from the post-transient validation trajectories. All other evaluation settings, including the \(2{,}000\)-step transient discard, remain unchanged, and all rollouts remain finite. We classify a rollout as a catastrophic Lyapunov outlier if \(|\hat{\lambda}_1-\lambda_1^{\rm ref}|>0.1\). The reference spectrum is that of the true system on those same initial conditions, $(\lambda_1^{\rm ref},\lambda_2^{\rm ref},\lambda_3^{\rm ref}) = (0.9065, -0.0027, -14.5705)$
over the 1,000 trajectories. It is recomputed on the on-attractor initial conditions rather than carried over from the off-attractor set of the submitted tables, whose $\lambda_1^{\rm ref}=0.9041$ differs from it by $2.4\times10^{-3}$.

Table~\tabref{l63-seeds-m1} shows that \texttt{mcjac} produces no outlier in any of the 5,000 on-attractor rollouts and gives the best mean performance in three of the four Wasserstein metrics. Every \texttt{naive} rollout meets the outlier criterion for all five seeds. 
The remaining methods \texttt{mc}, \texttt{jac}, and \texttt{hes} produce outliers for some seeds. Within both paired comparisons, second-order supervision generally improves long-time statistical fidelity and tangent-space accuracy, although the improvement is not uniform across all Lyapunov-distribution metrics. However, the outlier counts do not follow the same pattern.
Despite explicit Hessian supervision, \texttt{hes} still produces outliers for four of the five seeds, showing that its outlier-free behavior in the previous single-seed, off-attractor experiment does not generalize across training seeds. Thus, second-order supervision improves on-attractor accuracy but does not by itself prevent catastrophic failures. We next examine the dynamical origin of these outliers.


\begin{figure}[h!t!b!]
    \centering
    \begin{subfigure}[b]{0.23\linewidth}
        \centering
        \includegraphics[trim=0cm .0cm 3.5cm 1.0cm,clip=true,width=\linewidth]{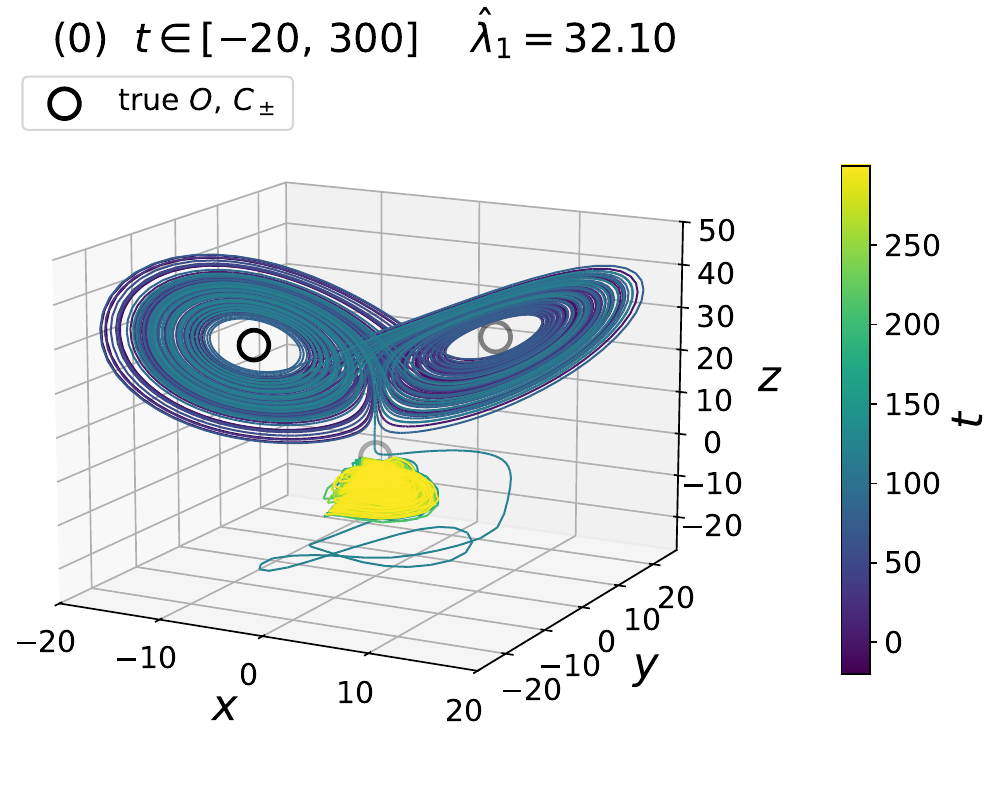}
        \caption{$t\in[0,320]$}
        \figlab{outlier-mc}
    \end{subfigure}
    \begin{subfigure}[b]{0.23\linewidth}
        \centering
        \includegraphics[trim=0cm 0.0cm 0cm 1.0cm,clip=true,width=\linewidth]{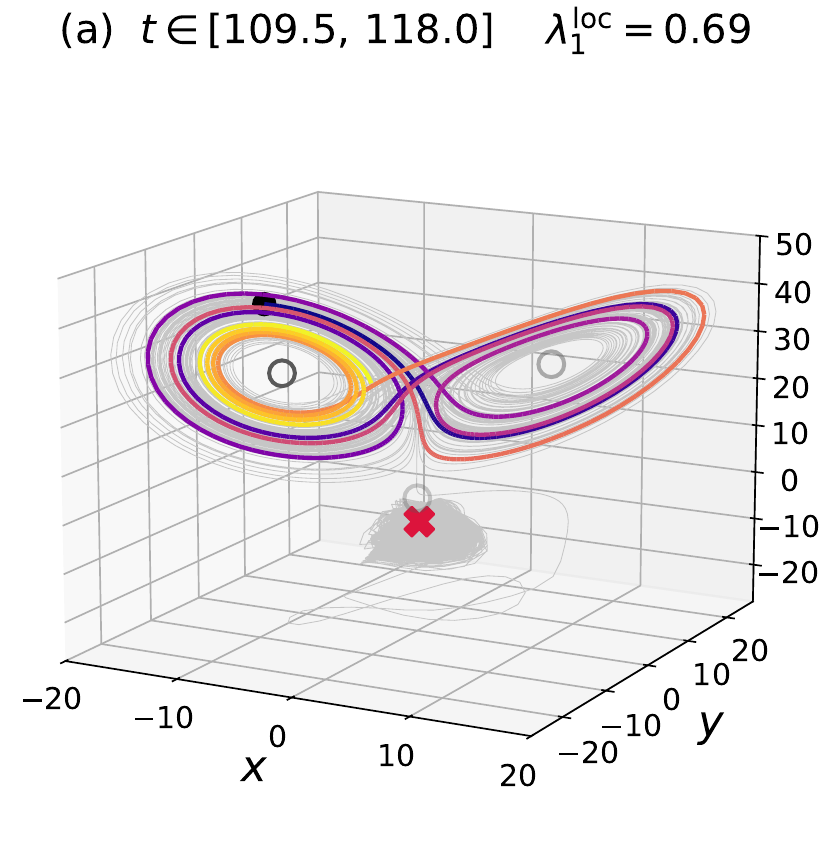}
        \caption{$t\in[129.5, 138.0]$}
        \figlab{outlier-mc-a}
    \end{subfigure} 
    \begin{subfigure}[b]{0.23\linewidth}
        \centering
        \includegraphics[trim=0cm 0.cm 0.cm 1.0cm,clip=true,width=\linewidth]{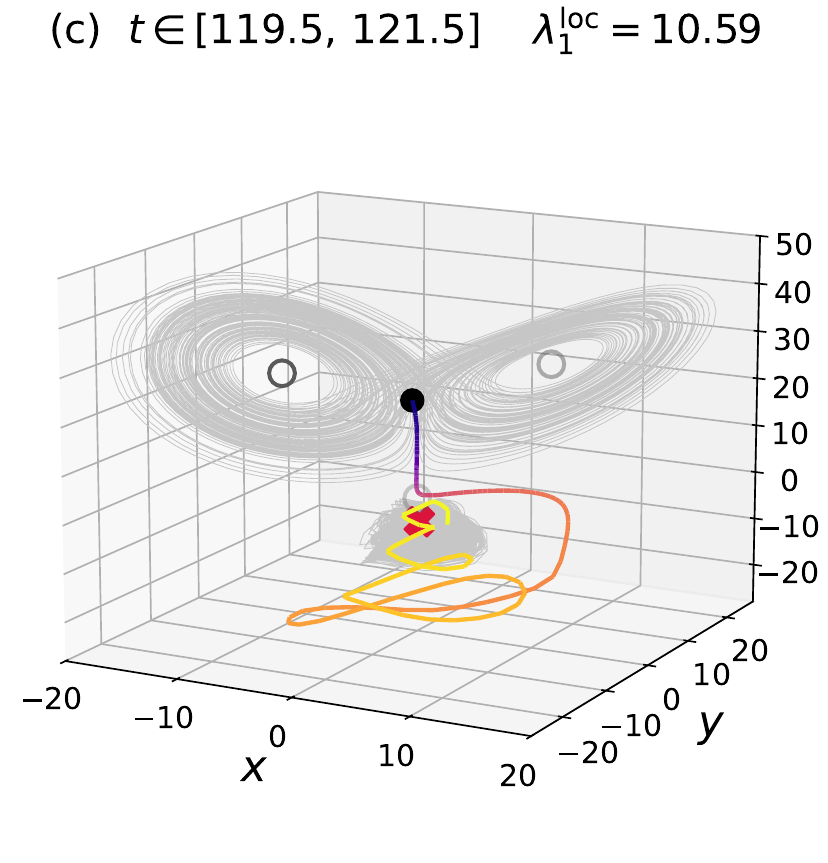}
        \caption{$t\in[139.5, 141.5]$}
        \figlab{outlier-mc-c}
    \end{subfigure}
    \begin{subfigure}[b]{0.23\linewidth}
        \centering
        \includegraphics[trim=0cm 0.cm 0.cm 1.0cm,clip=true,width=\linewidth]{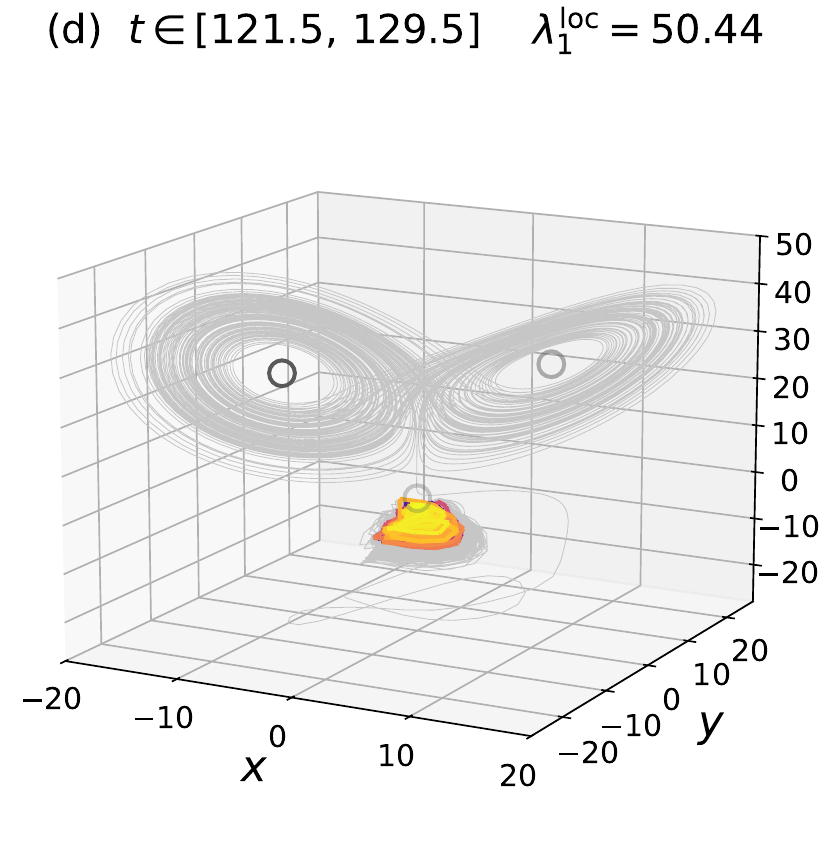}
        \caption{$t\in[141.5, 149.5]$}
        \figlab{outlier-mc-d}
    \end{subfigure}
    \caption{Transition underlying a catastrophic outlier from the five-seed on-attractor experiment for \texttt{mc}, with $\hat \lambda_1=32.1$. Colored curves show the learned trajectory, with time progressing from purple to yellow; gray curves show the reference attractor. (a) Full trajectory. (b) Ordinary lobe-switching motion before failure. The black dot marks the segment start, and the red cross marks the centroid of the spurious attractor. (c) Departure through the central transition region and rapid descent into the off-support region. (d) Capture by the compact spurious attracting set. Open circles denote the true Lorenz equilibria \(O\) and \(C_\pm\).}
    \figlab{outlier-mc}
\end{figure}

Figure~\figref{outlier-mc} traces a representative catastrophic outlier from the five-seed on-attractor experiment for \texttt{mc}, for which $\hat \lambda_1=32.1$. Starting from the on-attractor state ($1.29090,\ 0.82738,\ 19.59723$), the learned trajectory initially follows ordinary lobe-switching dynamics.
The nonzero equilibria \(C_\pm\) organize the two lobes of the attractor, while the saddle at \(O\) organizes the central transition region. 
At $t=139.5$, the trajectory enters this region and transiently aligns with the \(z\)-axis, which is invariant under the true Lorenz dynamics. The eigenvalues of the true Jacobian at the origin are \(+11.83\), \(-8/3\), and \(-22.83\); the \(z\)-axis is its weak stable direction, while the positive eigenvalue governs departure toward either lobe. Under the true dynamics, growth along this unstable direction redirects the trajectory into a lobe before it reaches $O$. The learned field instead continues to drive the trajectory downward, carrying it below $z=0$. At \(t = 140.4\), the trajectory leaves the support of the reference attractor and is eventually captured by a compact spurious attracting set. 

\begin{figure}[h!t!b!]
    \centering
    \begin{subfigure}[b]{0.54\linewidth}
        \centering
        \includegraphics[trim=0cm .0cm 0.1cm 0.8cm,clip=true,width=\linewidth]{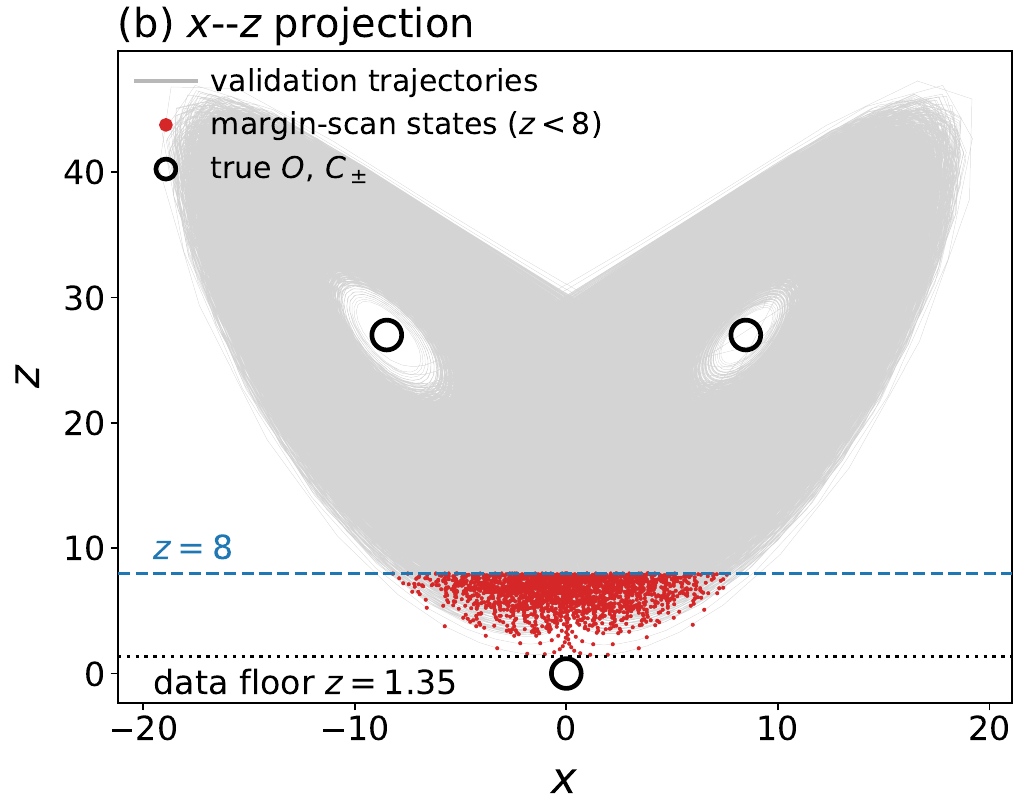}
        \caption{$xz$-plot}
        \figlab{basin-margin-scan-xz}
    \end{subfigure}
    \hfill
    \begin{subfigure}[b]{0.42\linewidth}
        \centering
        \includegraphics[trim=0.2cm .0cm 0cm 0.8cm,clip=true,width=\linewidth]{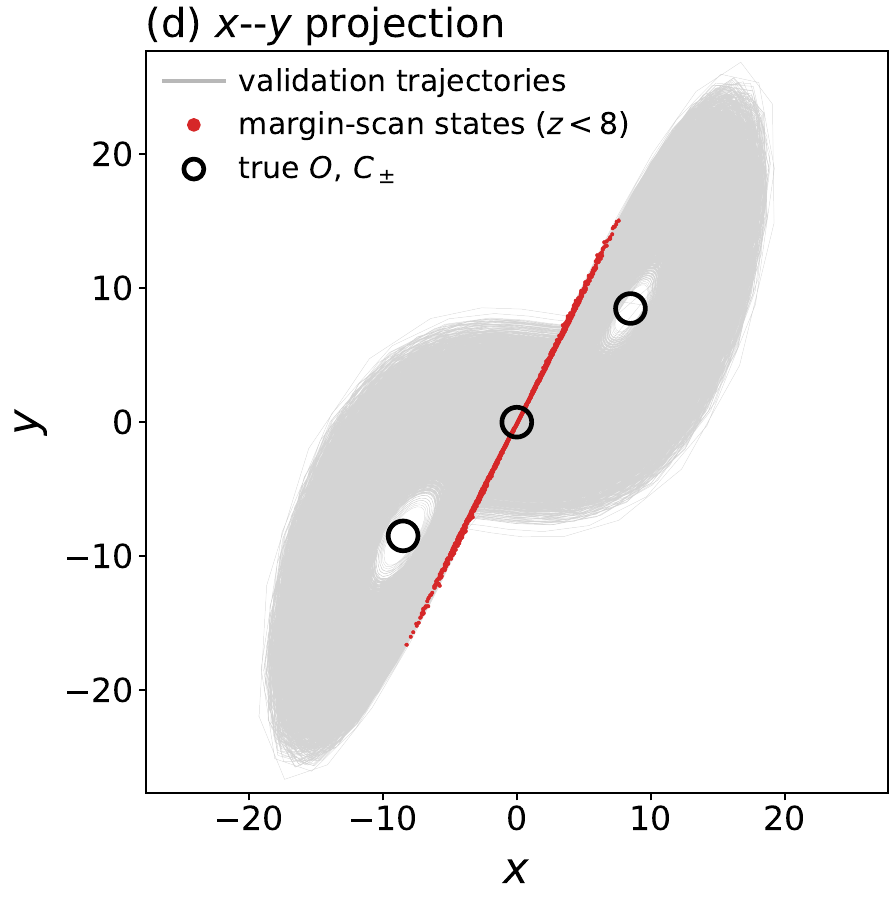}
        \caption{$xy$-plot} 
        \figlab{basin-margin-scan-xy}
    \end{subfigure}
    \caption{States selected for the directional capture scan.
    (a) Validation trajectories in the $x$--$z$ plane (gray), with $2{,}000$ margin-scan states satisfying $z<8$ (red). The blue dashed line marks the selection threshold \(z=8\), and the black dotted line at \(z=1.35\) marks the lower edge of the training-data support.
(b) In the $x$--$y$ projection, the selected states form a narrow corridor through the central transition region. Black circles denote the true equilibria $O$ and $C_\pm$.
    }
    \figlab{basin-margin-scan}
\end{figure}

To probe how readily trajectories near the true attractor are captured by spurious learned dynamics, we perform a directional capture scan. From the 16 validation trajectories, we select \(2{,}000\) states satisfying \(z<8\), as shown in Figure~\figref{basin-margin-scan}. In the \(x\)--\(y\) projection, these states form a narrow corridor near the origin.

Each state is displaced by $\delta$ in the $-z$ direction and integrated for
$T_{\rm cap}=60$. Let
$
\mathcal{T}=[0.9T_{\rm cap},T_{\rm cap}]
$
denote the final $10\%$ of the rollout. A rollout is classified as captured if
$
\max_{t\in\mathcal{T}}
\|\mathbf{x}(t)-\mathbf{x}(T_{\rm cap})\|_2 < 5
$
or
$
\min_{t\in\mathcal{T}}
d_{\mathcal D}(\mathbf{x}(t)) > 1$, 
with 
$d_{\mathcal D}(\mathbf{x})
=
\min_{\mathbf{x}_j\in\mathcal D}
\|\mathbf{x}-\mathbf{x}_j\|_2.
$
The first condition detects localized trajectories, whereas the second detects trajectories that remain outside the training-data support.
Let $n_{\rm cap}(\delta)$ be the number of rollouts satisfying this criterion. We define $1\%$ capture threshold, $\delta_{01}$, as the displacement at which the interpolated capture curve first
reaches $1\%$, corresponding to $20$ of the $2{,}000$ states.

\begin{figure}[h!t!b!]
    \centering
    \includegraphics[trim=0cm .0cm 0.0cm 0.0cm,clip=true,width=\linewidth]{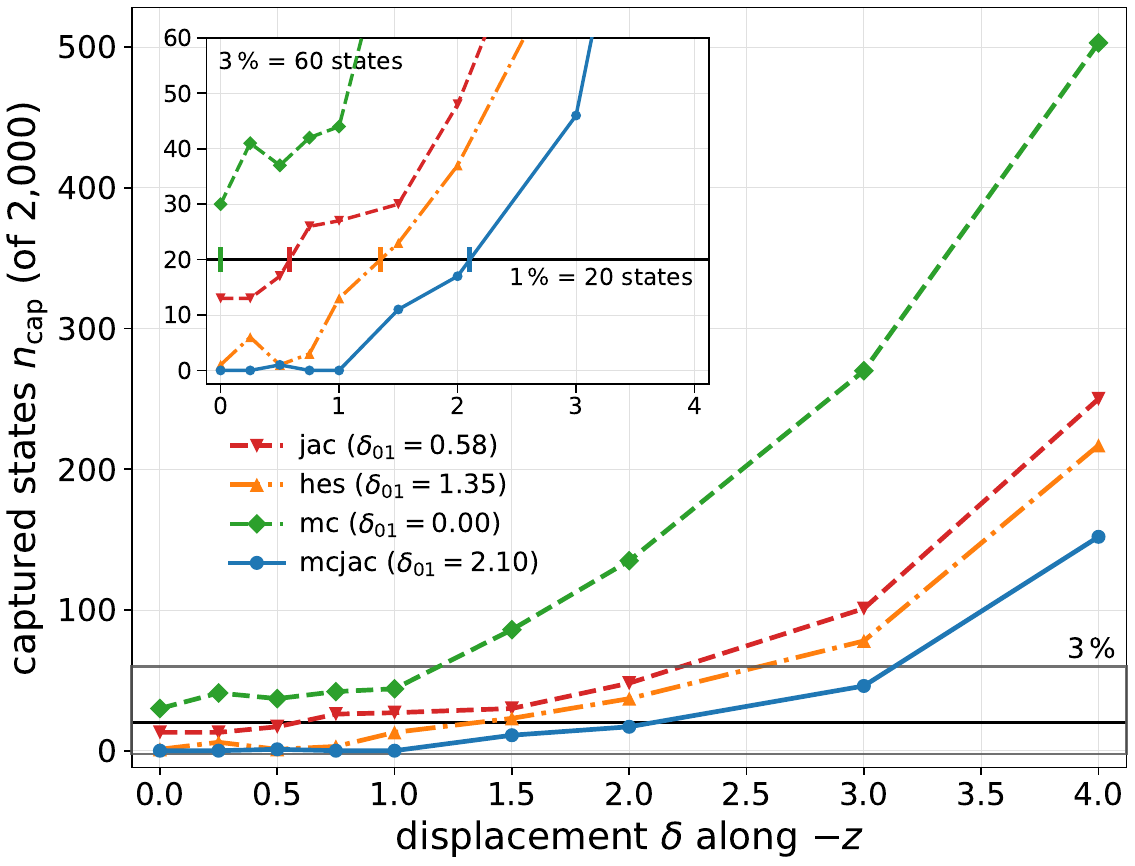}
    \caption{Capture scan for the learned Lorenz~63 models.
A set of $2{,}000$ states sampled from the true attractor is displaced by $\delta$ along the $-z$ direction and integrated forward.
The quantity $n_{\rm cap}$ denotes the number of displaced states satisfying the prescribed catastrophic-capture criterion.
}
    \figlab{capture-scan}
\end{figure}

Figure~\figref{capture-scan} shows method-dependent differences in the onset of catastrophic capture under the prescribed $-z$ perturbation. The number of captured states generally increases with $\delta$. In the zoomed-in view, the $1\%$ capture threshold increases in the order $\delta_{01}=0.00$ for \texttt{mc},
$0.58$ for \texttt{jac},
$1.35$ for \texttt{hes}, and
$2.10$ for \texttt{mcjac}.
A larger $\delta_{01}$ means that a greater displacement from the true attractor is required before $1\%$ of the probed states satisfy the capture criterion. It should not be interpreted as the geometric distance to a basin boundary. 

Within both paired comparisons, second-order supervision increases the capture threshold: from \texttt{mc} to \texttt{mcjac} and from \texttt{jac} to \texttt{hes}. This indicates lower susceptibility to spurious capture under the present scan, but does not guarantee stability across training seeds. Despite its larger threshold, \texttt{hes} produces catastrophic outliers for four of the five seeds. The still larger threshold of \texttt{mcjac} suggests an additional benefit from randomized neighborhood supervision: Jacobian matching at perturbed states constrains the learned field over a finite neighborhood of the training data rather than only on the attractor, consistent with its larger threshold and lower capture counts in the scan.
 

\subsection{Lorenz~96 Model}
\seclab{clorenz96}
The Lorenz 63 results show benefits from second-order supervision under minimal temporal supervision and across training seeds, with \texttt{mcjac} performing comparably to \texttt{hes} and showing better robustness across training seeds.
We next consider the coupled Lorenz~96 system in~\eqnref{lorenz96}, a $396$-dimensional multiscale slow--fast system with $K=36$ slow variables and $J=10$ fast variables per slow site. At the nominal forcing $F=10$ and under larger out-of-distribution forcing values, the system exhibits high-dimensional chaotic dynamics with multiple positive Lyapunov exponents.  
In this experiment, we adopt a universal differential equation framework~\citep{rackauckas2020universal} in which only a subset of the dynamics is learned. We preserve the dissipation and forcing of the slow equation \eqnref{lorenz96-slow} and replace only its quadratic nonlinear advection term with a neural network. We consider
\begin{align}
  \eqnlab{lorenz96-slow-nn}
  \DD{X_k}{t} & = \hat{f}_k\LRp{X;\theta} - X_k + F - h \overline{Y}_k,
\end{align}
where $\hat{f}_k(X;\theta)$ parameterizes the quadratic nonlinear advection operator among the slow variables. The linear damping term $-X_k$, the constant forcing $F$, and the true coupling to the fast variable $\overline{Y}_k$ are kept exact. Thus, the learning task is not to identify the full vector field, but to recover a structured nonlinear component within a strongly chaotic multiscale system.

This experiment is a controlled analogue of closure and UDE surrogate learning, rather than equation discovery from observations alone. The full two-scale model provides offline trajectory and derivative targets while only a structured nonlinear component is learned, allowing us to compare first- and second-order supervision and to test whether randomized Jacobian matching achieves second-order consistency without
reference Hessians.

The Hessian of the true quadratic interaction $-X_{k-1}(X_{k-2}-X_{k+1})$ is sparse, local, and state-independent, with nonzero entries of $\LRc{-1,+1}$. The interaction itself is translation-equivariant under cyclic shifts of the slow index $k$. Exploiting both properties, we take $\hat{f}_k$ to be a shared local multilayer perceptron (MLP), applied identically at every slow site. This locality restricts the second-derivative coupling to a sparse banded stencil, reducing the number of structurally nonzero Hessian entries from $\mc{O}(K^2)$ to $\mc{O}((2W+1)^2)$. In this study, we choose $W=2$, so that $\hat{f}_k=\hat{f}_k\LRp{X_{k-2},X_{k-1}, X_k, X_{k+1}, X_{k+2};\theta}$, where the local mapping is implemented by an MLP with layer widths $[5, 64, 64, 1]$ and \texttt{tanh} activations. This architecture directly embeds the locality and symmetry structure of Lorenz~96 into the neural parameterization, making second-order supervision computationally tractable. 

A single trajectory is generated from a random initial condition---slow variables perturbed around $F$, fast variables initialized near zero---by integrating for $200$ warmup steps ($t=1$) to drive the system onto the attractor. We verified that both the slow- and fast-variable energies reach statistically stationary levels within this interval. The resulting trajectory is used as training data. A second trajectory generated from an independently sampled initial condition with the same warmup procedure is used for validation, and all evaluation metrics are computed on this independent validation trajectory.
Training uses the TSIT5 integrator~\citep{tsitouras2011runge} with timestep $\Delta t=0.005$ over $t\in[0,500]$, batch size $n_b=128$, rollout length $m=2$, and $10{,}000$ epochs. Unlike the Lorenz~63 experiments, all Lorenz~96 rollouts and corresponding analyses were performed in double precision. Although double precision was not required for integration stability, we used it throughout the Lorenz~96 experiments for numerical consistency because single precision produced a systematic upward bias in the estimated Lyapunov spectrum.


Unlike the Lorenz~63 experiments, here the learning problem combines partial learning of a structured nonlinear component with a strongly translation-equivariant architecture. These inductive biases reduce the effective complexity of the approximation problem, so within the training regime the performance gap between supervision methods is small.
At the same time, all models are trained at $F=10$ and evaluated up to the more chaotic out-of-distribution regime $F=20$. We first compare all five methods at two representative forcing values, $F=10$ and $F=20$, including short- and long-time trajectories, Lyapunov spectra, curvature statistics, invariant measures, and Jacobian recovery. We then examine how the methods progressively separate as the forcing parameter is swept between these two regimes.


\begin{figure}
    \centering    
    \includegraphics[width=0.9\linewidth]{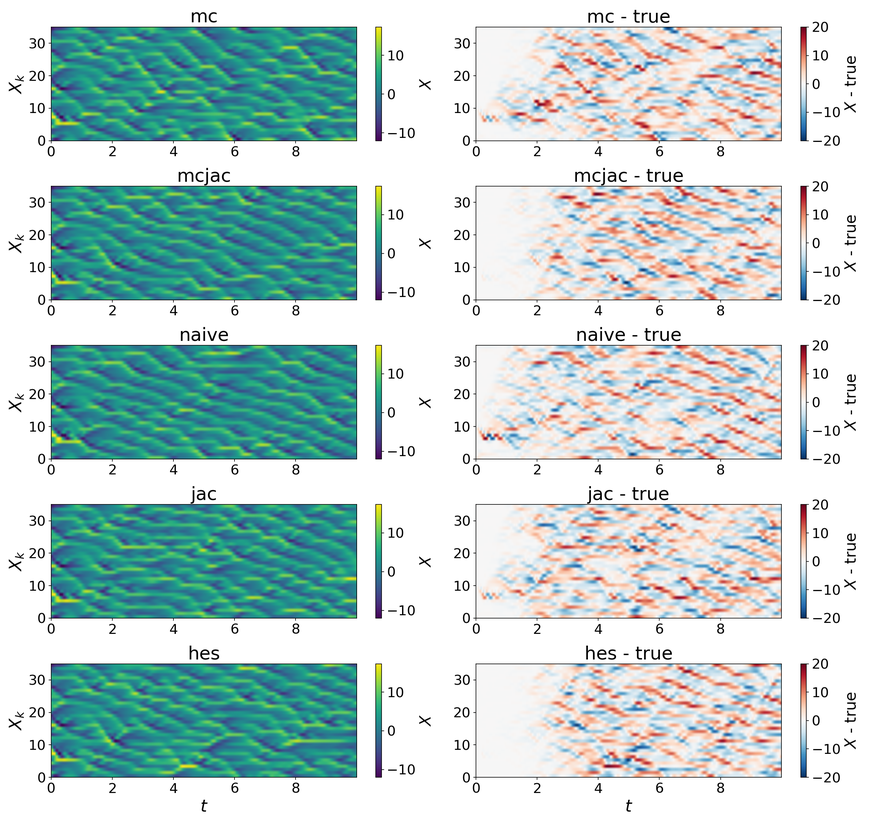}
    \caption{Hovm\"oller diagrams of the slow variables $X_k$ for $t\in[0,10]$ at $F=10$. Predicted trajectories from each learned model (left) and differences from the true trajectory (right) are shown. All five methods reproduce the characteristic wave-like spatiotemporal structures of the system with comparable short-term accuracy. The second-order supervised methods (\texttt{mcjac} and \texttt{hes}) also exhibit slightly smaller short-time residuals during the early-time interval ($t\le 2$).}
    \figlab{l96-hov-F10-t10}
\end{figure}

\begin{figure}
    \centering 
    \includegraphics[width=0.9\linewidth]{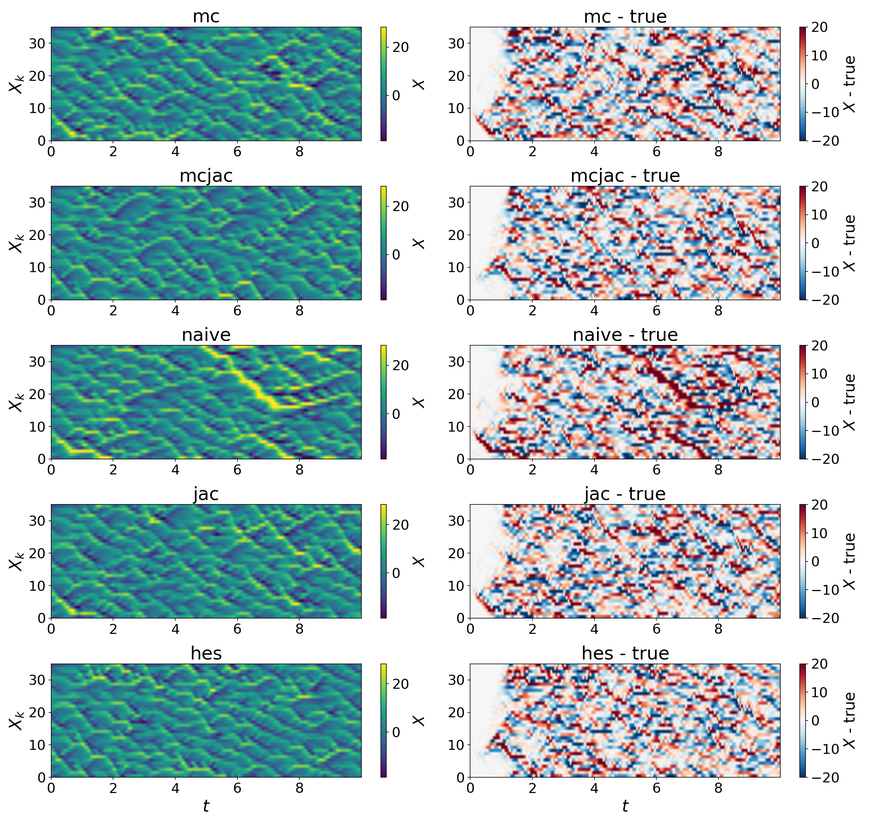}
    \caption{Hovm\"oller diagrams of the slow variables $X_k$ for $t\in[0,10]$ at $F=20$. All methods retain similar spatiotemporal structures. However, the \texttt{naive} model exhibits stronger localized high-amplitude regions.}
    \figlab{hov-F20-t10}
\end{figure}

\begin{figure}
    \centering    \includegraphics[width=0.9\linewidth]{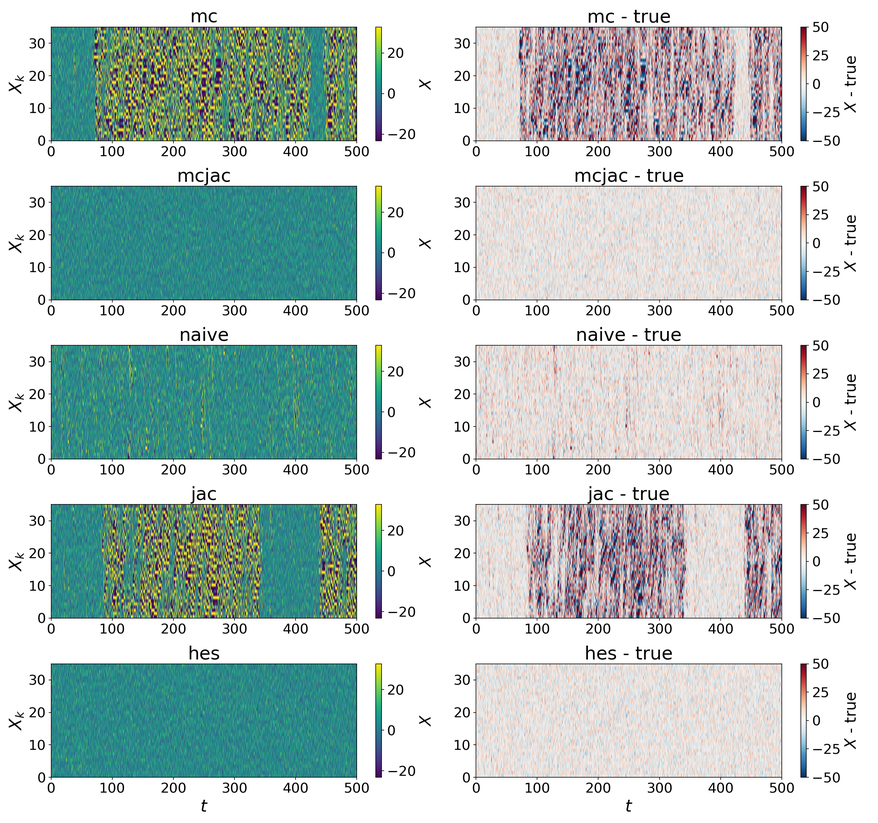}
    \caption{Long-horizon Hovm\"oller diagrams of the slow variables $X_k$ for $t\in[0,500]$ at $F=20$. \texttt{mcjac} and \texttt{hes} show very good agreement with the true dynamics over the entire rollout. The \texttt{naive} model remains bounded but shows intermittent localized high-amplitude bursts. The \texttt{mc} model transitions near $t\approx70$ into a spurious high-amplitude attractor-like regime for most of the rollout. The \texttt{jac} model similarly enters a spurious regime near $t\approx80$, briefly recovers over $t\approx350\text{--}430$, and later transitions back.}
    \figlab{hov-F20}
\end{figure}

We begin with the space--time evolution of the slow variables. Figure~\figref{l96-hov-F10-t10} shows short-window Hovm\"oller diagrams at $F=10$, while Figures~\figref{hov-F20-t10} and~\figref{hov-F20} show the corresponding short- and long-window results at $F=20$. At $F=10$, the system is already spatiotemporally chaotic with leading Lyapunov exponent $\lambda_1\approx6$ (Figure~\figref{l96-lyap-F10}), yet all five learned models remain visually similar on the short window. The second-order methods (\texttt{mcjac} and \texttt{hes}) nevertheless exhibit slightly smaller short-time residuals during the early interval $t\le2$.
At $F=20$, the true system enters a more strongly chaotic regime with $\lambda_1\approx17.5$ (Figure~\figref{l96-lyap-F20}). On the short window (Figure~\figref{hov-F20-t10}), all methods still appear reasonable, although the \texttt{naive} model develops stronger localized high-amplitude regions. The long-window rollout (Figure~\figref{hov-F20}), however, reveals a much clearer separation between the methods. The second-order methods, \texttt{mcjac} and \texttt{hes}, remain in close agreement with the true dynamics throughout the rollout. The \texttt{naive} model stays bounded but exhibits intermittent high-amplitude bursts. In contrast, \texttt{mc} transitions near $t\approx70$ into a spurious high-amplitude attractor-like regime, while \texttt{jac} undergoes a similar transition near $t\approx80$, briefly recovering over $t\approx350\text{--}430$ before transitioning again. Thus, in this
high-$F$ extrapolation regime, the two methods with second-order supervision are more robust than their first-order counterparts.

The two methods that develop sustained spurious regimes, \texttt{mc} and \texttt{jac}, lack second-order supervision. To separate the effect of forcing level from distance to the training condition, we repeated the experiment in reverse, training at $F=20$ and evaluating at lower $F$ (Appendix Figure~\figref{l96-reverse-F}). Derivative errors remained larger at larger $F$, even though $F=20$ was now the training condition. The increased difficulty is therefore associated with the forcing level rather than extrapolation distance alone. The reverse sweep did not show a systematic increase in the advantage of second-order supervision. We take the original $F=20$ results as evidence of improved robustness in this strongly chaotic extrapolation regime, but not of a growing benefit with chaoticity.

\begin{figure}
    \centering  
    \begin{subfigure}[b]{0.9\linewidth}
    \centering
    \includegraphics[width=\linewidth]{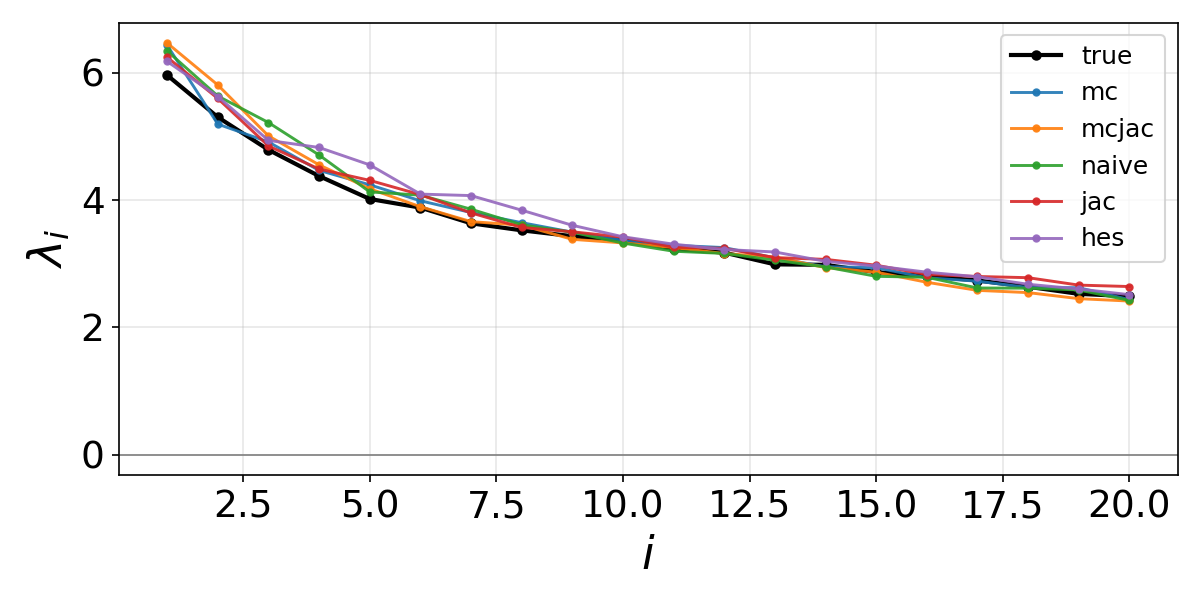}
        \caption{$F=10$}
        \figlab{l96-lyap-F10}
    \end{subfigure}
    \hfill
    \begin{subfigure}[b]{0.9\linewidth}
    \centering
    \includegraphics[width=\linewidth]{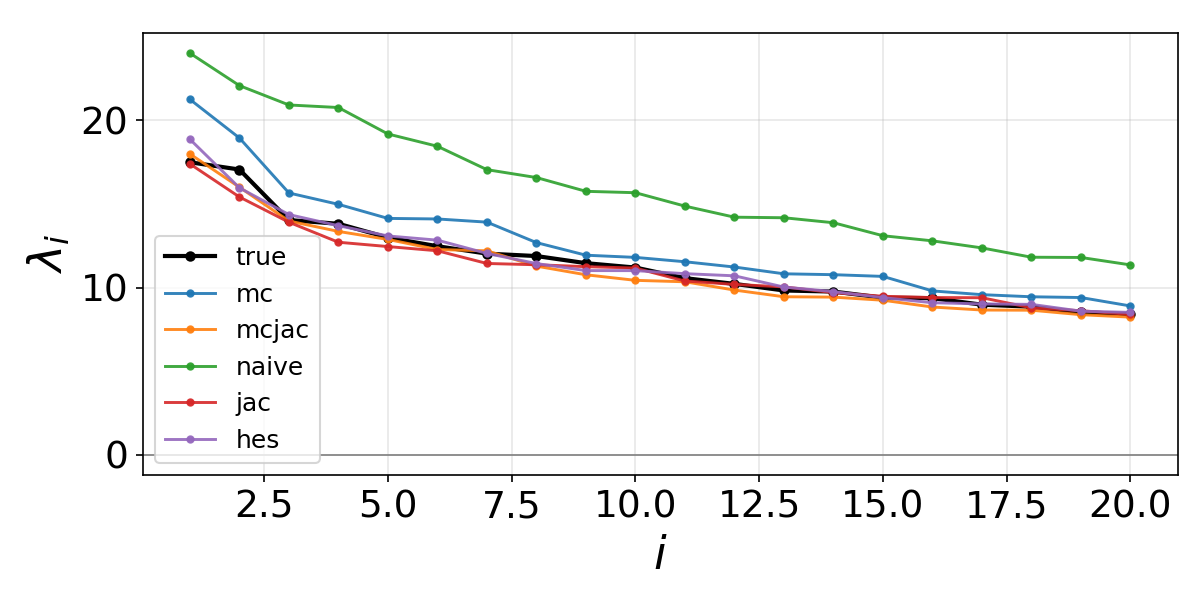}
        \caption{$F=20$}
        \figlab{l96-lyap-F20}
    \end{subfigure}
    \figlab{l96-lyapunov}
    \caption{Top-20 Lyapunov spectra of the learned models. (a) At $F=10$, all methods remain in good agreement with the true spectrum. (b) At $F=20$, clear separation emerges. \texttt{jac}, \texttt{mcjac} and \texttt{hes} remain closer to the true spectrum across the leading modes, while \texttt{mc} moderately overestimates and \texttt{naive} strongly overestimates the positive Lyapunov exponents.}
\end{figure}

The Lyapunov spectrum shows a similar contrast. At $F=10$, all methods agree closely with the true spectrum (Figure~\figref{l96-lyap-F10}). At $F=20$, the methods separate clearly. \texttt{jac}, \texttt{mcjac}, and \texttt{hes} remain closer to the true spectrum across the leading modes, whereas \texttt{mc} moderately overestimates and \texttt{naive} strongly overestimates the positive Lyapunov exponents (Figure~\figref{l96-lyap-F20}).
These spectral errors do not align with the long-time trajectory behavior in Figure~\figref{hov-F20}. Despite strongly overestimating the spectrum, \texttt{naive} remains on a bounded attractor, whereas \texttt{jac} reproduces the leading spectrum relatively accurately yet still intermittently transitions into a spurious high-amplitude regime.
Lyapunov-spectrum accuracy is therefore not a sufficient diagnostic for long-time fidelity. 

The Lyapunov spectrum characterizes tangent-space dynamics along the realized trajectory. Changes in the geometry or location of the attractor can therefore affect the spectrum by changing where in state space the local Jacobians are sampled.

We quantify this dependence directly at $F=20$. Evaluating the learned Jacobian along the true trajectory rather than its own rollout reduces the top-20 spectral error $\varepsilon_{\lambda}$ by $88.6$--$99.7\%$ across all five methods (Appendix~\secref{l96-spectrum-decomposition}). Thus, much of the spectral discrepancy is associated with the trajectory along which the local Jacobians are sampled.

We next examine whether the curvature-length diagnostic reflects differences in the learned vector field or differences in the states sampled by the
rollout. We consider
\[
\Lcurv(X,g)
=
\frac{\|\nabla g(X)\|_{\mathrm F}}
     {\|\nabla^2 g(X)\|_{\mathrm F}},
\qquad
M(g,\mu)
=
\operatorname*{median}_{X\sim\mu}
\Lcurv(X,g).
\]
Here, \(\mu_f\) and \(\mu_{\hat f}\) denote the empirical occupation measures of the true and learned rollouts over \(t\in[10,500]\) after burn-in. 
We estimate \(M(g,\mu)\) at the same 200 equally spaced times along each rollout, with consecutive samples \(2.46\) time units apart. For the true trajectory, \(\mu_f\) serves as a finite-time approximation to the invariant measure of \(f\). For each learned model, \(\mu_{\hat f}\) describes the states visited over the observation window, without assuming stationarity. Because $M(g,\mu)$ depends on both the vector field $g$ and the occupation
measure $\mu$, a change in $M$ can arise from either the learned field or the states visited by its rollout.

\begin{table}[t]
\centering
\caption{
Field-state decomposition of the curvature length scale at \(F=20\). Each entry is the median over 200 sampled states. The true-field reference is \(M(f,\mu_f)=0.3685\). 
}
\label{tab:l96-field-state-decomposition}
\begin{tabular}{c|ccc}
\hline
method & $M(\hat f,\mu_f)$ & $M(f,\mu_{\hat f})$ & $M(\hat f,\mu_{\hat f})$ \\
\hline
\texttt{mc}    & 0.3683 & 0.9339 & 0.9279 \\
\texttt{mcjac} & 0.3683 & 0.3673 & 0.3672 \\
\texttt{naive} & 0.3682 & 0.4160 & 0.4153 \\
\texttt{jac}   & 0.3683 & 0.8248 & 0.8207 \\
\texttt{hes}   & 0.3684 & 0.3692 & 0.3691 \\
\hline
\end{tabular}
\end{table}

To separate these effects, we perform a field-state decomposition at \(F=20\). For each method, we evaluate the learned field on states from the true attractor, the true field on states from the learned rollout, and the learned field on its own rollout states. The resulting medians are reported in Table~\tabref{l96-field-state-decomposition}. 
The true-field reference is $M(f,\mu_f)=0.3685$.
Evaluating the learned fields on the reference-trajectory states gives $M(\hat f,\mu_f)=0.3682\text{--}0.3684$, which agree with the reference value. Evaluating the true field on the learned-rollout states gives
$
M(f,\mu_{\hat f})
\simeq
M(\hat f,\mu_{\hat f}).
$
For \texttt{mc}, the two values are $0.9339$ and $0.9279$; for \texttt{jac}, they are $0.8248$ and $0.8207$. 
At fixed rollout states, replacing the learned field by the true field changes the median by only $0.0060$ for \texttt{mc} and $0.0041$ for \texttt{jac}. 
At fixed field $\hat f$, changing the occupation measure from $\mu_f$ to $\mu_{\hat f}$ changes the median by approximately $0.56$ and $0.45$, respectively. 
The shift in \(\Lcurv\) is driven mainly by the states occupied by the learned rollouts, not by the difference between \(f\) and \(\hat f\) at those states. 
The corresponding finite-window distributions are shown in Appendix~\secref{lcurv}. For \texttt{mc} and \texttt{jac}, the shift in state-space occupation appears as a second lobe in the distribution.

However, $\Lcurv$ is nearly insensitive to errors in
the learned slow-variable Hessian. Because the Frobenius norm sums over output components, the Hessian norm splits by output block, 
$
\|\nabla^2 f\|_F^2
=
\|\nabla^2 f_{\text{slow}}\|_F^2+
\|\nabla^2 f_{\text{fast}}\|_F^2.
$
Since the slow--fast coupling is linear, each block is determined solely by its quadratic advection term: $\|\nabla^2 f_{\rm slow}\|_F=\sqrt{4K}=12.00$ and
$\|\nabla^2 f_{\rm fast}\|_F=cJ\sqrt{4JK}=3794.73$. 
The learned slow-variable component contributes only
$12^2/3794.75^2\approx 0.001\%$ of the squared full Hessian norm. 
We use the relative Hessian error for the slow variable,
$$
\varepsilon_{H}
=\frac{1}{N}
\sum_{i=1}^{N}
\frac{
\|\nabla^2\hat f_{\mathrm{slow}}(X_i)
-\nabla^2 f_{\mathrm{slow}}(X_i)\|_F
}{
\|\nabla^2 f_{\mathrm{slow}}(X_i)\|_F
},
$$
where $X_i\sim\mu_f$ and all methods are evaluated at the same $N=50$ states sampled from the true trajectory.

\begin{table}[t]
\centering
\caption{
Mean relative Hessian error of the learned slow variable at $F=20$, averaged over the same $50$ states sampled from the true trajectory.
}
\label{tab:l96-hessian-error}
\begin{tabular}{lccccc}
\hline
method
& \texttt{hes}
& \texttt{mcjac}
& \texttt{jac}
& \texttt{mc}
& \texttt{naive}\\
\hline
$\varepsilon_{H}$
& 0.485
& 0.649
& 1.163
& 1.510
& 1.619\\
\hline
\end{tabular}
\end{table}

Table~\tabref{l96-hessian-error} shows that second-order supervision lowers the Hessian error in both paired comparisons: from $1.510$ for \texttt{mc} to $0.649$ for \texttt{mcjac}, and from $1.163$ for \texttt{jac} to $0.485$ for \texttt{hes}. The \texttt{naive} and \texttt{mc} models, neither of which receives second-order supervision, have comparable errors of $1.619$ and $1.510$. Their long-time behaviors differ: \texttt{mc} leaves the amplitude range of the true attractor at $t=69.5$ and stays outside for most of the run, whereas \texttt{naive} exceeds it only in bursts and always returns. The slow-variable Hessian error alone does not explain the high-amplitude regime switching.

We next turn from the geometric structure of the learned vector field to the statistical occupation of the attractor. The one-dimensional marginal density of $X$ captures where trajectories spend most of their time on the attractor. At $F=10$, all methods agree closely with the true density in Figure~\figref{l96-pdf-F10}. At $F=20$, \texttt{mcjac} and \texttt{hes} remain close to the true density, whereas \texttt{mc} and \texttt{jac} produce broadened distributions with heavy tails extending to values near $\pm100$. The \texttt{naive} method reproduces the marginal distributions reasonably well at the representative sites despite overestimating the Lyapunov spectrum (Figure~\figref{l96-lyap-F20}).
Marginal densities show where trajectories spend time, but they do not show whether the learned vector field is accurate. As the curvature analysis above demonstrates, \texttt{naive} distorts the underlying second-order structure of the dynamics even though its marginal density remains close to the truth. Because \texttt{naive} is trained only on state values, it can reproduce the occupation statistics while failing to recover the local derivative structure that governs perturbation growth.


\begin{figure}
    \centering  
    \begin{subfigure}[b]{0.9\linewidth}
    \centering
    \includegraphics[width=\linewidth]{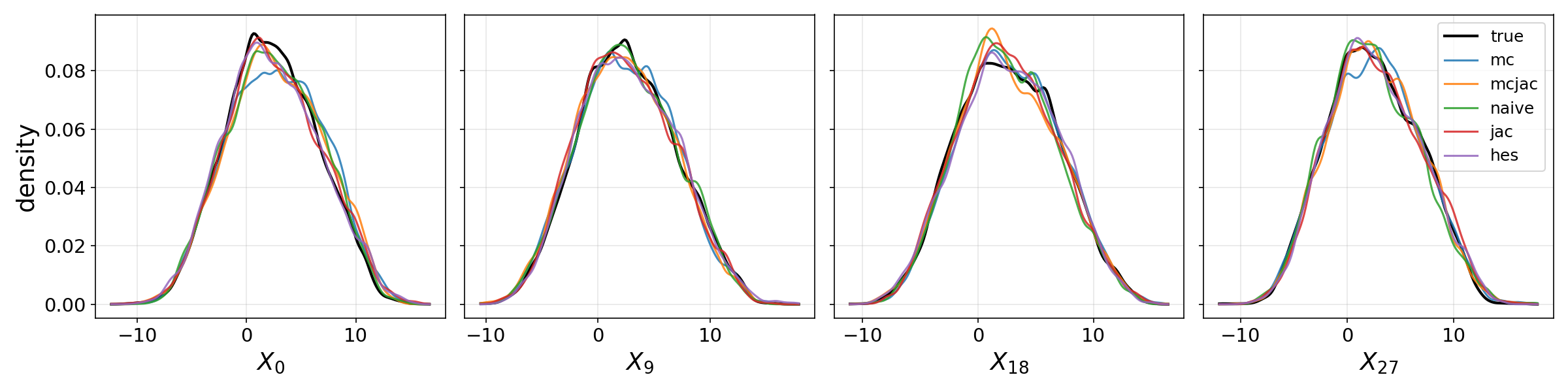}
        \caption{$F=10$}
        \figlab{l96-pdf-F10}
    \end{subfigure}
    \hfill
    \begin{subfigure}[b]{0.9\linewidth}
    \centering
    \includegraphics[width=\linewidth]{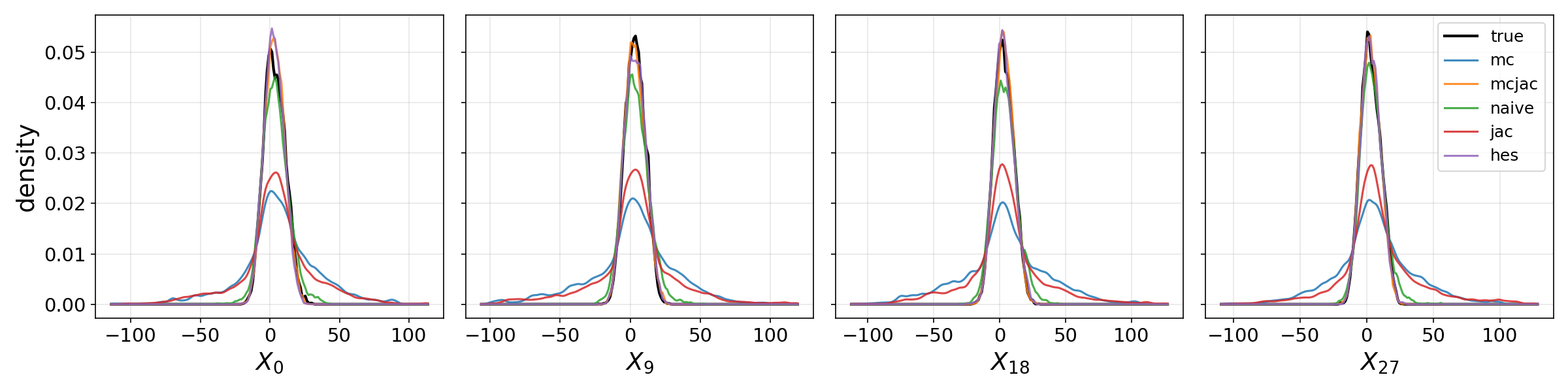}
        \caption{$F=20$}
        \figlab{l96-pdf-F20}
    \end{subfigure}
    \figlab{l96-pdf}
    \caption{Marginal densities of $X$ at four representative sites ($X_0$, $X_9$, $X_{18}$, and $X_{27}$). (a) At $F=10$, all methods overlap closely with the true density (black). (b) At $F=20$, \texttt{mcjac} and \texttt{hes} remain in close agreement with the true density, \texttt{mc} and \texttt{jac} produce broadened marginals with heavy tails, and \texttt{naive} remains comparatively close to the true density at these representative sites.     
    }
\end{figure}

\begin{figure}
    \centering  
    \begin{subfigure}[b]{0.9\linewidth}
    \centering
    \includegraphics[width=\linewidth]{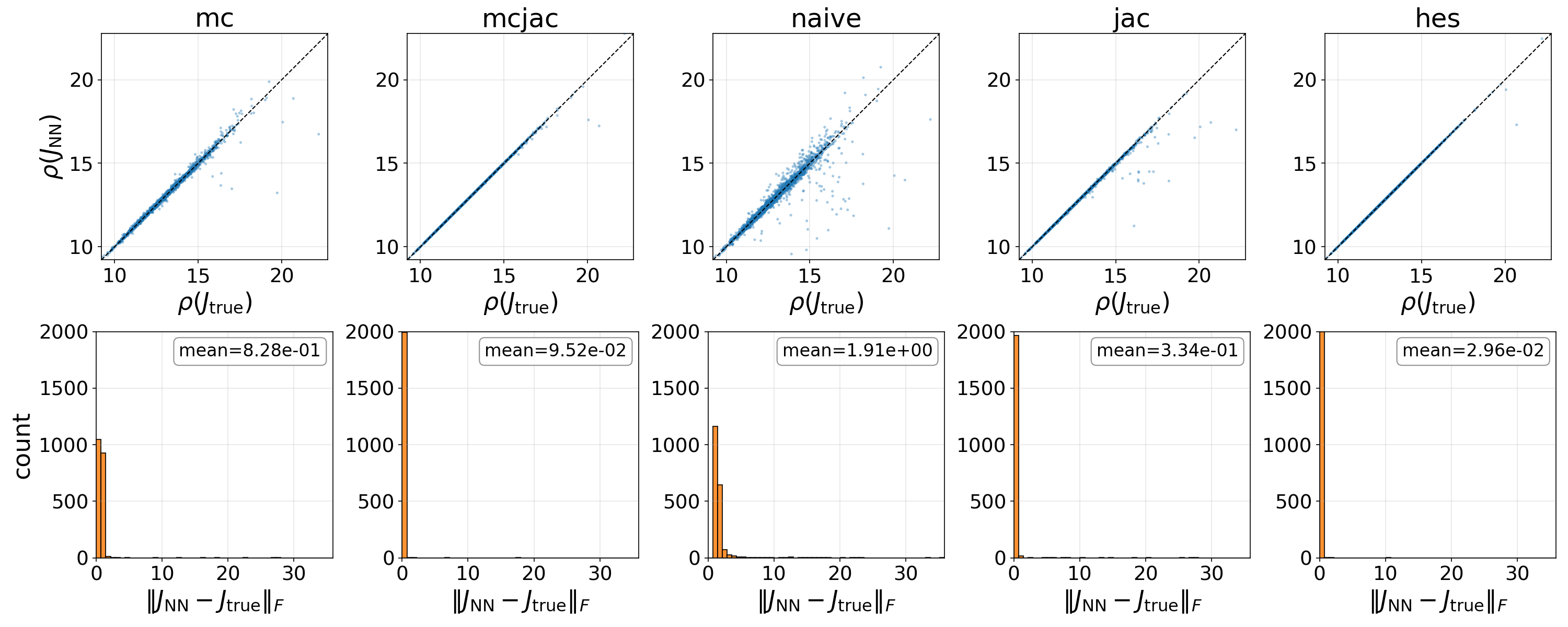}
        \caption{$F=10$}
        \figlab{l96-jac-F10}
    \end{subfigure}
    \hfill
    \begin{subfigure}[b]{0.9\linewidth}
    \centering
    \includegraphics[width=\linewidth]{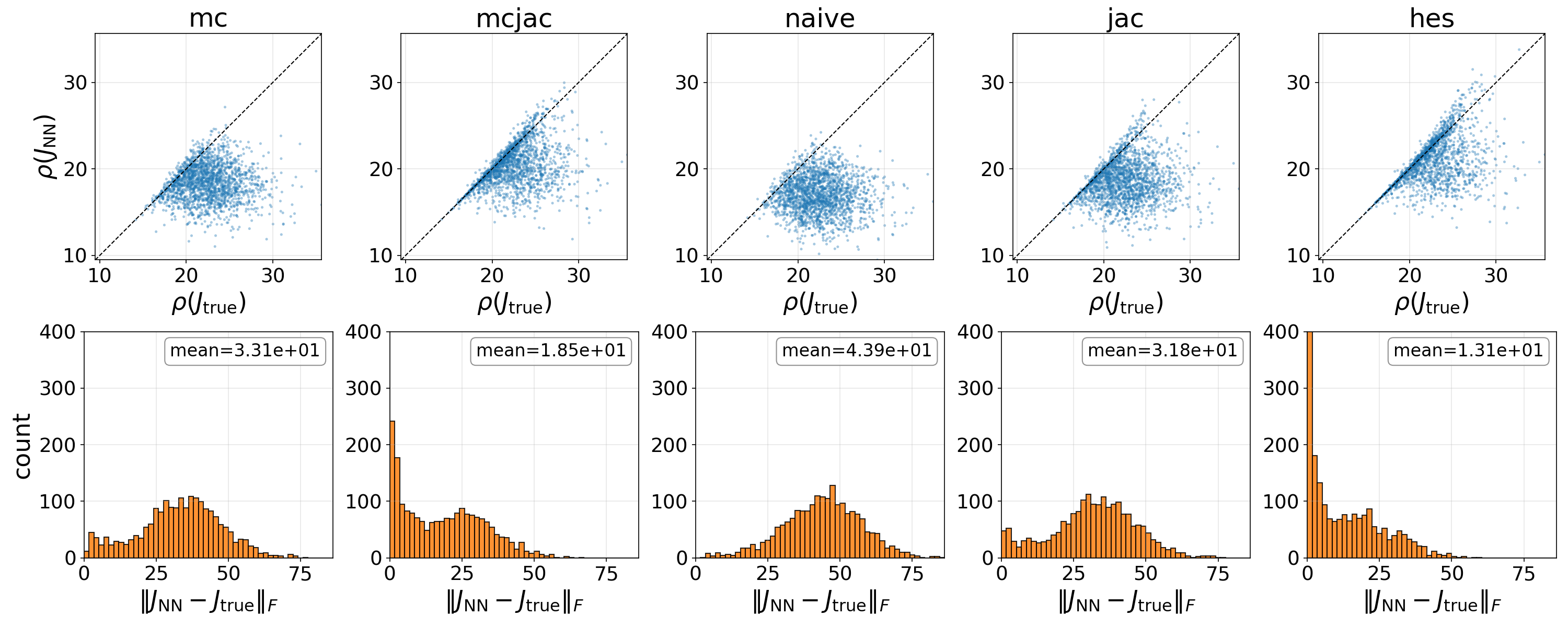}
        \caption{$F=20$}
        \figlab{l96-jac-F20}
    \end{subfigure}
    \caption{Jacobian recovery at (a) $F=10$ and (b) $F=20$. Scatter plots of the learned versus true Jacobian spectral radius (top) and histograms of the Frobenius-norm Jacobian error (bottom) are shown. At $F=10$, all methods are broadly aligned with the diagonal, but the Frobenius means differ by nearly two orders of magnitude ($0.03$ for \texttt{hes} to $1.91$ for \texttt{naive}). At $F=20$, \texttt{hes} and \texttt{mcjac} remain closest to the diagonal, while \texttt{mc}, \texttt{jac}, and especially \texttt{naive} show increasingly distorted Jacobian spectra. In the Frobenius-error distributions, \texttt{hes} is most concentrated near zero, \texttt{mcjac} retains a strong low-error peak with a broader secondary component, \texttt{mc} and \texttt{jac} shift to broad intermediate-error distributions, and \texttt{naive} has the largest errors overall.}
    \figlab{l96-jac}
\end{figure}

The next diagnostic examines the learned Jacobian.  
At $F=10$ (Figure~\figref{l96-jac-F10}), all methods recover the qualitative structure of the true Jacobian, but the mean Frobenius errors differ, ranging from $0.03$ for \texttt{hes} and $0.095$ for \texttt{mcjac} to $0.33$ for \texttt{jac}, $0.83$ for \texttt{mc}, and $1.91$ for \texttt{naive}. The same ordering is observed for $F=20$ (Figure~\figref{l96-jac-F20}), where the larger absolute errors reflect the increased Jacobian norm at higher forcing.
In both regimes, the smallest Jacobian errors are achieved not by \texttt{jac}, which explicitly supervises the Jacobian, but by the second-order methods \texttt{hes} and \texttt{mcjac}. The histograms sharpen this distinction: \texttt{hes} is strongly concentrated near zero error, \texttt{mcjac} retains a dominant low-error peak with a weaker secondary lobe, whereas \texttt{mc}, \texttt{jac}, and \texttt{naive} produce much broader distributions centered farther from zero. Thus, the methods incorporating second-order information yield larger fractions of trajectory states with near-exact Jacobian recovery than the first-order methods.

The curvature analysis above explains this pattern. First-order supervision constrains the learned vector field at the supervised states but leaves its spatial variation between those states underconstrained. By constraining the curvature, second-order supervision indirectly regularizes how the Jacobian varies across phase space, yielding a smoother and globally more faithful first-order structure. Another factor is that the local MLP architecture already encodes the translation equivariance and local interaction structure of the quadratic transport term, reducing the remaining ambiguity that supervision must resolve. As a result, in the moderately chaotic regime ($F=10$), even the lower-order methods reproduce many macroscopic statistical properties despite noticeable differences in local derivative accuracy. The distinction between supervision strategies becomes clearer in the more chaotic out-of-distribution regime at $F=20$.

\begin{figure}
    \centering    \includegraphics[width=0.9\linewidth]{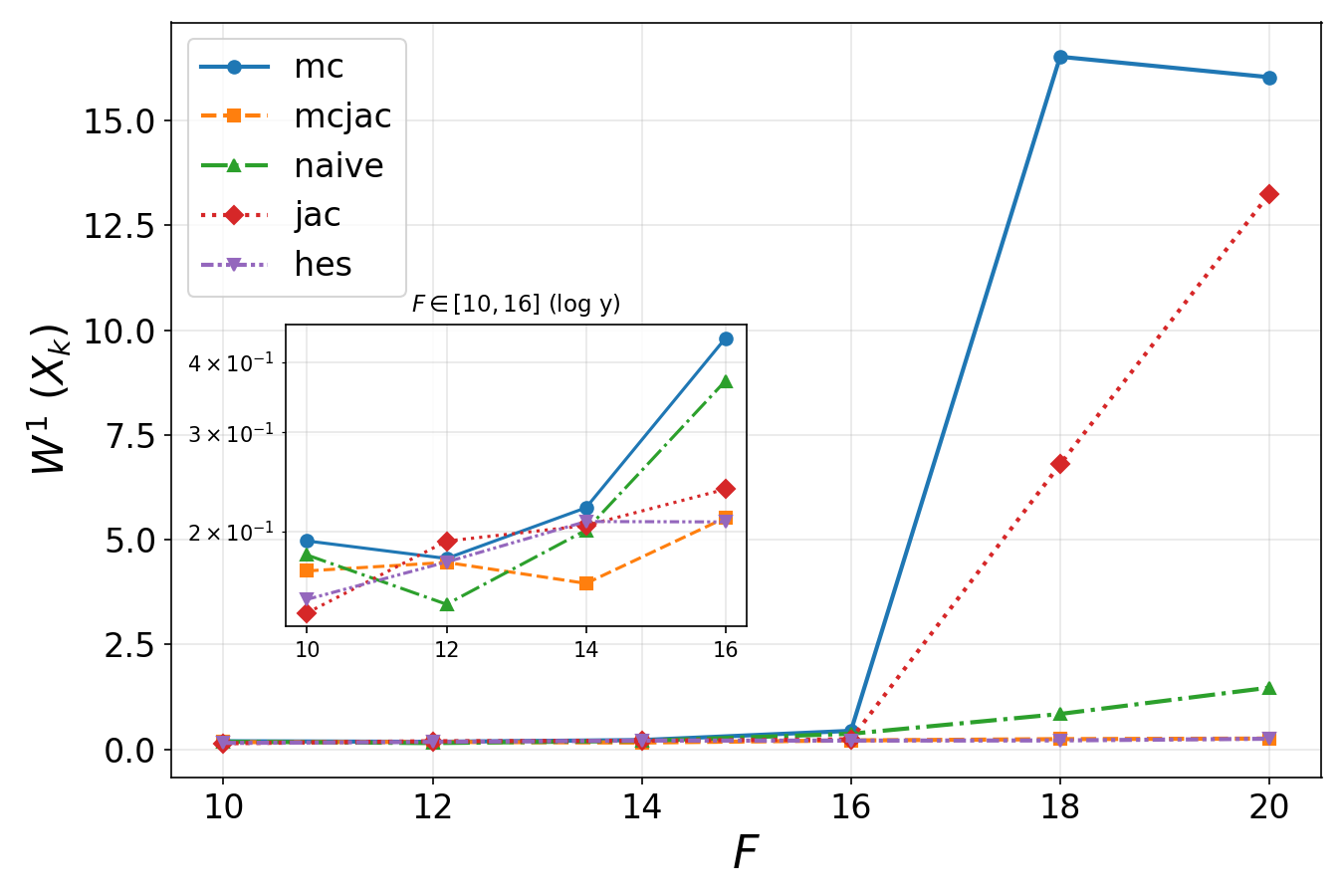}
    \caption{Site-averaged Wasserstein-1 distance between learned and true marginal distributions of $X_k$ as a function of forcing $F$. The inset shows the mild regime $F\in[10,16]$. For $F\leq 16$ all five methods produce $W^1\lesssim 0.4$ and remain essentially indistinguishable. Between $F=16$ and $F=18$, \texttt{mc} and \texttt{jac} undergo an abrupt increase of one to two orders of magnitude, \texttt{naive} degrades moderately, and \texttt{mcjac} and \texttt{hes} remain close to the true marginals across the entire sweep.}
    \figlab{l96-wd}
\end{figure}


The analysis presented so far compares the five methods at two representative forcing values, $F=10$ and $F=20$. To characterize the transition between these regimes, we now sweep $F$ continuously from $10$ to $20$ with all other settings fixed and track two complementary metrics: the site-averaged Wasserstein-1 distance $W^1$ between the learned and true marginals (Figure~\figref{l96-wd}), and the mean squared error of the top-20 Lyapunov exponents (Table~\tabref{l96-le-wc}).
For $F\le16$, all five methods are close to the reference, with $W^1\lesssim0.4$ and Lyapunov MSEs below $0.5$. A clear separation
appears between $F=16$ and $F=18$. In $W^1$, \texttt{mc} and \texttt{jac}
increase by one to two orders of magnitude, whereas \texttt{naive} degrades
more gradually. This is the only case in our experiments where \texttt{naive}
shows less deterioration in $W^1$ than derivative-supervised methods outside
the training regime, and this occurs only relative to \texttt{mc} and
\texttt{jac}. The Lyapunov MSE shows a different ordering: at $F=18$,
\texttt{naive} reaches $10.0$ and \texttt{jac} $2.7$, while \texttt{mcjac}
and \texttt{hes} remain more accurate. The relatively mild degradation of \texttt{naive} in $W^1$ is not observed in its other dynamical statistics.
At $F=20$, \texttt{jac}, \texttt{mcjac}, and \texttt{hes} achieve comparable spectral errors near $2\times10^{-1}$, while \texttt{mc} and \texttt{naive} remain one to two orders of magnitude worse. The marginal distributions give a different ordering: only \texttt{mcjac} and \texttt{hes} remain closely aligned with the reference throughout the forcing sweep, whereas \texttt{jac} matches the Lyapunov spectrum at $F=20$ while distorting the invariant measure.
We also observe that \texttt{mcjac} achieves Hessian-level accuracy at first-order cost (Table~\tabref{l96-le-wc}).

\begin{table}[ht]
\centering
\caption{
MSE of the top-20 Lyapunov exponents and training wall-clock time for each
method across forcing $F$. The errors are evaluated at
$F\in\{10,16,18,20\}$, while wall-clock times are reported in seconds.
Cell shading indicates relative magnitude within each column on a logarithmic
scale, with darker shading corresponding to smaller values. Blue shading is
used for Lyapunov-exponent errors and gray shading for training time.
}
\begin{tabular}{c|cccc|c}
\hline
method & $F=10$ & $F=16$ & $F=18$ & $F=20$ & wc (s) \\
\hline

\texttt{mc}
& \cellcolor{heat!70} {\bf 2.051E-02}
& \cellcolor{heat!22} 3.950E-01
& \cellcolor{heat!56} 4.733E-01
& \cellcolor{heat!44} 1.935E+00
& \cellcolor{timeheat!30} 147.6 \\

\texttt{mcjac}
& \cellcolor{heat!38} 3.634E-02
& \cellcolor{heat!15} 4.843E-01
& \cellcolor{heat!70} {\bf 1.751E-01}
& \cellcolor{heat!70} {\bf 2.192E-01}
& \cellcolor{timeheat!28} 157.6 \\

\texttt{naive}
& \cellcolor{heat!41} 3.485E-02
& \cellcolor{heat!48} 1.899E-01
& \cellcolor{heat!15} 1.001E+01
& \cellcolor{heat!15} 2.294E+01
& \cellcolor{timeheat!70} {\bf 30.3} \\

\texttt{jac}
& \cellcolor{heat!65} 2.247E-02
& \cellcolor{heat!70} {\bf 1.023E-01}
& \cellcolor{heat!33} 2.748E+00
& \cellcolor{heat!67} 2.807E-01
& \cellcolor{timeheat!30} 146.9 \\

\texttt{hes}
& \cellcolor{heat!15} 5.528E-02
& \cellcolor{heat!37} 2.593E-01
& \cellcolor{heat!51} 7.213E-01
& \cellcolor{heat!70} 2.217E-01
& \cellcolor{timeheat!15} 268.3 \\

\hline
\end{tabular}
\tablab{l96-le-wc}
\end{table}





\section{Conclusion}
\seclab{conclusion}

Trajectory and Jacobian matching leave the higher-order geometry of a learned vector field underconstrained. On both Lorenz~63 and coupled Lorenz~96, models trained with these objectives may reproduce short-time trajectories and local tangent behavior while developing spurious attractors, distorted invariant measures or Lyapunov spectra, and unreliable long-time dynamics.

We proposed randomized Jacobian matching, which extends model-constrained training to implicit second-order supervision by matching Jacobians at perturbed states. The resulting loss contains a leading-order Hessian-mismatch penalty without computing Hessian tensors, making second-order supervision feasible in higher dimensions.

On Lorenz~63 under minimal temporal supervision, adding second-order information reduced the invariant-measure error and Lyapunov-spectrum MSE in both paired comparisons and recovered the constant Hessian norm of the true bilinear field. Increasing the rollout length to $m=2$ made the first-order methods competitive in ensemble-averaged metrics. 

The five-seed experiment showed that curvature supervision at sampled states does not eliminate catastrophic outliers. Explicit Hessian matching produced Lyapunov outliers for four of five training seeds, whereas randomized Jacobian matching produced none among $5{,}000$ on-attractor rollouts. A representative outlier passed through the saddle region near the origin before entering a compact spurious attracting set. The directional capture threshold increased from \texttt{mc} to \texttt{mcjac} and from \texttt{jac} to \texttt{hes}. Randomized Jacobian matching had the largest threshold. Its advantage over explicit Hessian matching may reflect the neighborhood coverage provided by the perturbed inputs.

In coupled Lorenz~96, \texttt{mc} and \texttt{jac} entered spurious high-amplitude regimes under out-of-distribution forcing, whereas \texttt{mcjac} and \texttt{hes} agreed with the reference across the forcing sweep. At $F=20$, \texttt{jac} matched the Lyapunov spectrum but distorted the invariant measure, showing that the two diagnostics are complementary. Randomized Jacobian matching achieved long-time accuracy comparable to that of explicit Hessian matching at about $40\%$ lower training cost.

The coupled Lorenz~96 experiments used a structured universal differential equation setting in which only the quadratic advection term was learned. Whether the method extends to fully learned high-dimensional systems and spatiotemporally chaotic partial differential equations remains open. We selected the perturbation scale and loss weights separately for each benchmark but did not systematically study their dependence on the integration step and rollout length. The Lorenz~63 robustness analysis was limited to five training seeds, and the capture scan considered a single perturbation direction. Additional seeds and directions are needed to estimate failure rates and test whether neighborhood coverage explains the difference between randomized Jacobian and explicit Hessian matching. Deterministic, antithetic, and quasi-Monte Carlo perturbation ensembles may also reduce sampling variance.

\begin{acknowledgments}
This work was supported by the National Research Foundation of Korea (NRF) grant funded by the Korea government (MSIT) (RS-2026-25588771), and by Korea University Grants (Nos. K2411041, K2414071, K2425851, K2514791).
\end{acknowledgments}

\section*{Data Availability Statement}
A reference implementation of the proposed loss functions, together with a minimal demonstration on the Lorenz~63 system and the stored evaluation array underlying the five-seed comparison reported here, is available at \url{https://github.com/diffcl/mcjac}. The remaining scripts used to produce the results reported in this paper are available from the corresponding author upon reasonable request.

\appendix

\section{Hyperparameter Settings}

To ensure the reproducibility of the reported results, we provide the detailed hyperparameter configurations for \texttt{naive}, \texttt{mc}, \texttt{mcjac}, \texttt{jac}, and \texttt{hes}. Tables~\tabref{l63-hparams} and~\tabref{l96-hparams} summarize the specific configurations used in the Lorenz~63 and the coupled Lorenz~96 examples.

\begin{table}[ht]
\centering
\caption{Hyperparameters for the Lorenz~63 experiments ($m=1,2$).}
\tablab{l63-hparams}
\begin{ruledtabular}
\begin{tabular}{l|cccccc}
Method & $\alpha_{\mathrm{mc}}$ & $\alpha_{\mathrm{mcjac}}$ & $\alpha_{\mathrm{jac}}$ & $\alpha_{\mathrm{hes}}$ & $\eta$ & learning rate \\
\colrule
\texttt{mc}    & $100$ & --    & --    & --    & $1.0$ & $1\times 10^{-3}$ \\
\texttt{mcjac} & $100$ & $100$ & --    & --    & $0.5$ & $1\times 10^{-3}$ \\
\texttt{naive} & --    & --    & --    & --    & --    & $1\times 10^{-3}$ \\
\texttt{jac}   & --    & --    & $1.0$ & --    & --    & $1\times 10^{-3}$ \\
\texttt{hes}   & --    & --    & $1.0$ & $0.1$ & --    & $1\times 10^{-3}$ \\
\end{tabular}
\end{ruledtabular}
\end{table}

\begin{table}[ht]
\centering
\caption{Hyperparameters for the coupled Lorenz 96 experiments ($m=2$).}
\label{tab:l96-hparams}
\begin{ruledtabular}
\begin{tabular}{l|cccccc}
Method & $\alpha_{\mathrm{mc}}$ & $\alpha_{\mathrm{mcjac}}$ & $\alpha_{\mathrm{jac}}$ & $\alpha_{\mathrm{hes}}$ & $\eta$ & learning rate \\
\colrule
\texttt{mc}    & $100$  & --  & --    & --  & $0.9$ & $5\times 10^{-3}$ \\
\texttt{mcjac} & $0.01$ & $1$ & --    & --  & $0.9$ & $5\times 10^{-3}$ \\
\texttt{naive} & --     & --  & --    & --  & --    & $1\times 10^{-2}$ \\
\texttt{jac}   & --     & --  & $0.01$ & -- & --    & $1\times 10^{-3}$ \\
\texttt{hes}   & --     & --  & $0.01$ & $1$ & --    & $5\times 10^{-3}$ \\
\end{tabular}
\end{ruledtabular}
\end{table}

\section{Finite-window behavior of the curvature length-scale statistic}
\seclab{lcurv}

\begin{figure}
    \centering  
    \begin{subfigure}[b]{0.9\linewidth}
    \centering
    \includegraphics[width=\linewidth]{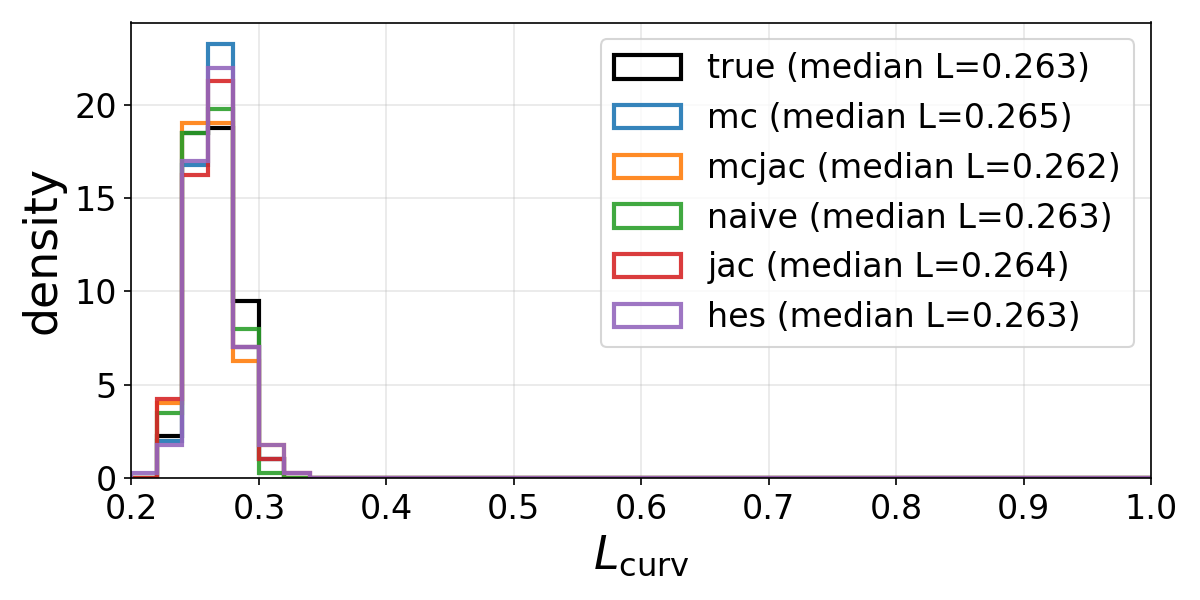}
        \caption{$F=10$}
        \figlab{l96-lcurv-F10}
    \end{subfigure}
    \hfill
    \begin{subfigure}[b]{0.9\linewidth}
    \centering
    \includegraphics[width=\linewidth]{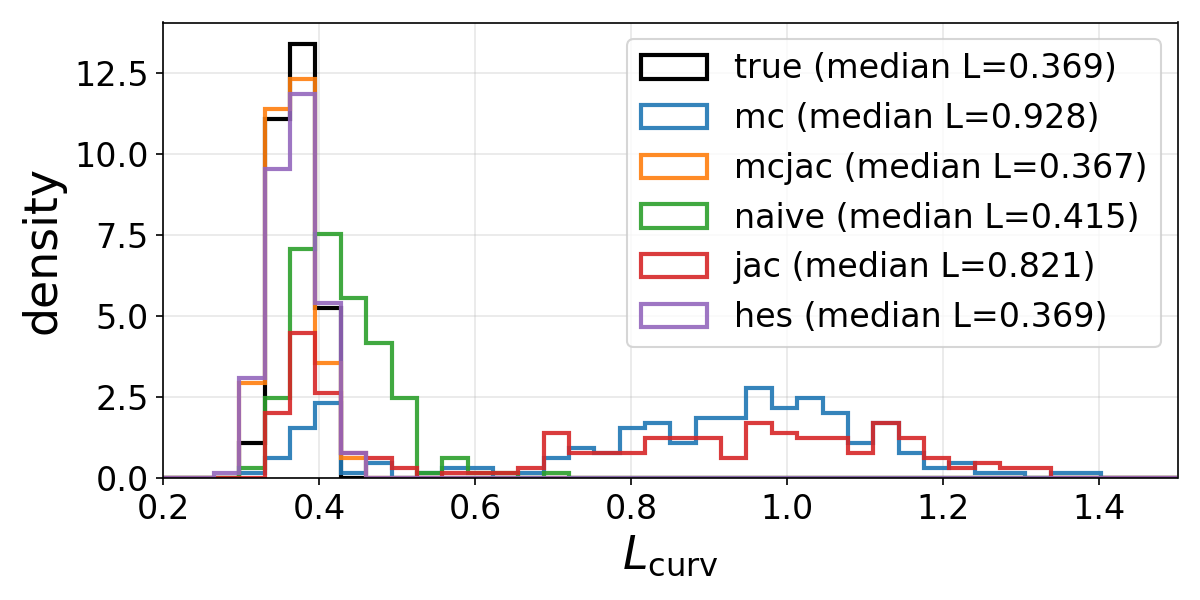}
        \caption{$F=20$}
        \figlab{l96-lcurv-F20}
    \end{subfigure}
    \caption{
    Finite-window distributions of the curvature length scale \(\Lcurv\) along the reference and learned trajectories.
    Each distribution is computed from 200 equally spaced states over \(t\in[10,500]\) after burn-in.
    (a) At \(F=10\), all methods give similar single-peaked distributions centered near \(\Lcurv\approx0.263\).
(b) At \(F=20\), \texttt{mcjac} and \texttt{hes} remain close to the reference distribution, whereas the \texttt{naive} distribution has a median of $0.415$. The \texttt{mc} and \texttt{jac} distributions are bimodal, with one mode near the reference value and a broader mode at larger
\(\Lcurv\).
}
    \figlab{l96-lcurv}
\end{figure}

Figure~\figref{l96-lcurv} shows the finite-window distributions of \(\Lcurv\) along the model-specific rollouts of the
Lorenz~96 system. 
Each learned field is evaluated on the states visited by its own rollouts. Thus, these distributions contain both field and state effects.
At $F=10$, all methods yield nearly identical distributions centered near $\Lcurv\approx0.263$ as shown in Figure~\figref{l96-lcurv-F10}. 
At $F=20$, \texttt{mcjac} and \texttt{hes} are centered near the true peak at $\Lcurv\approx0.369$, while the \texttt{naive} distribution has a median of $0.415$.
The \texttt{mc} and \texttt{jac} distributions each split into a main mode near the reference value and a
broader second lobe spanning approximately $0.7$--$1.4$, resulting in medians of $0.928$ and $0.821$, respectively.
  


\section{Seed Dependence of Lorenz~63 Statistical Biases}
\seclab{l63-seed-bias}

Since all \texttt{naive} rollouts leave the target attractor, we report the invariant-measure statistics of the other four methods evaluated over five random seeds each. 
The Lorenz~63 equations have a symmetry that exchanges the two lobes: if $(x(t),y(t),z(t))$ is a trajectory, then $(-x(t),-y(t),z(t))$ is also a
trajectory, which have equal mass under the physical invariant measure. We measure deviations from this symmetry by the signed lobe-mass imbalance.
\begin{equation*}
  \Delta p_{\mathrm{lobe}}^{(m,s)}
  :=\widehat\mu_{\widehat f_{m,s}}(x>0)
   -\widehat\mu_f^{(m,s)}(x>0)
\end{equation*}
where $\widehat\mu_{\widehat f_{m,s}}$ is the empirical measure generated by
method $m$ with seed $s$, and $\widehat\mu_f^{(m,s)}$ is the empirical measure
of the matching true rollouts.  Failed learned rollouts and their true
counterparts are excluded.  The symmetry predicts no preferred sign.

Table~\ref{tab:l63-lobe-bias} shows both signs for every method.  Across all $20$ combinations, the signed average is $-0.0024$ ($-0.24$ percentage points). Thus, individual results favor one lobe or the other, but the direction shows no systematic dependence on derivative supervision.
The mean row also reports the ensemble-averaged signed bias over the five seeds for each method.

\begin{table}[ht]
  \centering
  \caption{Signed lobe-mass bias $\Delta p_{\mathrm{lobe}}$ for each method and seed.}
  \label{tab:l63-lobe-bias}
  \begin{ruledtabular}
  \begin{tabular}{lcccc}
    Seed & \texttt{mc} & \texttt{mcjac} & \texttt{jac} & \texttt{hes} \\
    \colrule
    $32$ & $-0.0272$ & $+0.0143$ & $-0.0100$ & $+0.0011$ \\
    $33$ & $+0.0042$ & $-0.0234$ & $+0.0279$ & $-0.0390$ \\
    $34$ & $+0.0021$ & $+0.0138$ & $-0.0328$ & $+0.0161$ \\
    $35$ & $+0.0341$ & $+0.0078$ & $+0.0053$ & $-0.0243$ \\
    $36$ & $-0.0300$ & $-0.0041$ & $+0.0148$ & $+0.0020$ \\
    \colrule
    Mean
      & $-0.0034$ & $+0.0017$ & $+0.0010$ & $-0.0088$ \\
  \end{tabular}
  \end{ruledtabular}
\end{table}

\section{Reverse forcing experiment}
\seclab{l96-reverse}

To separate the effect of chaoticity and out-of-distribution distance, we repeated the Lorenz~96 experiment in reverse, training all methods
at $F=20$ and evaluating over $F\in\{10,12,14,16,18,20\}$.

For each forcing $F$, we compare the learned and exact Jacobian and Hessian of the local advection term \((X_{k+1}-X_{k-2})X_{k-1}\) on states sampled from the true trajectory. We use $20{,}000$ states for the Jacobian and the Hessian at a given $F$. We report the RMS errors of the learned Jacobian and Hessian, normalized by the RMS magnitude of the corresponding exact derivatives.

\begin{figure}[h!t!b!]
    \centering
    \begin{subfigure}[b]{0.48\linewidth}
        \centering
        \includegraphics[trim=0cm 0.0cm .0cm 0.0cm,clip=true,width=\linewidth]{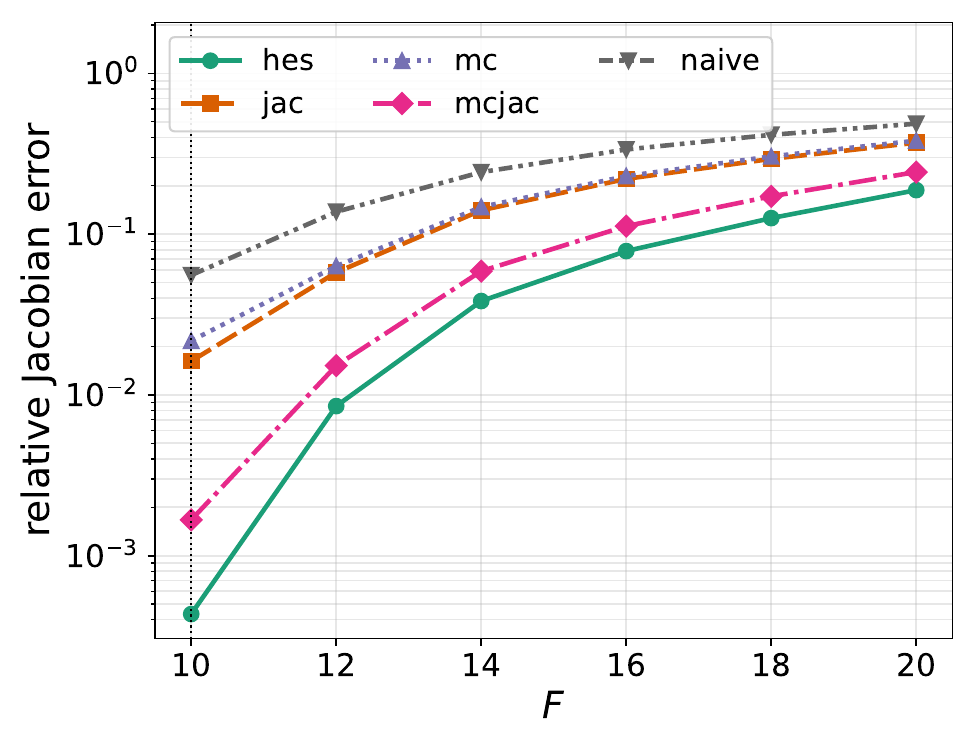}
        \caption{Jacobian error, trained at $F=10$}
        \figlab{f10-jacerr}
    \end{subfigure}
    \begin{subfigure}[b]{0.48\linewidth}
        \centering
        \includegraphics[trim=0cm .0cm .0cm .0cm,clip=true,width=\linewidth]{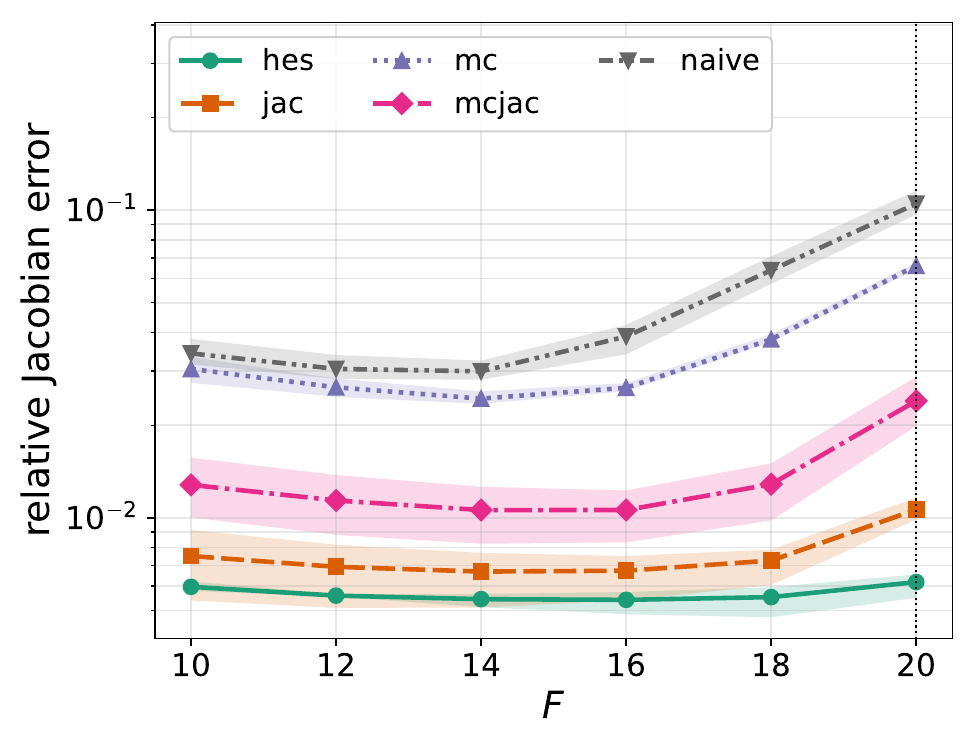}
        \caption{Jacobian error, trained at $F=20$}
        \figlab{f20-jacerr}
    \end{subfigure} \\
    \begin{subfigure}[b]{0.48\linewidth}
        \centering
        \includegraphics[trim=0cm 0.cm 0.cm .0cm,clip=true,width=\linewidth]{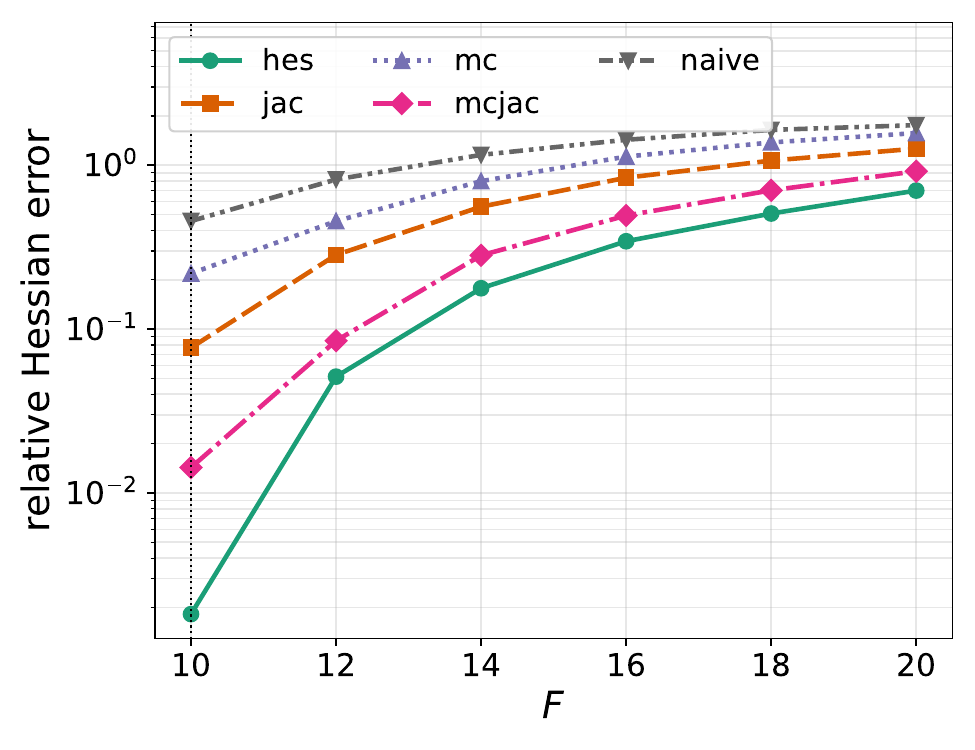}
        \caption{Hessian error, trained at $F=10$}
        \figlab{f10-heserr}
    \end{subfigure}
    \begin{subfigure}[b]{0.48\linewidth}
        \centering
        \includegraphics[trim=0cm 0.cm 0.cm 0.0cm,clip=true,width=\linewidth]{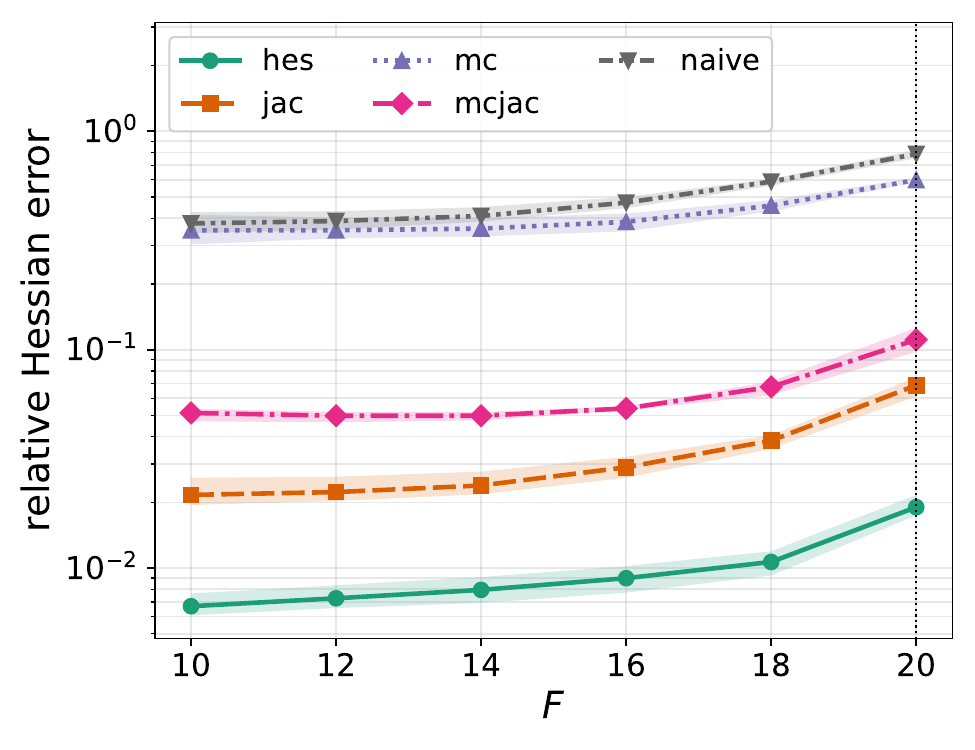}
        \caption{Hessian error, trained at $F=20$}
        \figlab{f20-heserr}
    \end{subfigure}
    \caption{Relative Jacobian and Hessian errors for models trained at $F=10$ (left) and $F=20$ (right). Dashed lines indicate the training forcing. Bands for the $F=20$ models show the min--max range over three training seeds, whereas the $F=10$ results are from a single training seed. In both training forcings, the derivative errors generally increase with $F$, rather than with the distance from the training forcing.
    }
    \figlab{l96-reverse-F}
\end{figure}

Figure~\figref{l96-reverse-F} shows that reversing the direction of
extrapolation does not reverse the direction of error growth. Whether trained at $F=10$ or $F=20$, the derivative errors generally increase toward larger $F$. Thus, the degradation follows the forcing level consistent with the larger Lyapunov exponent and stronger chaoticity at larger $F$. The reverse experiment does not, however, show that the relative advantage of second-order over first-order supervision
systematically increases with chaoticity.

\section{Field and Trajectory Effects on the Lyapunov Spectrum}
\seclab{l96-spectrum-decomposition}

The Lyapunov spectrum depends on both the local Jacobian field and the
trajectory along which those Jacobians are sampled. Because the temporal
ordering of the sampled Jacobians also matters, we write $\Lambda(g,X)$
for the Benettin spectrum obtained by evolving the tangent dynamics with
$\nabla g$ evaluated along the ordered trajectory $X$.
Let $X_f$ and $X_{\hat f}$ denote trajectories generated by the true and learned vector fields, respectively. $\Lambda(f,X_f)$ and $\Lambda(\hat f,X_{\hat f})$ are the Lyapunov spectra of the true and learned dynamical systems. $\Lambda(\hat f, X_f)$ evaluates the learned Jacobian along the true trajectory, whereas $\Lambda(f,X_{\hat f})$ evaluates the true Jacobian along the learned trajectory.

\subsection*{Lorenz 63: field--trajectory cross evaluation}
\label{app:l63-lyap-cross}

Table~\ref{tab:l63-lyap-cells-off-m1} reports the leading Lyapunov exponent for the Lorenz~63 models at $m=1$, using the same $1000$ off-attractor initial
conditions. All entries, including the true reference $\Lambda_1(f,X_f)=0.9046$, are medians over the $1000$ trajectories; the mean reference value 0.9041 is used in the main-text error metrics.

\begin{table}[t]
\centering
\caption{Cross evaluation of the leading Lyapunov exponent for Lorenz~63 with $m=1$. Entries are medians over the $1000$ evaluation trajectories. The true reference median is $\Lambda_1(f,X_f)=0.9046$.
For \texttt{naive}, all learned rollouts fail, so $\Lambda_1(\hat f,X_{\hat f})$ is undefined.}
\label{tab:l63-lyap-cells-off-m1}
\begin{tabular}{c|ccc}
\hline
method
& $\Lambda_1(\hat f,X_f)$
& $\Lambda_1(f,X_{\hat f})$
& $\Lambda_1(\hat f,X_{\hat f})$ \\
\hline
\texttt{mc}    & 0.9477 & 0.8123  & 0.8974 \\
\texttt{mcjac} & 0.8819 & 0.8991  & 0.9043 \\
\texttt{naive} & 0.9342 & 61.3286 & N/A \\
\texttt{jac}   & 0.8817 & 0.8914  & 0.9127 \\
\texttt{hes}   & 0.8242 & 0.9865  & 0.9067 \\
\hline
\end{tabular}
\end{table}

The comparison shows that the trajectory on which the learned Jacobian is sampled can change the resulting Lyapunov exponent. For \texttt{hes}, for example, the same learned Jacobian gives $\Lambda_1(\hat f,X_f)=0.8242$ along the true trajectory, but $\Lambda_1(\hat f,X_{\hat f})=0.9067$ along the learned trajectory, the latter being close to the reference value $0.9046$. A similar dependence is visible for \texttt{jac}, for which the corresponding values are $0.8817$ and $0.9127$. Thus, the Lyapunov exponent of the learned system is not determined by the accuracy of its Jacobian along the true trajectory alone. Even with the learned Jacobian field held fixed, changing from $X_f$ to $X_{\hat f}$ changes the sequence of states at which that Jacobian is sampled and can therefore change the resulting exponent.

\subsection*{Lorenz~96: full-spectrum trajectory effect at $F=20$}

We next examine whether the same trajectory effect appears across the leading
Lyapunov spectrum in the more strongly chaotic Lorenz~96 case at $F=20$.
For each learned vector field $\hat f$, we compute the top-20 spectrum using
the same learned Jacobian $\nabla\hat f$ along either its own rollout
$X_{\hat f}$ or the true trajectory $X_f$.
We define the top-20 spectral error as
$
\varepsilon_{\lambda}(Y)
=
\frac{1}{20}
\sum_{k=1}^{20}
\left(
\hat{\lambda}_k(Y)-\lambda_k^{\rm true}
\right)^2,
$
where $Y=X_{\hat f}$ or $X_f$ denotes the trajectory along which the learned
Jacobian is evaluated.

\begin{table}[h]
\centering
\caption{
Top-20 Lyapunov-spectrum MSE at $F=20$ using the learned Jacobian evaluated
along the learned rollout $X_{\hat f}$ or the true trajectory $X_f$.
The reduction is defined as
$1-\varepsilon_{\lambda}(X_f)/\varepsilon_{\lambda}(X_{\hat f})$.
}
\label{tab:l96-spectrum-decomposition}
\begin{tabular}{lccc}
\hline
method
& $\varepsilon_{\lambda}(X_{\hat f})$
& $\varepsilon_{\lambda}(X_f)$
& reduction \\
\hline
\texttt{mc}     & 1.935E+00 & 2.945E-02 & 98.5\% \\
\texttt{mcjac}  & 2.192E-01 & 1.774E-02 & 91.9\% \\
\texttt{naive}  & 2.294E+01 & 6.762E-02 & 99.7\% \\
\texttt{jac}    & 2.807E-01 & 3.194E-02 & 88.6\% \\
\texttt{hes}    & 2.217E-01 & 1.493E-02 & 93.3\% \\
\hline
\end{tabular}
\end{table}

Table~\ref{tab:l96-spectrum-decomposition} reports the corresponding spectral errors. Evaluating the learned Jacobian along the true trajectory reduces $\varepsilon_{\lambda}$ by $88.6$--$99.7\%$ across all five methods. When the same learned Jacobian is evaluated along the true trajectory instead of the learned rollout, the spectral error drops sharply for every method. This shows that a part of the Lyapunov-spectrum error comes from the learned trajectory visiting different states, not only from errors in the learned Jacobian.

\bibliography{references}

\end{document}